\newcommand{\beq}{\begin{equation}}
\newcommand{\eeq}{\end{equation}}
\newtheorem{theorem}{Theorem}[section]
\newtheorem{lemma}[theorem]{Lemma}
\newtheorem{coroll}[theorem]{Corollary}
\newtheorem{prop}[theorem]{Proposition}
\newtheorem{definition}[theorem]{Definition}
\newtheorem{remark}[theorem]{Remark}
\newcommand{\msc}[1]{\textbf{MSC2010:} #1.}
\newcommand{\keywords}[1]{\textbf{Key words:} #1.}
\newcommand{\ackn}[1]{\textbf{Acknowledgments:} #1.}
\def\RR{\mathbb R}
\def\NN{\mathbb N}
\def\EE{\mathsf E}
\def\PP{\mathsf P}
\def\CC{\mathcal C}
\def\DD{\mathcal D}
\def\eps{\varepsilon}
\def\cF{{\cal F}}
\def\supp{\text{supp}}
\def\theequation{\arabic{section}.\arabic{equation}}
\begin{document}
\title{\textbf{From optimal stopping boundaries \\ to Rost's reversed barriers \\ and the Skorokhod embedding}
}
\author{Tiziano De Angelis\thanks{School of Mathematics, University of Leeds, Woodhouse Lane LS2 9JT, Leeds, UK; \texttt{t.deangelis@leeds.ac.uk}}}
\maketitle
\begin{abstract}
We provide a new probabilistic proof of the connection between Rost's solution of the Skorokhod embedding problem and a suitable family of optimal stopping problems for Brownian motion with finite time-horizon. In particular we use stochastic calculus to show that the time reversal of the optimal stopping sets for such problems forms the so-called Rost's reversed barrier. 
\end{abstract}
\msc{60G40, 60J65, 60J55, 35R35}
\vspace{+8pt}

\noindent\keywords{optimal stopping, Skorokhod embedding, Rost's barriers, free-boundary problems}
\section{Introduction}

In the 60's Skorokhod \cite{Skor65} formulated the following problem: finding a stopping time $\tau$ of a standard Brownian motion $W$ such that $W_\tau$ is distributed according to a given probability law $\mu$. Many solutions to this problem have been found over the past 50 years via a number of different methods bridging analysis and probability (for a survey one may refer for example to \cite{Ob04}). In recent years the study of Skorokhod embedding was boosted by the discovery of its applications to model independent finance and a survey of these results can also be found in \cite{Hob11}.

In this work we focus on the so-called Rost's solution of the embedding (see \cite{Rost71}) and our main contribution is a new fully probabilistic proof of its connection to a problem of optimal stopping. One of the key differences in our approach compared to other existing proofs of this result (\cite{Cox-Wang13b} and \cite{McCon91}) is that we tackle the optimal stopping problem directly. Moreover, we rely only on stochastic calculus rather than using classical PDE methods, as in \cite{McCon91}, or viscosity theory, as in \cite{Cox-Wang13b}. 

Here we consider Rost's solutions expressed in terms of first hitting times of the time-space Brownian motion $(t,W_t)_{t\ge0}$ to a set usually called \emph{reversed barrier} \cite{Chacon85}. 
A purely probabilistic construction of Rost' s barrier relevant to the present work was recently found in \cite{Cox-Pe13} in a very general setting. Cox and Peskir \cite{Cox-Pe13} proved that given a probability measure $\mu$ one can find a unique couple of left continuous functions $b,c:[0,\infty)\to\RR$, with $b$ increasing and $c$ decreasing, such that $W$ stopped at the stopping time $\tau_{b,c}:=\inf\{t>0\,:\,W_t\le c(t)\:\text{or}\: W_t\ge b(t)\}$ is distributed according to $\mu$. The curves $b$ and $c$ are the boundaries of Rost's reversed barrier set and the stopping time $\tau_{b,c}$ fulfils a number of optimality properties, e.g.~it has the smallest truncated expectation among all stopping times realising the same embedding.

The optimal stopping problem object of our study is pointed out in \cite[Remark 17]{Cox-Pe13} and it was originally linked to Rost's embedding via PDE methods by McConnell \cite[Sec.~13]{McCon91}. Let $T>0$, let $\nu$ and $\mu$ be probability measures with cumulative distributions $F_\nu$ and $F_\mu$, denote $B$ a Brownian motion and consider the optimal stopping problem
\begin{align}\label{formula}
\sup_{0\le \tau\le T}\EE G(B_\tau)\quad\text{with}\quad G(x):= 2\int_0^{x}\big(F_\nu(z)-F_\mu(z)\big)dz,\quad x\in\RR
\end{align}
where $\tau$ is a stopping time of $B$. In this paper we prove that under mild assumptions on $\mu$ and $\nu$ (cf.~Section \ref{sec:sett}) it is optimal in \eqref{formula} to stop $(t,B_t)_{t\ge0}$ at the first exit time from an open set $\CC_T\subset [0,T]\times\RR$ (continuation set) which is bounded from above and from below by two right-continuous, monotone functions of time (one of these could be infinite). For each $T>0$ we denote $\DD_T:=\big\{[0,T]\times\RR\big\}\setminus\CC_T$ (stopping set) and we construct a set $\DD_\infty^-$ as the extension to $[0,\infty)$ of the time reversal of the family $\{\DD_T,\,T>0\}$. Then we show that such $\DD^-_\infty$ is a Rost's barrier in the sense that if $W^\nu$ is another Brownian motion (independent of $B$) with initial distribution $\nu$, the first hitting time $\sigma_*$ of $(t,W^\nu_t)$ to the set $\DD^-_\infty$ gives $W^\nu_{\sigma_*}\sim\mu$.

Our study was inspired by the work of McConnell \cite{McCon91}.
He studied a free-boundary problem, motivated by a version of the two sided Stefan problem, where certain boundary conditions were given in a generalised sense that involved the measures $\mu$ and $\nu$ used in \eqref{formula}. His results of existence uniqueness and regularity of the solution relied mostly upon PDE methods and some arguments from the theory of Markov processes. McConnell showed that the free-boundaries of his problem are the boundaries of a Rost's reversed barrier embedding the law $\mu$ (analogously to the curves $b$ and $c$ of \cite{Cox-Pe13}) and he provided some insights as to how these free-boundaries should also be optimal stopping boundaries for problem \eqref{formula}.

In the present paper we adopt a different point of view and begin by performing a probabilistic analysis of the optimal stopping problem \eqref{formula}. We characterise its optimal stopping boundaries and carry out a deep study of the regularity of its value function. It is important to notice that the second derivative of $G$ in \eqref{formula} only exists in the sense of measures (except under the restrictive assumption of $\mu$ and $\nu$ absolutely continuous with respect to the Lebesgue measure) and therefore our study of the optimal stopping problem naturally involves fine properties of Brownian motion's local time (via the occupation time formula). This feature seems {fairly new in the existing literature on finite time-horizon optimal stopping problems} and requires some new arguments for the study of \eqref{formula}. Our analysis of the regularity of the value function $V$ of \eqref{formula} shows that its time derivative $V_t$ is continuous on $[0,T)\times\RR$ (see Proposition \ref{prop:cont-Vt}) although its space derivative $V_x$ may not be. The proof of the continuity of $V_t$ is entirely probabilistic and to the best of our knowledge it represents a novelty in this literature and it is a result is of independent interest. 

Building on the results concerning problem \eqref{formula} we then provide a simple proof of the connection with Rost's embedding (see proof of Theorem \ref{thm:sk}). We would like to stress that our line of arguments is different to the one in \cite{McCon91} and it is only based on probability and stochastic calculus. Moreover our results extend those of \cite{McCon91} relative to the Skorokhod embedding by considering target measures $\mu$ that may have atoms (McConnell instead only looked at continuous measures).

It is remarkable that the connection between problem \eqref{formula} and Rost's embedding hinges on the probabilistic representation of the time derivative of the value function of \eqref{formula} (see Proposition \ref{prop:Ut}). It turns out that $V_t$ can be expressed in terms of the transition density of $(t,B_t)$ killed when leaving the continuation set $\CC_T$; then symmetry properties of the heat kernel allow us to rewrite $V_t$ as the transition density of $(t,W^\nu_t)$ killed when hitting the Rost's reversed barrier $\DD^-_\infty$ (see Lemma \ref{lemma:Hunt}. McConnell obtained the same result via potential theoretic and PDE arguments). The latter result and It\^o's formula are then used to complete the connection in Theorem \ref{thm:sk}.

One should notice that probabilistic connections between optimal stopping and Skorokhod embedding are not new in the literature and there are examples relative for instance to the Az\'ema-Yor's embedding \cite{AY79} (see \cite{Hob07}, \cite{Meil03}, \cite{Ob07} and \cite{Pe99} among others) and to the Vallois' embedding \cite{Val83} (see \cite{CHO08}). For recent developments of connections between control theory, transport theory and Skorokhod embedding one may refer to \cite{BCH16} and \cite{GH-LT14} among others. Our work instead is more closely related to the work of Cox and Wang \cite{Cox-Wang13b} (see also \cite{Cox-Wang13}) where they show that starting from the Rost's solution of the Skorokhod embedding one can provide the value function of an optimal stopping problem whose optimal stopping time is the hitting time of the Rost's barrier. Their result holds for martingales under suitable assumptions and clearly the optimal stopping problem that they find reduces to \eqref{formula} in the simpler case of Brownian motion. An important difference between this work and \cite{Cox-Wang13b} is that the latter starts from the Rost's barrier and constructs the optimal stopping problem, here instead we argue reverse. Methodologies are also very different as \cite{Cox-Wang13b} relies upon viscosity theory or weak solutions of variational inequalities. Results in \cite{Cox-Wang13} and \cite{Cox-Wang13b} have been recently expanded in \cite{GaObdR14} where viscosity theory and reflected FBSDEs have been used to establish the equivalence between solutions of certain obstacle problems and Root's (as well as Rost's) solutions of the Skorokhod embedding problem.

Finally we would like to mention that here we address the question posed in \cite[Rem.~4.4]{Cox-Wang13} of finding a probabilistic explanation for the correspondence between hitting times of Rost's barriers\footnote{To be precise the question in \cite{Cox-Wang13} was posed for Root's barrier (see \cite{Root}), but Root's and Rost's solutions are known to be closely related.} and suitable optimal stopping times. 

When this work was being completed we have learned of a work by Cox, Ob\l\'oj and Touzi \cite{COT15} where optimal stopping and a time reversal technique are also used to construct Root's barriers for the Skorokhod embedding problem with multiple marginals. In the latter paper the authors study directly an optimal stopping problem associated by \cite{Cox-Wang13} to Root's embedding. They prove that the corresponding stopping set is indeed the Root barrier for a suitable target law $\mu$ and, using an iterative scheme, they extend the result to embeddings with multiple marginals. This is done via a sequence of optimal stopping problems nested into one another. 
The approach in \cite{COT15} is probabilistic but the methods are different to the ones described here. Our results rely on $C^1$ regularity properties of the value function for \eqref{formula} whereas, in \cite{COT15}, only continuity of the value function is obtained. The connection between optimal stopping and Root's embedding found in \cite{COT15} uses an approximation scheme starting from finitely supported measures and it holds for target measures $\mu$ which are centered and with finite first moment. The latter assumptions are not needed here and we deal directly with a general $\mu$ without relying on approximations. Root and Rost embedding are somehow the time-reversal of one another and therefore our work and \cite{COT15} nicely complement each other. Although it should be possible to extend our results and methods to a multi-marginal case, this is not a trivial task and is left for future research.

The present paper is organised as follows. In Section \ref{sec:sett} we provide the setting and give the main results. In Section \ref{sec:optSt} we completely analyse the optimal stopping problem \eqref{formula} and its value function whereas Section \ref{sec:sk} is finally devoted to the proof of the link to Rost's embedding. A technical appendix collects some results and concludes the paper.

\section{Setting and main results}\label{sec:sett}

Let $(\Omega,\cF,\PP)$ be a probability space, $B:=(B_t)_{t\ge0}$ a one dimensional standard Brownian motion 
and denote $(\cF_t)_{t\ge0}$ the natural filtration of $B$ augmented with $\PP$-null sets. Throughout the paper we will equivalently use the notations $\EE f(B^x_t)$ and $\EE_x f(B_t)$, for $f:\RR\to \RR$ Borel-measurable, to refer to expectations under the initial condition $B_0=x$.

Let $\mu$ and $\nu$ be probability measures on $\RR$ with $\mu(\{\pm\infty\})=\nu(\{\pm\infty\})=0$, i.e.~with no atoms at infinity. We denote by $F_\mu(x):=\mu((-\infty,x])$ and $F_\nu(x):=\nu((-\infty,x])$ the (right-continuous) cumulative distributions functions of $\mu$ and $\nu$. Throughout the paper we will use the following notation:
\begin{align}
\label{not:a}&a_+:=\sup\{x\in\RR\,:\,x\in\supp\,\nu\}\quad\text{and}\quad a_-:=-\inf\{x\in\RR\,:\,x\in\supp\,\nu\}\\[+4pt]
\label{not:mu}&\mu_+:=\sup\{x\in\RR\,:\,x\in\supp\,\mu\}\quad\text{and}\quad\mu_-:=-\inf\{x\in\RR\,:\,x\in\supp\,\mu\}
\end{align}
and for the sake of simplicity but with no loss of generality we will assume $a_\pm\ge 0$. We also make the following assumptions which are standard in the context of Rost's solutions to the Skorokhod embedding problem (see for example \cite{Cox-Pe13}, and in particular Remark 2 on page 12 therein).
\begin{itemize}
\item[(D.1)] There exist numbers $\hat{b}_+\ge a_+$ and $\hat{b}_-\ge a_-$ such that $(-\hat{b}_-,\hat{b}_+)$ is the largest interval containing $(-a_-,a_+)$ with $\mu((-\hat{b}_-,\hat{b}_+))=0$;
\item[(D.2)] If $\hat b_+=a_+$ (resp.~$\hat b_-=a_-$) then $\mu(\{\hat{b}_+\})=0$ (resp.~$\mu(\{-\hat{b}_-\})=0$). 
\end{itemize}
It should be noted in particular that in the canonical example of $\nu(dx)=\delta_0(x)dx$ we have $a_+=a_-=0$ and the above conditions hold for any $\mu$ such that $\mu(\{0\})=0$.

Assumption (D.2) is made in order to avoid solutions of the Skorokhod embedding problem involving randomised stopping times. On the other hand Assumption (D.1) guarantees that for any $T>0$ the continuation set of problem \eqref{formula} is connected (see also the rigorous formulation \eqref{eq:V} below). Although (D.1) is not necessary for our main results to hold, the study of general non-connected continuation sets would require a case-by-case analysis. The latter would not affect the key principles presented in this work but it substantially increases the difficulty of exposition. In Remark \ref{rem:ncvx} below we provide an example of $\nu$ and $\mu$ which do not meet condition (D.1) but for which our method works in the same way.

The target measure $\mu$ could be entirely supported only on the positive or on the negative real half-line, i.e.~$\supp\{\mu\}\cap\RR_-=\emptyset$ or $\supp\{\mu\}\cap\RR_+=\emptyset$, respectively. In the former case $\hat{b}_-=+\infty$ and $\mu_-=-\hat{b}_+$, whereas in the latter $\hat{b}_+=+\infty$ and $\mu_+=-\hat{b}_-$. For the sake of generality in most of our proofs we will develop explicit arguments for the case of $\mu$ supported on portions of both positive and negative real axis and will explain how these carry over to the other simpler cases as needed.
\vspace{+5pt}

For $0<T<+\infty$ and $(t,x)\in[0,T]\times\RR$ we denote
\begin{align}\label{eq:G}
G(x):=&2\int_0^x\big(F_\nu(z)-F_\mu(z)\big)dz
\end{align}
and introduce the following optimal stopping problem
\begin{align}
\label{eq:V}V(t,x):=&\sup_{0\le \tau\le T-t}\EE_x G(B_\tau)
\end{align}
where the supremum is taken over all $(\cF_t)$-stopping times in $[0,T-t]$. As usual the continuation set $\CC_T$ and the stopping set $\DD_T$ of \eqref{eq:V} are given by
\begin{align}
\label{eq:CC}&\CC_T:=\{(t,x)\in[0,T]\times\RR\,:\,V(t,x)>G(x)\}\\[+3pt]
\label{eq:DD}&\DD_T:=\{(t,x)\in[0,T]\times\RR\,:\,V(t,x)=G(x)\}.
\end{align}
Moreover for $(t,x)\in[0,T]\times\RR$ the natural candidate to be an optimal stopping time is
\begin{align}\label{eq:optst}
\tau_*(t,x)=\inf\big\{s\ge 0\,:\,(t+s,x+B_s)\in \DD_T\big\}\wedge(T-t).
\end{align}
 
Throughout the paper we will often use the following notation: for a set $A\subset [0,T]\times \RR$ we denote $A\cap\{t<T\}:=\{(t,x)\in A:t<T\}$. Moreover we say that a function $t\mapsto f(t)$ is \emph{decreasing} if $f(t+\eps)\le f(t)$ for all $\eps>0$ and \emph{striclty decreasing} if the inequality is strict. Finally, we use $f(t+)$ and $f(t-)$ to denote the right and left limit, respectively, of $f$ at $t$.

The first result of the paper concerns the geometric characterisation of $\CC_T$ and $\DD_T$ and confirms that \eqref{eq:optst} is indeed optimal for problem \eqref{eq:V}.
\begin{theorem}\label{thm:OS}
The minimal optimal stopping time for \eqref{eq:V} is given by $\tau_*$ in \eqref{eq:optst}. Moreover, there exist two right-continuous, decreasing functions $b_+,b_-:[0,T]\to\RR_+\cup\{+\infty\}$, with 
$b_\pm(T-)=\hat{b}_\pm$, such that 
\begin{align}
\label{eq:CC-2}&\CC_T=\big\{(t,x)\in[0,T)\times\RR \,:\,x\in \big(-b_-(t),b_+(t)\big)\big\}\,,\\[+3pt]
\label{eq:DD-2}&\DD_T=\{[0,T]\times\RR\}\setminus \CC_T \,.
\end{align}
\end{theorem}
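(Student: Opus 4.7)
The approach I propose combines classical optimal stopping theory, the It\^o--Tanaka formula (since the second distributional derivative of $G$ is only the measure $2(\nu-\mu)$), and a time-monotonicity argument exploiting the shrinking horizon. First I would verify the basic regularity: since $G$ is continuous (in fact absolutely continuous) and $\sup_{0\le\tau\le T}\EE|G(B_\tau)|<\infty$, standard arguments yield continuity of $V$ on $[0,T]\times\RR$; consequently $\CC_T$ is open, $\DD_T$ is closed, $(V(t+s,B_s))_{s\ge 0}$ is a supermartingale, and the classical theory of optimal stopping delivers optimality of the first-entry time $\tau_*$. Since shrinking the horizon can only reduce the supremum in \eqref{eq:V}, the map $t\mapsto V(t,x)$ is non-increasing, so $\DD_T$ is downward-closed in $t$; with the definitions $b_+(t):=\inf\{x\ge 0:(t,x)\in\DD_T\}$ and $b_-(t):=\inf\{y\ge 0:(t,-y)\in\DD_T\}$ (with $\inf\emptyset=+\infty$), this monotonicity makes $b_\pm$ non-increasing.

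The heart of the geometric description rests on the It\^o--Tanaka identity
\[
\EE_x G(B_\tau)-G(x)=\EE_x\!\int_{\RR} L_\tau^a\,(\nu-\mu)(da),
\]
valid for any bounded stopping time $\tau$, where $L^a$ denotes Brownian local time. For $x\in(-\hat{b}_-,\hat{b}_+)$, (D.1) gives $\mu\equiv 0$ on the interval; taking $\tau$ to be the first exit from $(-\hat{b}_-,\hat{b}_+)$ capped at $T-t$, the identity produces a strictly positive expected local time against $\nu$ (recall $\supp\nu\subset[-a_-,a_+]\subset(-\hat{b}_-,\hat{b}_+)$), so $(t,x)\in\CC_T$ for $t<T$, and hence $b_\pm(t)\ge\hat{b}_\pm$ on $[0,T)$.

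Connectedness of the $x$-sections is then established as follows. Fix $(t,x_0)\in\DD_T$ with $x_0\ge\hat{b}_+$ and $x>x_0$, and let $\sigma:=\inf\{s\ge 0:B_s^x\le x_0\}\wedge(T-t)$. For any admissible $\tau$, splitting the expectation at $\sigma$ via the strong Markov property and combining with the inequality $V(t+\sigma,x_0)\le V(t,x_0)=G(x_0)$ (time monotonicity) yields
\[
\EE_x G(B_\tau^x)\le\EE_x G(B_{\tau\wedge\sigma}^x).
\]
On $[0,\tau\wedge\sigma]$ the Brownian motion remains in $[x_0,\infty)\subset[\hat{b}_+,\infty)$, so $L_{\tau\wedge\sigma}^a=0$ for $a<x_0$; moreover $\nu$ places no mass in $(x_0,\infty)$, and the possible boundary atom $\nu(\{x_0\})$ (which can occur only when $x_0=a_+=\hat{b}_+$) carries zero local time by Tanaka's formula applied to a Brownian motion started strictly above $x_0$, while $\mu\ge 0$. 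Applying the It\^o--Tanaka identity once more yields $\EE_x G(B_{\tau\wedge\sigma}^x)\le G(x)$, and taking the supremum over $\tau$ gives $V(t,x)=G(x)$. The mirror argument handles $x\le -b_-(t)$, delivering \eqref{eq:CC-2}--\eqref{eq:DD-2}.

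Right-continuity of $b_\pm$ follows from the closedness of $\DD_T$ together with the monotonicity of $b_\pm$ by a standard limit argument, and the terminal behavior $b_\pm(T-)=\hat{b}_\pm$ is obtained by one further short-time application of the It\^o--Tanaka identity: for $x$ slightly above $\hat{b}_+$, the maximality in (D.1) supplies enough $\mu$-mass to dominate the (exponentially small) $\nu$-contribution when $T-t$ is sufficiently small, forcing $V(t,x)=G(x)$. I expect the connectedness step to be the main obstacle: the simultaneous use of the strong Markov property, of time-monotonicity, and of the It\^o--Tanaka identity must be carried out with care, and particular attention is needed at the borderline case $a_+=\hat{b}_+$ in which both $\nu$ and $\mu$ may a priori charge the transition point $\hat{b}_+$ (a situation neutralised by (D.2)).
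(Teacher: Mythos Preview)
Your proposal is correct and follows the same overall architecture as the paper: continuity of $V$ via Lipschitz estimates, time-monotonicity from the shrinking horizon, the It\^o--Tanaka identity to show $[0,T)\times(-\hat b_-,\hat b_+)\subseteq\CC_T$, and right-continuity of $b_\pm$ from closedness of $\DD_T$. Two steps differ in execution, and the comparison is worth recording.

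\emph{Connectedness of the $x$-sections.} The paper's argument is shorter than yours: once $(t,x_0)\in\DD_T$ with $x_0\ge a_+$, for $y>x_0$ one works directly with the \emph{optimal} $\tau_*=\tau_*(t,y)$ and observes that $[t,T]\times\{x_0\}\subset\DD_T$ forces $\tau_*\le\inf\{s:B^y_s=x_0\}$, hence $L^z_{\tau_*}=0$ for every $z\in\supp\nu$, and one line of It\^o--Tanaka yields $V(t,y)\le G(y)$. Your route via an arbitrary $\tau$, the Snell-envelope bound $\EE_x[G(B_\tau)\mid\cF_\sigma]\le V(t+\sigma,B_\sigma)$, and the Tanaka computation $L^{x_0}_{\tau\wedge\sigma}=0$ is entirely valid (your handling of a possible atom $\nu(\{x_0\})$ via the one-sided hitting-time argument is correct), just longer. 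The paper's version buys brevity; yours avoids invoking optimality of $\tau_*$ inside the geometric argument.

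\emph{Terminal limits $b_\pm(T-)=\hat b_\pm$.} Here the methods are genuinely different. The paper argues by contradiction and a PDE/test-function computation: if $b_+(T-)>\hat b_+$, integrate $V_t=-\tfrac12 V_{xx}$ against $\psi\in C^\infty_c(\hat b_+,b_+(T-))$, let $s\uparrow T$, and obtain $\int\psi\,d\mu\le 0$, contradicting the maximality in (D.1). Your probabilistic short-time estimate (for $x$ just above $\hat b_+$ and $T-t$ small, the $\nu$-contribution is dominated by the available $\mu$-mass near $\hat b_+$) is essentially the same local-time computation the paper uses in its Step~1 to show $\DD_T\cap\{t<T\}\neq\emptyset$, repurposed at the endpoint. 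It works, and it also implicitly supplies the non-emptiness of $\DD_T\cap\{t<T\}$ that you do not prove separately; but you should be aware that when $\mu(\{\hat b_+\})>0$ the estimate needs to be carried out \emph{at} $x=\hat b_+$ rather than strictly above it, which requires a slightly more delicate balance of the two local-time terms (the paper handles this case separately in its Lemma~3.3).
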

Theorem \ref{thm:OS} will be proven in Section \ref{sec:optSt}, where a deeper analysis of the boundaries' regularity will be carried out. A number of fundamental regularity results for the value function $V$ will also be provided (in particular continuity of $V_t$ in $[0,T)\times\RR$) and these constitute the key ingredients needed to show the connection to Rost's barrier and Skorokhod embedding. In order to present such connection we must introduce some notation.
\vspace{+5pt}

By arbitrariness of $T>0$, problem \eqref{eq:V} may be solved for any time horizon. Hence for each $T$ we obtain a characterisation of the corresponding value function, denoted now $V^T$, and of the related optimal boundaries, denoted now $b^T_\pm$. It is straightforward to observe that for $T_2>T_1$ one has $V^{T_2}(t+T_2-T_1,x)=V^{T_1}(t,x)$ for all $(t,x)\in[0,T_1]\times\RR$ and therefore, thanks to Theorem \ref{thm:OS}, $b^{T_2}_{\pm}(t+T_2-T_1)=b^{T_1}_\pm(t)$ for $t\in[0,T_1)$ since $G$ is independent of time. We can now consider a time reversed version of our continuation set \eqref{eq:CC-2} and extend it to the time interval $[0,\infty)$.  In order to do so we set $T_0=0$, $T_n=n$, $n\ge 1$, $n\in\NN$ and denote $s^{n}_\pm(t):=b^{T_n}_\pm(T_n-t)$ for $t\in(0,T_n]$. Note that, as already observed, for $m>n$ and $t\in(0,T_n]$ it holds $s^{m}_\pm(t)=s^n_\pm(t)$.
\begin{definition}\label{def:spm}
Let $s_\pm:[0,\infty)\to\RR_+\cup\{+\infty\}$ be the left-continuous increasing functions defined by taking $s_\pm(0):=\hat{b}_\pm$ and
\begin{align*}
s_\pm(t):=\sum^\infty_{j=0}s^{j+2}_\pm(t)\mathds{1}_{(T_j,T_{j+1}]}(t),\quad t\in(0,\infty).
\end{align*}
\end{definition}
For any $T>0$ the curves $s_+$ and $-s_-$ restricted to $(0,T]$ constitute the upper and lower boundaries, respectively, of the continuation set $\CC_T$ after a time-reversal.
The next theorem establishes that indeed $s_+$ and $-s_-$ provide the Rost's reversed barrier which embeds $\mu$. Its proof is given in Section \ref{sec:sk}.
\begin{theorem}\label{thm:sk}
Let $W^\nu:=(W^\nu_t)_{t\ge0}$ be a standard Brownian motion with initial distribution $\nu$ and define
\begin{align}\label{eq:sigma*}
\sigma_*:=\inf\big\{t>0\,:\,W^\nu_t\notin \big(-s_-(t),s_+(t)\big)\big\}.
\end{align}
Then
it holds
\begin{align}\label{eq:sk}
\EE f(W^\nu_{\sigma_*})\mathds{1}_{\{\sigma_*<+\infty\}}=\int_\RR f(y)\mu(dy),\quad\text{for all $f\in C_b(\RR)$.}
\end{align}
\end{theorem}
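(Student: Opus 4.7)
The plan is to pair It\^o's formula with the density interpretation of $V^T_t$ from Lemma \ref{lemma:Hunt}. Fix $T>0$ and a test function $f\in C^2_c(\RR)$. Applying It\^o's formula to $f(W^\nu_\cdot)$ on $[0,\sigma_*\wedge T]$ and taking expectations yields
\begin{align*}
\EE f(W^\nu_{\sigma_*\wedge T}) - \int f\,d\nu = \tfrac12 \int_0^T\!\!\int_\RR f''(x)\,\rho^\CC_s(x)\,dx\,ds,
\end{align*}
where $\rho^\CC_s(x)$ is the sub-probability density of $W^\nu_s$ on $\{\sigma_*>s\}$. The key input is Lemma \ref{lemma:Hunt}, which identifies $\rho^\CC_s(x) = -V^T_t(T-s,x)$; integrating in $s$ telescopes the time integral to $\int_0^T \rho^\CC_s(x)\,ds = V^T(0,x)-G(x)$.

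The next move is to integrate by parts twice and rewrite $\tfrac12 \int f''(x)[V^T(0,x)-G(x)]\,dx$ distributionally as $\tfrac12\langle f,\,[V^T(0,\cdot)-G]''\rangle$. Write $I_T := (-b^T_-(0),b^T_+(0))$ for the time-$0$ section of $\CC_T$. Inside $I_T$ the stopping PDE $V^T_t+\tfrac12 V^T_{xx}=0$ gives $V^T_{xx}(0,\cdot)=-2V^T_t(0,\cdot)$, while outside $I_T$ one has $V^T(0,\cdot)=G$; combined with $G''=2(\nu-\mu)$ as a signed measure, this yields
\begin{align*}
[V^T(0,\cdot)-G]'' = \bigl(-2V^T_t(0,\cdot) - 2(\nu-\mu)\bigr)\mathds{1}_{I_T}
\end{align*}
distributionally. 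Using $\supp(\nu)\subset I_T$ and reapplying Lemma \ref{lemma:Hunt} at $s=T$ to recognise $-V^T_t(0,x)=\rho^\CC_T(x)$, substitution and cancellation of the term corresponding to $\{\sigma_*>T\}$ on both sides leaves
\begin{align*}
\EE\bigl[f(W^\nu_{\sigma_*})\,\mathds{1}_{\sigma_*\le T}\bigr] = \int_{I_T} f\,d\mu.
\end{align*}

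Finally I would send $T\to\infty$. The boundaries $b^T_\pm(0)$ are monotone in $T$ by comparison of value functions, and their limits must exhaust $\supp(\mu)$; monotone convergence then gives $\int_{I_T} f\,d\mu \to \int f\,d\mu$ while $\{\sigma_*\le T\}\nearrow\{\sigma_*<\infty\}$. This proves \eqref{eq:sk} for $f\in C^2_c(\RR)$, and a standard approximation argument extends it to every $f\in C_b(\RR)$. The main obstacle I anticipate sits in the second paragraph: since $V^T_x$ may fail to be continuous across the free boundary $\{\pm b^T_\pm(0)\}$ (as the introduction flags), $V^T_{xx}(0,\cdot)$ could a priori carry Dirac masses there that would invalidate the distributional identity above. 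Ruling this out — or else showing that any singular boundary contributions cancel — is precisely where the fine regularity of $V^T$ proved in Section \ref{sec:optSt}, and most crucially the continuity of $V^T_t$ established in Proposition \ref{prop:cont-Vt}, must be invoked.
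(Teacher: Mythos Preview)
Your overall strategy coincides with the paper's: It\^o's formula, identification of the killed sub-density with $-V^T_t(T-s,\cdot)$, Fubini to collapse the time integral to $U^T(0,\cdot)=V^T(0,\cdot)-G$, integration by parts, and passage $T\to\infty$. Two small corrections: the density identification you need is Proposition \ref{prop:Ut}, not Lemma \ref{lemma:Hunt} (the latter is only the symmetry $p^\CC=p^\CC_-$ used as an ingredient in the former); and the claim that the limits of $b^T_\pm(0)$ exhaust $\supp\mu$ is the nontrivial Proposition \ref{prop:s-lim} together with Corollary \ref{cor:suppmu}, which you should cite rather than assert.

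The substantive gap is exactly where you flag it, but the resolution is not the one you propose. Continuity of $V^T_t$ (Proposition \ref{prop:cont-Vt}) does \emph{not} force continuity of $V^T_x$ across $\pm b^T_\pm(0)$, and by Proposition \ref{prop:sm-fit} the boundary jumps $\Delta^\pm_T:=U^T_x(0,\pm b^T_\pm(0)\mp)$ are in general nonzero for fixed $T$ (they are controlled by the jump of $G'$, hence by the atom of $\mu$ at the boundary point). The paper does not make these Dirac contributions disappear at fixed $T$: it carries them through the integration by parts to obtain
\[
\EE\big[\mathds{1}_{\{\sigma_*<T\}}f(W^\nu_{\sigma_*})\big]=\int_{I_T}f\,d\mu-\tfrac12\big[f(s_+(T))\Delta^+_T\mathds{1}_{\{s_+(T)<\infty\}}-f(-s_-(T))\Delta^-_T\mathds{1}_{\{s_-(T)<\infty\}}\big],
\]
with $I_T$ the \emph{open} interval, and only then argues that the boundary terms vanish as $T\to\infty$. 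When $s_\pm(T)\to\infty$ this follows from Corollary \ref{cor:smf2}; when $s_\pm(T)$ converges to a finite endpoint $\mu_\pm$ with $\mu(\{\mu_\pm\})=0$ it follows from the bounds \eqref{Vx-bound00}--\eqref{Vx-bound01}. But when $\mu$ has an atom at a finite endpoint, neither $\Delta^\pm_T\to 0$ nor $\int_{I_T}f\,d\mu\to\int f\,d\mu$ is available directly, and the paper resorts to approximating $\mu$ by a sequence $\mu_k$ continuous at that endpoint (see \eqref{muk0}), together with a separate argument that the corresponding hitting times $\sigma^{(k)}_*$ converge to $\sigma_*$. Your sketch, as written, would fail precisely in this atomic-endpoint case.
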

\begin{remark}\label{rem:uniq}
It was shown in \cite[Thm.~10]{Cox-Pe13} that there can only exist one couple of left-continuous increasing functions $s_+$ and $s_-$ such that our Theorem \ref{thm:sk} holds. Therefore our boundaries coincide with those obtained in \cite{Cox-Pe13} via a constructive method. As a consequence $s_+$ and $s_-$ fulfil the optimality properties described by Cox and Peskir in Section 5 of their paper, i.e.,~$\sigma_*$ has minimal truncated expectation amongst all stopping times embedding $\mu$.
\end{remark}
\begin{remark}\label{rem:inteq}
Under the additional assumption that $\mu$ is continuous we were able to prove in \cite{DeA-sk2} that $s_\pm$ uniquely solve a system of coupled integral equations of Volterra type and can therefore be evaluated numerically.
\end{remark}

\section{Solution of the optimal stopping problem}\label{sec:optSt}

In this section we provide a proof of Theorem \ref{thm:OS} and extend the characterisation of the optimal boundaries $b_+$ and $b_-$ in several directions. Here we also provide a thorough analysis of the regularity of $V$ in $[0,T]\times\RR$ and especially across the two boundaries. Such study is instrumental to the proofs of the next section but it contains numerous results on optimal stopping which are of independent interest. 
\vspace{+5pt}

We begin by showing finiteness, continuity and time monotonicity of $V$.
\begin{prop}\label{prop:V}
For all $(t,x)\in[0,T]\times \RR$ it holds $|V(t,x)|<+\infty$. The map $t\mapsto V(t,x)$ is decreasing for all $x\in\RR$ and $V\in C([0,T]\times\RR)$. Moreover $x\mapsto V(t,x)$ is Lipschitz continuous with constant $L_G$ independent of $t$ and $T$.
\end{prop}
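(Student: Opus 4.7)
The plan is to exploit the Lipschitz character of the gain function $G$. Since $G'(x)=2(F_\nu(x)-F_\mu(x))$ exists a.e.\ and is bounded in absolute value by $L_G:=2$, the function $G$ is globally Lipschitz with constant $L_G$ and satisfies $|G(x)|\le L_G|x|$ (using $G(0)=0$). Finiteness of $V$ then follows because for any admissible $\tau$ we have
\[
|\EE_x G(B_\tau)|\le L_G\,\EE_x|B_\tau|\le L_G\bigl(|x|+\EE\sup_{0\le s\le T}|B_s|\bigr),
\]
and the right-hand side is bounded by a constant depending only on $|x|$ and $T$.

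Monotonicity in time is immediate: for $t_1<t_2$ every $\tau\in[0,T-t_2]$ is also admissible for $V(t_1,x)$, so the supremum over the larger family is at least as big, giving $V(t_1,x)\ge V(t_2,x)$. Lipschitz continuity in $x$ follows from a coupling argument. Fixing a Brownian motion $B$ starting from $0$, for any stopping time $\tau\le T-t$ one has $B^x_\tau=x+B_\tau$ and $B^y_\tau=y+B_\tau$, whence
\[
|\EE G(x+B_\tau)-\EE G(y+B_\tau)|\le L_G|x-y|.
\]
Taking the supremum over $\tau$ on both sides of the obvious two-sided inequality yields $|V(t,x)-V(t,y)|\le L_G|x-y|$, uniformly in $t$ and $T$.

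For joint continuity it suffices, given the uniform Lipschitz bound in $x$, to prove continuity in $t$ uniformly on compacts in $x$. Given $t_1<t_2$ and $\varepsilon>0$, I would pick $\tau^\varepsilon\le T-t_1$ that is $\varepsilon$-optimal for $V(t_1,x)$ and set $\tau':=\tau^\varepsilon\wedge(T-t_2)$, which is admissible for $V(t_2,x)$. Using the Lipschitz property of $G$ once more,
\[
0\le V(t_1,x)-V(t_2,x)\le \EE_x|G(B_{\tau^\varepsilon})-G(B_{\tau'})|+\varepsilon\le L_G\,\EE|B_{\tau^\varepsilon}-B_{\tau'}|+\varepsilon.
\]
The key estimate is then $\EE|B_{\tau^\varepsilon}-B_{\tau'}|\le\sqrt{t_2-t_1}$, obtained by Cauchy--Schwarz together with the martingale identity
\[
\EE|B_{\tau^\varepsilon}-B_{\tau'}|^2=\EE(\tau^\varepsilon-\tau')\le t_2-t_1,
\]
which holds by optional sampling since $B^2_t-t$ is a martingale and $0\le \tau^\varepsilon-\tau'\le (T-t_1)-(T-t_2)=t_2-t_1$. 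Letting $\varepsilon\to 0$ gives $|V(t_1,x)-V(t_2,x)|\le L_G\sqrt{t_2-t_1}$, and combining with the spatial Lipschitz bound yields $V\in C([0,T]\times\RR)$.

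The only step that needs care is the time-continuity estimate, because one has to truncate the near-optimal stopping time for $V(t_1,\cdot)$ to make it admissible for $V(t_2,\cdot)$ and control the resulting increment of $B$; the rest of the proposition reduces to the Lipschitz regularity of $G$, which is the reason all estimates are independent of $T$.
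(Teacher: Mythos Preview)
Your proof is correct and follows essentially the same route as the paper: finiteness from sublinear growth of $G$, time monotonicity by inclusion of admissible stopping times, spatial Lipschitz continuity via coupling and the Lipschitz bound on $G$, and time continuity by truncating a near-optimal stopping time for $V(t_1,\cdot)$ at level $T-t_2$. The only minor variation is in the last step: the paper bounds $\EE_x|B_{\tau^\varepsilon}-B_{\tau'}|$ by $\EE\big[\sup_{T-t_2\le s\le T-t_1}|B_s-B_{T-t_2}|\big]$ and invokes dominated convergence, whereas you use Cauchy--Schwarz together with the $L^2$ isometry $\EE(B_{\tau^\varepsilon}-B_{\tau'})^2=\EE(\tau^\varepsilon-\tau')\le t_2-t_1$, which has the advantage of delivering an explicit H\"older-$\tfrac12$ modulus in time.
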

\begin{proof}
Finiteness is a simple consequence of sublinear growth of $G$ at infinity and of $T<+\infty$. Since $G$ is independent of time then $t\mapsto V(t,x)$ is decreasing on $[0,T]$ for each $x\in\RR$ by simple comparison. To show that $V\in C([0,T]\times\RR)$ we take $0\le t_1<t_2\le T$ and $x\in\RR$, then
\begin{align*}
0\le& V(t_1,x)-V(t_2,x)\le \sup_{0\le \tau\le T-t_1}\EE_{x} \big[\big(G(B_\tau)-G(B_{T-t_2})\big)\mathds{1}_{\{\tau\ge T-t_2\}}\big]\\
\le& L_G\EE_{x} \big[\sup_{T-t_2\le s\le T-t_1}\big|B_s-B_{T-t_2}\big|\big]\to0\quad\text{as $t_2-t_1\to 0$}\nonumber
\end{align*}
where we have used that $x\mapsto G(x)$ is Lipschitz on $\RR$ with constant $L_G\in(0,4]$ and the limit follows by dominated convergence. Now we take $x,y\in\RR$ and $t\in[0,T]$, then
\begin{align*}
\big|V(t,x)-V(t,y)\big|\le& L_G\EE \big[\sup_{0\le s\le T-t}\big|B^x_s-B^y_s\big|\big]=  L_G|x-y|.
\end{align*}
Since $V(\,\cdot\,,x)$ is continuous on $[0,T]$ for each $x\in\RR$ and $V(t,\,\cdot\,)$ is continuous on $\RR$ uniformly with respect to $t\in[0,T]$ continuity of $(t,x)\mapsto V(t,x)$ follows.
\end{proof}
\vspace{+5pt}

The above result implies that $\CC_T$ is open and $\DD_T$ is closed (see \eqref{eq:CC} and \eqref{eq:DD}) and standard theory of optimal stopping guarantees that \eqref{eq:optst} is the smallest optimal stopping time for problem \eqref{eq:V}. Moreover from standard arguments, which we collect in Appendix for completeness, $V\in C^{1,2}$ in $\CC_T$ and it solves the following obstacle problem
\begin{align}
\label{eq:pde01}&\big(V_t+\tfrac{1}{2}V_{xx})(t,x)=0, &\text{for $(t,x)\in\CC_T$}\\[+3pt]
\label{eq:pde02}&V(t,x)=G(x), &\text{for $(t,x)\in\DD_T$}\\[+3pt]
\label{eq:pde03}&V(t,x)\ge G(x), &\text{for $(t,x)\in[0,T]\times\RR$.}
\end{align}

We now characterise $\CC_T$ and prove an extended version of Theorem \ref{thm:OS}.
\begin{theorem}\label{thm:OS-b}
All the statements in Theorem \ref{thm:OS} hold and moreover one has
\begin{itemize}
\item[  i)] if $\supp\{\mu\}\subseteq \RR_+$ then $b_-\equiv\infty$ and there exists $t_0\in[0,T)$ such that $b_+(t)<\infty$ for $t\in(t_0,T]$,
\item[ ii)] if $\supp\{\mu\}\subseteq \RR_-$ then $b_+\equiv\infty$ and there exists $t_0\in[0,T)$ such that $b_-(t)<\infty$ for $t\in(t_0,T]$,
\item[iii)] if $\supp\{\mu\}\cap \RR_+\neq\emptyset$ and $\supp\{\mu\}\cap \RR_-\neq\emptyset$ then there exists $t_0\in[0,T)$ such that $b_\pm(t)<\infty$ for $t\in(t_0,T]$,
\item[ iv)] if $\nu(\{a_+\})>0$ (resp.~$\nu(\{-a_-\})>0$) then $b_+(t)>a_+$ for $t\in[0,T)$ (resp.~$b_-(t)>a_-$).
\end{itemize}
Finally, letting $\Delta b_\pm(t):=b_\pm(t)-b_\pm(t-)\le 0$, for any $t\in[0,T]$ such that $b_\pm(t)<+\infty$ it also holds
\begin{align}
\label{eq;jumpb}&\Delta b_+(t)<0\quad\Rightarrow\quad \mu\big(\big(b_+(t),b_+(t-)\big)\big)=0\\
\label{eq;jumpb2}&\Delta b_-(t)<0\quad\Rightarrow\quad \mu\big(\big(-b_-(t-),-b_-(t)\big)\big)=0.
\end{align}
\end{theorem}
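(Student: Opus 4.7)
\emph{Proof plan.} The cornerstone of the proof is an It\^o--Tanaka representation of the payoff. Since $G$ is absolutely continuous with distributional second derivative $G''(dx) = 2(\nu - \mu)(dx)$, the occupation time formula delivers, for every stopping time $\tau$,
\begin{equation*}
\EE_x G(B_\tau) - G(x) \;=\; \EE_x \int_\RR L^y_\tau \,(\nu - \mu)(dy),
\end{equation*}
so that $(t,x) \in \CC_T$ if and only if the supremum over $\tau \le T-t$ of the right-hand side is strictly positive. Every claim of the theorem then reduces to controlling where Brownian local time can be placed against $\nu-\mu$, a signed measure which assumption (D.1) has arranged to vanish on $(-\hat b_-, \hat b_+)$.

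My first step would be to use this representation, together with (D.1), to establish horizontal convexity of each time slice of $\CC_T$ by a reflection/coupling comparison on the local-time integral; this defines $b_+(t) := \sup\{x : (t,x) \in \CC_T\}$ and $-b_-(t) := \inf\{x : (t,x) \in \CC_T\}$. Time-monotonicity of $V$ from Proposition \ref{prop:V} yields $b_\pm$ decreasing, while right-continuity follows from the openness of $\CC_T$ combined with monotonicity: if $(t,x) \in \CC_T$ then a whole neighbourhood lies in $\CC_T$, forcing $b_+(s) \ge x$ for $s$ near $t$. The terminal identity $b_\pm(T-) = \hat b_\pm$ would be obtained from a small-time expansion of the local-time integral near the edges of the $\mu$-void.

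Cases (i)--(iv) are settled by checking when the supremum in the representation is strictly positive. In (i), $\supp\,\mu \subseteq \RR_+$ forces $\hat b_- = +\infty$; for any $t < T$ and any $x < 0$ I would pick $\tau = T-t$ and use that $B$ visits $\supp\,\nu$ with positive probability while never meeting $\mu$-mass on $\RR_-$, giving $v(t,x) > 0$, so $b_- \equiv +\infty$. Finiteness of $b_+$ near $T$ in (i)--(iii) follows from the opposite direction: for $x$ well to the right of $\supp\,\mu$ and $\tau$ short, $L^y_\tau$ concentrates in a region where $\mu$ strictly dominates $\nu$, forcing the integrand to be $\le 0$. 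Case (iv) is the atom analogue: if $\nu(\{a_+\}) > 0$, then for any $x$ near $a_+$ and any $\tau > 0$ the contribution $\nu(\{a_+\})\,\EE_x L^{a_+}_\tau > 0$ cannot be offset by $\mu$ since $a_+$ lies in the $\mu$-void (by (D.1) and (D.2)); hence $(t,a_+) \in \CC_T$ for every $t < T$, so $b_+(t) > a_+$.

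The hard part is the jump relations \eqref{eq;jumpb}--\eqref{eq;jumpb2}. Fix $t$ with $\Delta b_+(t) < 0$ and set $v := V - G$, which by \eqref{eq:pde01} satisfies $v_t + \tfrac12 v_{xx} = \mu - \nu$ in $\CC_T$ in the distributional sense. On the interval $I := (b_+(t), b_+(t-))$ the measure $\nu$ vanishes, since $b_+(t) \ge \hat b_+ \ge a_+$, so the source reduces to $\mu$ there. Points $(s,x)$ with $x \in I$ and $s \uparrow t$ lie in $\CC_T$ by the decrease and the definition of the left limit of $b_+$, whereas $v(t,x) = 0$ on $I$. Assuming for contradiction that $\mu(I) > 0$, I would test the equation for $v$ against a non-negative $\varphi \in C_c^\infty(I)$ on a cylinder $[s,t] \times I$: the spatial-boundary terms vanish by the support of $\varphi$, the terminal term at $t$ vanishes because $v(t,\cdot) \equiv 0$ on $I$, and the initial term at $s$ tends to zero as $s \uparrow t$ by continuity of $V$; the surviving identity reads $\int_s^t\!\!\int_I \varphi\, d\mu\, ds' \to 0$, which forces $\int_I \varphi\, d\mu = 0$ and hence $\mu(I) = 0$, the desired contradiction. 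The symmetric argument handles $b_-$. The subtlety is that $V_x$ need not be continuous across the jumping boundary and the source is only a measure, so the PDE manipulations must be made rigorous either via mollification or via a purely probabilistic local-time argument using the integral representation of $v$; this is where the bulk of the technical work would lie.
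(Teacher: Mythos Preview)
Your overall architecture matches the paper's: the It\^o--Tanaka representation is indeed the engine behind every step, and your treatment of the monotonicity, right-continuity, and cases (i)--(iv) is close to what the paper does. Two points are worth flagging.

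\emph{Minor.} In (i) your choice $\tau = T-t$ does not work as stated: from $x<0$ the Brownian path can wander into $\supp\,\mu\subseteq\RR_+$ before time $T-t$, so the $\mu$-integral of the local time need not vanish. The paper instead stops at the first entry to $[0,a_+]$ (or to a neighbourhood of $0$ when $a_+=a_-=0$), which kills the $\mu$-contribution by construction while keeping the $\nu$-contribution strictly positive. Similarly, for ``finiteness of $b_+$ near $T$'' you want $x$ in $\supp\,\mu$ and far from $\supp\,\nu$, not ``to the right of $\supp\,\mu$''. For the horizontal convexity the paper avoids reflection entirely: if $(t,x)\in\DD_T$ with $x\ge a_+$, then $[t,T]\times\{x\}\subset\DD_T$ by time-monotonicity, so any path from $y>x$ must hit the stopping set before reaching $\supp\,\nu$, forcing $L^z_{\tau_*}\nu(dz)\equiv 0$ and hence $V(t,y)\le G(y)$.

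\emph{The real gap} is in your jump argument. Testing $v_t+\tfrac12 v_{xx}=\mu$ against $\varphi\in C_c^\infty(I)$ over the cylinder $[s,t]\times I$ and letting $s\uparrow t$ gives, after integrating by parts,
\[
-\int_I \varphi(x)\,v(s,x)\,dx \;+\;\tfrac12\int_s^t\!\!\int_I v\,\varphi''\,dx\,ds' \;=\; (t-s)\int_I \varphi\,d\mu.
\]
Both sides tend to $0$ as $s\uparrow t$, so no contradiction emerges: you have lost an order of magnitude. The paper's fix is to bring in the \emph{sign} information $V_t\le 0$. Working with $V$ rather than $v$, at each fixed $s<t$ one has
\[
0\;\ge\;\int_I V_t(s,y)\,\psi(y)\,dy \;=\; -\tfrac12\int_I V(s,y)\,\psi''(y)\,dy,
\]
so $\int_I V(s,y)\psi''(y)\,dy\ge 0$. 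Now the limit $s\uparrow t$ is informative: by continuity of $V$ one gets $\int_I G(y)\psi''(y)\,dy\ge 0$, and integration by parts turns this into $-2\int_I\psi\,d\mu\ge 0$, forcing $\mu(I)=0$. The same device, with the same sign input, also gives the terminal condition $b_\pm(T-)=\hat b_\pm$ (take $I=(\hat b_+,b_+(T-))$). Your sketch has all the right pieces except this crucial inequality; without it the PDE testing collapses to a tautology.
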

\begin{proof}
The proof is provided in a number of steps. 
\vspace{+6pt}

\noindent \emph{Step 1}. Here we prove that $\DD_T\cap\{t<T\}\neq \emptyset$. 

Arguing by contradiction assume that $\DD_T\cap\{t<T\}=\emptyset$. Fix $x\in\supp\,\{\mu\}$ and notice that with no loss of generality we may assume that $\text{dist}(x,\supp\,\nu)\ge 2\eps$ for some $\eps>0$. Indeed if no such $x$ and $\eps$ exist then $(D.1)$ and $(D.2)$ imply $\mu_\pm=\hat{b}_\pm=a_\pm$ with $\mu(\{a_\pm\})=0$, hence a contradiction. 

We define $\tau_{\eps}:=\inf\{t\ge0\,:\, B_t\notin A^x_\eps\}$ with $A^x_\eps:=(x-\eps,x+\eps)$ and also notice that $\mu(A^x_\eps)>0$.
Then for arbitrary $t\in[0,T)$ it holds
\begin{align}\label{eq:OSproof03}
V(t,x)=&G(x)+\int_\RR\EE_{x}L^z_{T-t}(\nu-\mu)(dz)\\
=&G(x)+\int_\RR\EE_{x}L^z_{T-t}\mathds{1}_{\{\tau_\eps\le T-t\}}\nu(dz)-\int_\RR\EE_{x}L^z_{T-t}\mu(dz)\nonumber\\
\le &G(x)+\int_\RR\EE_{x}L^z_{T-t}\mathds{1}_{\{\tau_\eps\le T-t\}}\nu(dz)-\int_{{A^x_\eps}}\EE_{x}L^z_{T-t}\mu(dz)\nonumber
\end{align}
where we have used that $L^z_{T-t}\mathds{1}_{\{\tau_\eps> T-t\}}=0$, $\PP_x$-a.s.~for all $z\in\supp\,\{\nu\}$, since $B_{t\wedge\tau_\eps}\in A^x_\eps$, for all $t\ge0$, $\PP_x$-a.s. We now analyse separately the two integral terms in \eqref{eq:OSproof03}. For the second one we note that
\begin{align}\label{eq:OSproof04}
\int_{{A^x_\eps}}\EE_{x}L^z_{T-t}\mu(dz)=&\int_{{A^x_\eps}}\left(\int_0^{T-t}\frac{1}{\sqrt{2\pi\,s}}e^{-\tfrac{1}{2s}(x-z)^2}ds\right)
\mu(dz)\\
\ge&\mu({A^x_\eps})\int_0^{T-t}\frac{1}{\sqrt{2\pi\,s}}e^{-\tfrac{1}{2s}\eps^2}ds=\mu({A^x_\eps})\EE_0L^\eps_{T-t}\nonumber
\end{align}
where we have used
\begin{align}\label{eq:Eloct}
\EE_{x}L^z_{T-t}=\int_0^{T-t}\frac{1}{\sqrt{2\pi\,s}}e^{-\tfrac{1}{2s}(x-z)^2}ds.
\end{align}
For the first integral in the last line of \eqref{eq:OSproof03} we use strong Markov property and additivity of local time to obtain
\begin{align*}
&\int_\RR\EE_{x}L^z_{T-t}\mathds{1}_{\{\tau_\eps\le T-t\}}\nu(dz)=\int_\RR\EE_{x}\Big[\EE_x\big( L^z_{T-t}\big|\cF_{\tau_\eps}\big)\mathds{1}_{\{\tau_\eps\le T-t\}}\Big]\nu(dz)\\
&=\int_\RR\EE_{x}\Big[\big(\EE_{B_{\tau_\eps}}\big( L^z_{T-t-\tau_\eps}\big)+L^z_{\tau_\eps}\big)\mathds{1}_{\{\tau_\eps\le T-t\}}\Big]\nu(dz)=\int_\RR\EE_{x}\Big[\EE_{B_{\tau_\eps}}\big( L^z_{T-t-\tau_\eps}\big)\mathds{1}_{\{\tau_\eps\le T-t\}}\Big]\nu(dz)\nonumber
\end{align*}
where we have also used $L^z_{\tau_\eps}=0$, $\PP_x$-a.s.~for $z\in\supp\,\{\nu\}$. We denote $A:=\{B_{\tau_\eps}=x+\eps\}$ and $A^c:=\{B_{\tau_\eps}=x-\eps\}$, then given that $t\mapsto L^z_t$ is increasing 
\begin{align*}
&\int_\RR\EE_{x}\Big[\EE_{B_{\tau_\eps}}\big( L^z_{T-t-\tau_\eps}\big)\mathds{1}_{\{\tau_\eps\le T-t\}}\Big]\nu(dz)\le\int_\RR\EE_{x}\Big[\EE_{B_{\tau_\eps}}\big( L^z_{T-t}\big)\mathds{1}_{\{\tau_\eps\le T-t\}}\Big]\nu(dz)\\
&=\int_\RR\Big(\EE_{x+\eps}\big[L^z_{T-t}\big]\EE_{x}\big[\mathds{1}_{\{\tau_\eps\le T-t\}}\mathds{1}_A\big]+\EE_{x-\eps}\big[L^z_{T-t}\big]\EE_{x}\big[\mathds{1}_{\{\tau_\eps\le T-t\}}\mathds{1}_{A^c}\big]\Big)\nu(dz).\nonumber
\end{align*}
Now we recall that $\text{dist}(x,\,\supp\,\nu)\ge 2\eps$ so that by \eqref{eq:Eloct} it follows
\begin{align*}
\EE_{x+\eps}\big[L^z_{T-t}\big]\le \int_0^{T-t}\frac{1}{\sqrt{2\pi\,s}}e^{-\tfrac{1}{2s}\eps^2}ds=\EE_0 L^\eps_{T-t}\quad\text{for all $z\in\supp\,\{\nu\}$}
\end{align*}
and analogously
\begin{align}\label{eq:OSproof08}
\EE_{x-\eps}\big[L^z_{T-t}\big]\le\EE_0 L^\eps_{T-t}\quad\text{ for all $z\in\supp\,\{\nu\}$.}
\end{align}
Adding up \eqref{eq:OSproof04}--\eqref{eq:OSproof08} we find
\begin{align}\label{eq:OSproof09}
V(t,x)\le G(x)+\EE_0(L^\eps_{T-t})\big(\PP_x(\tau_\eps\le T-t)-\mu({A^x_\eps})\big)
\end{align}
and since
\begin{align*}
\lim_{s\downarrow 0}\PP_x(\tau_\eps\le s)=0
\end{align*}
by continuity of Brownian paths, one can find $t$ close enough to $T$ so that $\PP_x(\tau_\eps\le T-t)<\mu({A^x_\eps})$ and \eqref{eq:OSproof09} gives a contradiction. Hence $\DD_T\cap\{t<T\}\neq\emptyset$.
\vspace{+6pt}

\noindent\emph{Step 2}. Here we show that $[0,T)\times(-a_-,a_+)\subseteq\CC_T$ and in particular if $a_-=a_+=0$ then $[0,T)\times\{0\}\subset\CC_T$. Moreover if $\nu(\{\pm a_\pm\})>0$ then also $[0,T)\times\{\pm a_\pm\}\subset\CC_T$, and finally, if $-\hat b_-<\hat b_+$, then $[0,T)\times(-\hat b_-,\hat b_+)\subseteq \CC_T$. We analyse separately the cases in which $\hat b_\pm>a_\pm$ and those in which $\hat b_+=a_+$ and/or $\hat b_-=a_-$.

Assume first $$-\hat b_-<-a_-\le a_+<\hat b_+.$$ Fix $t\in[0,T)$ and $x\in (-\hat{b}_-,\hat{b}_+)$. Under $\PP_x$ we let $\tau_b$ be 
\begin{align*}
\tau_b:=\inf\{s\ge 0\,:\,B_s\notin(-\hat{b}_-,\hat{b}_+)\}\wedge (T-t)
\end{align*}
 and applying It\^o-Tanaka-Meyer's formula we get
\begin{align}\label{eq:OSproof02}
V(t,x)\ge\EE_x G(B_{\tau_b})=&G(x)+\int_\RR \EE_x L^z_{\tau_b}\,(\nu-\mu)(dz) \\
=&G(x)+\int^{a_+}_{-a_-} \EE_x L^z_{\tau_b}\,\nu(dz)>G(x)\nonumber
\end{align}
where $(L^z_t)_{t\ge0}$ is the local time of $B$ at $z\in\RR$.  We have used that $B$ hits any point of $[-a_-,a_+]$ before $\tau_b$ with positive probability under $\PP_x$ whereas $L^z_{\tau_b}=0$, $\PP_x$-a.s.~for all $z\in\supp\,\{\mu\}$. The latter is true because $B_{t\wedge\tau_b}\in(-\hat b_-,\hat b_+)$ for all $t\ge0$, $\PP_x$-a.s. From \eqref{eq:OSproof02} it follows $[0,T)\times(-\hat{b}_-,\hat{b}_+)\subset \CC_T$.

Let us now consider $\hat{b}_+=a_+=0$ and prove that $[0,T)\times\{0\}\subset\CC_T$. From Assumption (D.2) we have~$\mu(\{0\})=0$ and $\nu(\{0\})=1$.
For an arbitrary $\eps>0$ and $t\in[0,T)$ we denote $A_\eps:=(-\eps,+\eps)$ and 
\begin{align*}
\tau_\eps:=\inf\{s\ge 0\,:\,B_s\notin A_\eps\}\wedge(T-t).
\end{align*} 
Then it follows
\begin{align}\label{eq:bound00}
V(t,0)\ge& \EE_{0}G(B_{\tau_\eps})=G(0)+\int_{\RR}\EE_{0}L^z_{\tau_\eps}(\nu-\mu)(dz)\\
=&G(0)+\int_{{A}_\eps}\EE_{0}L^z_{\tau_\eps}(\nu-\mu)(dz).\nonumber
\end{align}
From It\^o-Tanaka's formula we get
\begin{align}
\label{eq:bound01}&\int_{{A}_\eps}\EE_{0}L^z_{\tau_\eps}\nu(dz)= \nu(\{0\})\EE_{0}L^{0}_{\tau_\eps}=\nu(\{0\})\EE_0|B_{\tau_\eps}|\\
\label{eq:bound01.1}&\int_{{A}_\eps}\EE_{0}L^z_{\tau_\eps}\mu(dz)\le\mu({A}_\eps)\EE_0|B_{\tau_\eps}|
\end{align}
where in the last inequality we have used $\EE_{0}L^z_{\tau_\eps}\le\EE_0\big|B_{\tau_\eps}\big|$.
From \eqref{eq:bound00}, \eqref{eq:bound01} and \eqref{eq:bound01.1} we find
\begin{align}\label{eq:bound02}
V(t,0)-G(0)\ge \EE_0|B_{\tau_\eps}|\big(\nu(\{0\})-\mu({A}_\eps)\big)
\end{align}
and for $\eps>0$ sufficiently small the right-hand side of the last equation becomes strictly positive since $\mu({A}_\eps)\to\mu(\{0\})=0$ as $\eps\to0$. 

Notice that the arguments above hold even if $\nu(\{0\})\in(0,1)$, so that the same rationale may be used to show that $\nu(\{\pm a_\pm\})>0\implies [0,T)\times\{\pm a_\pm\}\subset\CC_T$. Hence condition $iv)$ in the statement of the theorem holds as well.

All the remaining cases with $\hat b_+=a_+$ and/or $\hat b_-=a_-$ can be addressed by a combination of the methods above. 
\vspace{+6pt}

\noindent \emph{Step 3}. Here we prove existence and monotonicity of the optimal boundaries. For each $t\in[0,T)$ we denote the $t$-section of $\CC_T$ by
\begin{align}\label{eq:t-secC}
\CC_T(t):=\big\{x\in\RR\,:\,(t,x)\in\CC_T\big\}
\end{align}
and we observe that the family $\big(\CC_T(t)\big)_{t\in[0,T)}$ is decreasing in time since $t\mapsto V(t,x)-G(x)$ is decreasing (Proposition \ref{prop:V}). Next we show that for each $t\in[0,T)$ it holds $\CC_T(t)=(-b_-(t),b_+(t))$ for some $b_\pm(t)\in[a_\pm,\infty]$.

Since $\DD_T\cap\{t<T\}\neq\emptyset$, due to step 1 above, with no loss of generality we assume $x\ge a_+$ and such that $(t,x)\in\DD_T$ for some $t\in[0,T)$ (alternatively we could choose $x\le -a_-$ with obvious changes to the arguments below). It follows that $[t,T]\times\{x\}\in\DD_T$ since $t\mapsto \CC_T(t)$ is decreasing. 

It is sufficient to prove that $(t,y)\in\DD_T$ for $y\ge x$. We argue by contradiction and assume that there exists $y>x$ such that $(t,y)\in \CC_T$. Recall $\tau_*$ in \eqref{eq:optst} and notice that for all $z\in\supp\,\{\nu\}$ we have $L^z_{\tau_*}=0$, $\PP_y$-a.s.~because $\tau_*\le\hat \tau_a$ with $\hat\tau_a$ the first entry time to $[-a_-,a_+]$. Hence we obtain the contradiction:
\begin{align*}
V(t,y)=\EE_y G(B_{\tau_*})=G(y)+\int_\RR\EE_y L^z_{\tau_*}(\nu-\mu)(dz)\le G(y).
\end{align*}

Finally, the maps $t\mapsto b_\pm(t)$ are decreasing by monotonicity of $t\mapsto\CC_T(t)$.
\vspace{+6pt}

\noindent \emph{Step 4}. We now prove conditions $i)$, $ii)$ and $iii)$ on finiteness of the boundaries. In particular we only address $i)$ as the other items follow by similar arguments. 

In step 1 and 3~above we obtained that for any $x\in\supp\,\{\mu\}$, with $x>a_+$, there is $t\in[0,T)$ such that $[t,T]\times [x,+\infty)\subset \DD_T$. Hence the second part of $i)$ follows. 

To prove that $b_-=+\infty$, we recall that $[0,T)\times\supp\{\nu\}\subseteq \CC_T$ from step 2. If $-a_-<a_+$, then for any $x<0$ and $t<T$ a strategy consisting of stopping at the first entry time to $[0,a_+]$, denoted by $\hat{\tau}_0$, gives
\begin{align}\label{eq:arg1}
V(t,x)\ge\EE_x G(B_{\hat{\tau}_0\wedge(T-t)})=G(x)+\int_\RR\EE_xL^z_{\hat{\tau}_0\wedge(T-t)}\nu(dz)>G(x)
\end{align}
because $\supp\,\{\mu\}\subseteq\RR_+$. If instead $a_+=a_-=0$ then there exists $\eps>0$ and $\delta>0$ such that $[0,T-\delta)\times(-\eps,\eps)\subseteq\CC_T$ because $\CC_T$ is open and $(0,T)\times\{0\}\subseteq\CC_T$. Therefore for $x<-\eps$ and $t<T-\delta$ we can repeat the argument used in \eqref{eq:arg1} by replacing $\hat \tau_0$ with 
\begin{align*}
\hat \tau_{\eps,\delta}:=\inf\{s\ge 0\,:\,x+B_s\ge \eps\}\wedge(T-t-\delta). 
\end{align*}
By arbitrariness of $\eps$ and $\delta$ it follows that $[0,T)\times\RR_-\subseteq\CC_T$.
\vspace{+6pt}

\noindent \emph{Step 5}. In this final step we show continuity properties of the boundaries. Right continuity of the boundaries follows by a standard argument which we repeat (only for $b_+$) for the sake of completeness. Fix $t_0\in[0,T)$ and let $(t_n)_{n\in\mathbb{N}}$ be a decreasing sequence such that $t_n\downarrow t_0$ as $n\to\infty$, then $(t_n,b_+(t_n))\to (t_0,b_+(t_0+))$ as $n\to\infty$, where the limit exists since $b_+$ is monotone. Since $(t_n,b_+(t_n))\in\DD_T$ for all $n$ and $\DD_T$ is closed, then it must be $(t_0,b_+(t_0+))\in\DD_T$ and hence $b_+(t_0+)\ge b_+(t_0)$ by definition of $b_+$. Since $b_+$ is decreasing then also $b_+(t_0+)\le b_+(t_0)$ and $b_+$ is right-continuous.

Next we prove \eqref{eq;jumpb}, which is equivalent to say that jumps of $b_\pm$ may only occur if $\mu$ is flat across the jump. For the proof we borrow arguments from \cite{DeA15}. Let us assume that for a given and fixed $t$ we have $b_+(t-)>b_+(t)$ and then take $b_+(t)\le x_1<x_2\le b_+(t-)$ and $0<t'<t$. Notice that the limit $b_+(t-)$ exists because $b_+$ is decreasing. We denote $\cal R$ the rectangular domain with vertices $(t',x_1)$, $(t,x_1)$, $(t,x_2)$, $(t',x_2)$ and denote $\partial_P\cal R$ its parabolic boundary. Then \eqref{eq:pde01} implies that $V\in C^{1,2}({\cal R})$ and it is the unique solution of
\begin{align}\label{eq:cont00}
u_t+\tfrac{1}{2}u_{xx}=0\quad\text{on $\cal R$ with $u=V$ on $\partial_P\cal R$.}
\end{align}
Note that in particular $V(t,x)=G(x)$ for $x\in[x_1,x_2]$. We pick $\psi\in C^\infty_c(x_1,x_2)$ such that $\psi\ge0$ and $\int_{x_1}^{x_2}\psi(y)dy=1$, and multiplying \eqref{eq:cont00} by $\psi$ and integrating by parts we obtain
\begin{align}\label{eq:cont01}
\int^{x_2}_{x_1}V_t(s,y)\psi(y)dy=-\frac{1}{2}\int^{x_2}_{x_1}V(s,y)\psi''(y)dy\quad \text{for $s\in(t',t)$}.
\end{align}
We recall that $V_t\le0$ in $\cal R$ by Proposition \ref{prop:V} and by taking limits as $s\uparrow t$, dominated convergence implies
\begin{align}\label{eq:cont02}
0\le\int^{x_2}_{x_1}V(t,y)\psi''(y)dy=\int^{x_2}_{x_1}G(y)\psi''(y)dy=-2\int^{x_2}_{x_1}\psi(y)\mu(dy)
\end{align}
where we have used that $\nu=0$ on $(x_1,x_2)$ since $b_+(\cdot)\ge a_+$ on $[0,T)$ by step 2 above. Since $(x_1,x_2)$ and $\psi$ are arbitrary we conclude that \eqref{eq:cont02} is only possible if $\mu\big((b_+(t),b_+(t-))\big)=0$.

Finally we prove that $b_\pm(T-)=\hat{b}_\pm$. As usual we only deal with $b_+$ but the same arguments can be used for $b_-$. Recall from step 2~above that $b_+(T-)\ge \hat{b}_+$ and arguing by contradiction we assume that $b_+(T-)>\hat{b}_+$. Then the same steps as in \eqref{eq:cont01}--\eqref{eq:cont02} may be applied to the interval $(\hat b_+,b_+(T-))$, and since $\mu((\hat b_+,b_+(T-)))>0$ by definition of $\hat b_+$ and the fact that $F_\mu$ is right-continuous, then we reach again a contradiction.
\end{proof}
\vspace{+6pt}

The behaviour of $b_{\pm}$ as $t$ approaches $T$ is very important for our purposes and knowing that $b_\pm(T-)=\hat{b}_\pm$ may not be sufficient in some instances. Therefore we provide here a refined result concerning these limits. 
\begin{lemma}\label{lem:flatb}
If $\mu(\{\hat{b}_+\})>0$ (resp.~$\mu(\{-\hat{b}_-\})>0$) then there exists $t_+\in[0,T)$ (resp.~$t_-\in[0,T)$) such that $b_+(t)=\hat{b}_+$ for all $t\in[t_+,T]$ (resp.~$b_-(t)=\hat{b}_-$ for all $t\in[t_-,T]$).
\end{lemma}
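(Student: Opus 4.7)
The plan is to show that $(t, \hat b_+) \in \DD_T$ for $t$ sufficiently close to $T$. Combined with $b_+(t) \ge \hat b_+$ on $[0,T)$ (Theorem~\ref{thm:OS-b}, step~2) and the monotonicity of $t \mapsto \CC_T(t)$ (step~3), this will force $b_+ \equiv \hat b_+$ on some $[t_+, T]$. I focus on the case $\mu(\{\hat b_+\}) > 0$ as the other case is entirely symmetric. Set $x := \hat b_+$ and observe that $\mu(\{\hat b_+\}) > 0$ together with (D.2) forces $\hat b_+ > a_+$, so $\eta := \hat b_+ - a_+ > 0$ separates $x$ from $\supp \nu$.

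The strategy is to bound $\EE_x G(B_\tau) - G(x)$ uniformly in the stopping time $\tau \le T - t$. Fix $\eps \in (0, \eta)$ and let $\tau_\eps := \inf\{s\ge 0 : B_s \notin (x-\eps, x+\eps)\}$. Using the It\^o-Tanaka-Meyer formula for $G$ (as in the proof of Theorem~\ref{thm:OS-b}, step~1) and splitting according to $\tau \wedge \tau_\eps$, I would write
\begin{align*}
\EE_x G(B_\tau) - G(x) = \underbrace{\big[\EE_x G(B_{\tau \wedge \tau_\eps}) - G(x)\big]}_{(I)} + \underbrace{\EE_x\big[(G(B_\tau) - G(B_{\tau_\eps}))\mathds{1}_{\{\tau > \tau_\eps\}}\big]}_{(II)}.
\end{align*}

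For $(I)$: on $[0, \tau \wedge \tau_\eps]$ the process $B$ stays in $(x-\eps, x+\eps)$, which is disjoint from $\supp \nu$ by the choice of $\eps$, so the $\nu$-piece of the local-time integral vanishes; keeping only the atom of $\mu$ at $x$ and using the Tanaka identity $\EE_x L^x_{\tau \wedge \tau_\eps} = \EE_x|B_{\tau \wedge \tau_\eps} - x| \ge \eps\,\PP_x(\tau \ge \tau_\eps)$ gives $(I) \le -\mu(\{\hat b_+\})\,\eps\,\PP_x(\tau \ge \tau_\eps)$. For $(II)$: the strong Markov property at $\tau_\eps$ together with the fact that $\tau - \tau_\eps$ is a stopping time of the shifted filtration bounded by $T - (t + \tau_\eps)$ shows that on $\{\tau > \tau_\eps\}$ the conditional expectation of $G(B_\tau) - G(B_{\tau_\eps})$ is dominated by $V(t + \tau_\eps, B_{\tau_\eps}) - G(B_{\tau_\eps})$ by the very definition of $V$. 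Since $B_{\tau_\eps} \in \{x \pm \eps\}$ and $s \mapsto V(s, \cdot)$ is decreasing (Proposition~\ref{prop:V}), this yields $(II) \le M(\eps, t)\,\PP_x(\tau > \tau_\eps)$ with $M(\eps, t) := \max_{y \in \{x \pm \eps\}} (V(t, y) - G(y))$.

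Combining and using $\PP_x(\tau > \tau_\eps) \le \PP_x(\tau \ge \tau_\eps)$ produces
\begin{align*}
\EE_x G(B_\tau) - G(x) \le \big(M(\eps, t) - \mu(\{\hat b_+\})\,\eps\big)\,\PP_x(\tau \ge \tau_\eps),
\end{align*}
uniformly in $\tau$. Continuity of $V$ and the terminal condition $V(T, \cdot) = G(\cdot)$ (Proposition~\ref{prop:V}) imply $M(\eps, t) \downarrow 0$ as $t \uparrow T$ for the fixed $\eps$, so one can pick $t_+ < T$ such that $M(\eps, t) < \mu(\{\hat b_+\})\,\eps$ for all $t \in [t_+, T]$. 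On that interval the right-hand side above is non-positive, so $V(t, x) \le G(x)$ and, combined with the trivial bound $V \ge G$, one obtains $(t, x) \in \DD_T$, which is the desired conclusion. The main technical obstacle is the post-exit piece $(II)$: its control requires the strong Markov reduction together with the crucial observation that $V - G$ vanishes uniformly on compact $x$-sections as $t \uparrow T$, since no direct local-time estimate survives past $\tau_\eps$ when $\tau$ is allowed to be an arbitrary stopping time rather than the horizon.
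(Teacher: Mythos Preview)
Your proof is correct and takes a genuinely different, more elementary route than the paper's.

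The paper argues by contradiction with the \emph{optimal} stopping time $\tau_*$: it writes $V(t,\hat b_+)-G(\hat b_+)=\EE\int L^z_{\tau_*}(\nu-\mu)(dz)$, then balances the negative $\mu$-atom contribution $-\mu(\{\hat b_+\})\EE L^{\hat b_+}_{\tau_*}$ against the positive $\nu$-contribution. Both terms are controlled through Burkholder--Davis--Gundy and Doob inequalities applied to $\tau_*\wedge\tau_R$, producing matching powers $\EE[(\tau_*\wedge\tau_R-t)^{q/2}]$ whose coefficients can be compared as $t\to T$.

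You instead bound $\EE_x G(B_\tau)-G(x)$ uniformly over \emph{all} admissible $\tau$, splitting at the exit time $\tau_\eps$ from a small neighbourhood of $\hat b_+$. The pre-exit piece~$(I)$ yields the clean lower bound $\mu(\{\hat b_+\})\,\eps\,\PP_x(\tau\ge\tau_\eps)$ via Tanaka's formula, and the post-exit piece~$(II)$ is controlled \emph{self-referentially} by $V-G$ evaluated at the two exit points, using that $G\le V$ and that $s\mapsto V(s,x+B_s)$ is a supermartingale (equivalently, your strong-Markov reduction). The crucial ingredient is then simply that $V(t,\cdot)-G(\cdot)\to 0$ as $t\uparrow T$ by continuity of $V$, so $M(\eps,t)$ can be made smaller than $\mu(\{\hat b_+\})\eps$.

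What each approach buys: the paper's estimates are entirely intrinsic (local-time and moment bounds on Brownian motion) and do not appeal to properties of $V$ beyond its definition; your argument is shorter and avoids BDG altogether, but relies on the already-established continuity of $V$ (Proposition~\ref{prop:V}) and its supermartingale property. Since both are available at this point in the paper, your route is a legitimate and arguably cleaner alternative.
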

\begin{proof}
We give a proof only for $\mu(\{\hat{b}_+\})>0$ as the other case is completely analogous. Here it is convenient to adopt the notation $\EE_{t,x}[\,\cdot\,]=\EE[\,\cdot\,|B_t=x]$ and with no loss of generality to think of $\Omega$ as the canonical space of continuous trajectories so that the shifting operator $\theta_\cdot:\Omega\to\Omega$ is well defined and $\theta_t \{\omega(s)\,,\,s\ge0\}=\{\omega(t+s)\,,\,s\ge0\}$.

Recalling that $\mu(\{\hat{b}_+\})>0\implies \hat{b}_+>a_+$ due to Assumption (D.2) we now argue by contradiction and assume that $[0,T)\times\{\hat{b}_+\}\in\CC_T$. By It\^o-Tanaka-Meyer formula
\begin{align}\label{flat1}
0<V(t,\hat{b}_+)-G(b_+)=\EE_{t,\hat{b}_+}\int_{\RR}L^z_{\tau_*}(\nu-\mu)(dz)\qquad\text{for all $t\in[0,T)$,}
\end{align}
where $\tau_*$ is optimal under $\PP_{t,\hat{b}_+}$, i.e.~$\tau_*:=\inf\{s\ge t\,:\,(s,B_s)\in\DD_T\}\wedge T$ under $\PP_{t,\hat{b}_+}$. We aim now at finding an upper bound for the right-hand side of \eqref{flat1}. Notice that
\begin{align}\label{flat2}
\EE_{t,\hat{b}_+}\int_{\RR}L^z_{\tau_*}(\nu-\mu)(dz)\le -\mu(\{\hat{b}_+\})\EE_{t,\hat{b}_+} L^{\hat{b}^+}_{\tau_*}+\EE_{t,\hat{b}_+}\int_{\RR}L^z_{\tau_*}\nu(dz)
\end{align}
and let us consider the two terms above separately.

For the first term we set $\tau_R:=\inf\{s\ge t\,:\,|B_s-\hat{b}_+|\ge R \}$ under $\PP_{t,\hat{b}_+}$ for some $R>0$ and use that $|B_{\tau_*\wedge\tau_R}-\hat{b}_+|^p\le R^p$ for any $p>0$ to obtain
\begin{align}\label{flat3}
\EE_{t,\hat{b}_+} L^{\hat{b}^+}_{\tau_*}\ge&\, \EE_{t,\hat{b}_+} L^{\hat{b}^+}_{\tau_*\wedge\tau_R}=\EE_{t,\hat{b}_+}|B_{\tau_*\wedge\tau_R}-\hat{b}_+|\nonumber\\
\ge& \frac{1}{R^p}\EE_{t,\hat{b}_+}|B_{\tau_*\wedge\tau_R}-\hat{b}_+|^{1+p}\ge \frac{c_p}{R^p}\EE_{t,\hat{b}_+}\left[(\tau_*\wedge\tau_R-t)^{\frac{1+p}{2}}\right]
\end{align}
where in the last inequality we have used Burkholder-Davis-Gundy inequality and $c_p>0$ is a fixed constant. 

Now for the second term in the right-hand side of \eqref{flat2} we pick $a\in(a_+,\hat{b}_+)$, set $\tau_{a}:=\inf\{s\ge t\,:\,B_s\le a\}$ and use strong Markov property along with the fact that for $z\in\supp\{\nu\}$ it holds $L^z_{s\wedge \tau_a}=0$, $\PP_{t,\hat{b}_+}$-a.s. These give 
\begin{align}\label{flat4}
\hspace{-3pc}
\int_{\RR}\EE_{t,\hat{b}_+}L^z_{\tau_*}\nu(dz)=&\int_{\RR}\EE_{t,\hat{b}_+}\left[\mathds{1}_{\{\tau_*>\tau_a\}}L^z_{\tau_*}\right]\nu(dz)\nonumber\\[+4pt]
=&\int_{\RR}\EE_{t,\hat{b}_+}\left[\mathds{1}_{\{\tau_*>\tau_a\}}\left(L^z_{\tau_a}+\EE_{t,\hat{b}_+}\left[L^z_{\tau_*}\circ\theta_{\tau_a}\big|\cF_{\tau_a}\right]\right)\right]\nu(dz)\\[+4pt]
=&\int_{\RR}\EE_{t,\hat{b}_+}\left[\mathds{1}_{\{\tau_*>\tau_a\}}\EE_{\tau_a,B_{\tau_a}}\left[L^z_{\tau_*}\right]\right]\nu(dz)\nonumber\\[+4pt]
\le &\,\PP_{t,\hat{b}_+}(\tau_*>\tau_a)\int_{\RR}\sup_{t\le s\le T}\EE_{s,a}\left[L^z_{\tau_*}\right]\nu(dz).\nonumber
\end{align}  
Since we are interested in $t\to T$ and $b_+(T-)=\hat b_+$ by Theorem \ref{thm:OS}, with no loss of generality we assume that $b_+(s)\le R$ for $s\in [t,T]$ and for $R>0$ sufficiently large. With no loss of generality then $[a,b_+(s)]\in[\hat b_+-R,\hat b_+ +R]$ for $s\in[t,T]$. The latter implies that $\{\tau_a<\tau_*\}=\{\tau_a<\tau_*\,,\,\tau_a<\tau_R\}$. Therefore, denoting $\delta:=|\hat{b}_+-a|$ we can estimate 
\begin{align}\label{flat5}
\PP_{t,\hat{b}_+}(\tau_a<\tau_*)\le&\, \PP_{t,\hat{b}_+}(\tau_a<\tau_*\wedge\tau_R)\nonumber\\
\le &\,\PP_{t,\hat{b}_+}\left(\sup_{t\le s\le \tau_*\wedge\tau_R}|B_s-\hat{b}_+|\ge \delta\right)\\[+4pt]
\le&\, \frac{1}{\delta^q}\EE_{t,\hat{b}_+}\left[\sup_{t\le s\le \tau_*\wedge\tau_R}|B_s-\hat{b}_+|^q\right]\nonumber\\[+4pt]
\le&\,\frac{c_q}{\delta^q}\EE_{t,\hat{b}_+}\left[|B_{\tau_*\wedge\tau_R}-\hat{b}_+|^q\right]
\le\frac{c'_q}{\delta^q}\EE_{t,\hat{b}_+}\left[(\tau_*\wedge\tau_R-t)^{q/2}\right]\nonumber
\end{align}
where $q>1$ is arbitrary but fixed, and we have used Doob's inequality and Burkholder-Davis-Gundy inequality with $c_q$, $c'_q$ suitable positive constants. 

To simplify notation we set $\mu_0:=\mu(\{\hat{b}_+\})>0$, $C_p:=c_p/R^p$, $C'_q:=c'_q/\delta^q$ and
\begin{align*}
g(t):=\int_{\RR}\sup_{t\le s\le T}\EE_{s,a}\left[L^z_{\tau_*}\right]\nu(dz),
\end{align*}
and observe that $g(t)\downarrow 0$ as $t\to T$ since $(L^z_s)_{t\le s\le T}$ is continuous and $\EE_{T,a}L^z_{T}=0$ for all $z\in\RR$. Plugging estimates \eqref{flat2}--\eqref{flat5} into \eqref{flat1} and choosing $q=1+p$ we obtain
\begin{align}
0<&\,V(t,\hat{b}_+)-G(\hat{b}_+)\nonumber\\
\le&\, -\mu_0 C_p\EE_{t,\hat{b}_+}\left[(\tau_*\wedge\tau_R-t)^{\frac{1+p}{2}}\right]+g(t)C'_q\EE_{t,\hat{b}_+}\left[(\tau_*\wedge\tau_R-t)^{q/2}\right]\nonumber\\
\le&\,\left(g(t)C'_q-\mu_0 C_p\right) \EE_{t,\hat{b}_+}\left[(\tau_*\wedge\tau_R-t)^{q/2}\right].\nonumber
\end{align}
Since $g(t)\downarrow 0$ as $t\to0$, then for $t<T$ but sufficiently close to $T$ we find a contradiction. Therefore there must exist $t_+\in[0,T)$ such that 
$[t_+,T]\times\{\hat{b}_+\}\in\DD_T$ and since $b_+(\,\cdot\,)\ge b_+(T-)= \hat{b}_+$ by Theorem \ref{thm:OS}, then it follows that $b_+(t)=\hat{b}_+$ for all $t\in[t_+,T]$ as claimed.
\end{proof}
\vspace{+6pt}

To link our optimal stopping problem to the study of the Skorokhod embedding it is important to analyse also the case when $T=+\infty$ in \eqref{eq:V} and to characterise the related optimal stopping boundaries. We define
\begin{align}\label{eq:Vinfty}
v(x):=\sup_{\tau\ge0}\EE_x \left[G(B_\tau)\mathds{1}_{\{\tau<+\infty\}}\right],\quad x\in\RR,
\end{align}
and the associated continuation region is
\begin{align}\label{def:CCinfty}
\CC_\infty:=\{x\in\RR\,:\,v(x)>G(x)\}.
\end{align}
For the study of \eqref{eq:Vinfty} it is useful to collect here some geometric properties of $G$. Since $G'(x)=2(F_\nu-F_\mu)(x)$, then the limits
\begin{align*}
G(+\infty):=\lim_{x\to\infty}G(x)\quad\text{and}\quad G(-\infty):=\lim_{x\to-\infty}G(x) 
\end{align*}
exist because $G'$ changes its sign at most once due to (D.1).
Notice however that $G(\pm\infty)$ might be equal to $+\infty$. 

Moreover 
\begin{align}\label{mon1}
\supp\{\mu\}\subseteq \RR_-\implies G'\le 0\:\text{on $\RR$ and}\:\supp\{\mu\}\subseteq \RR_+\implies G'\ge 0\:\text{on $\RR$.}
\end{align}
On the other hand, if $\mu$ is supported on both sides of $[-a_-,a_+]$ (hence $a_\pm<\infty$) then there exists a unique $a_0\in[-a_-,a_+]$ for which $F_\mu\ge F_\nu$ on $(-\infty,a_0)$ and $F_\mu\le F_\nu$ on $(a_0,+\infty)$. Hence $G$ has a unique global minimum at $a_0$. 

The geometric properties of $G$ collected so far and the fact that $G(0)=0$ allow us to conclude that 
\begin{align}\label{eq:supG}
\sup_{x\in\RR}G(x)=\max\{G(+\infty),G(-\infty)\}.
\end{align}
It is worth noticing that atoms of $\mu$ and $\nu$ correspond to discontinuities of $G'$ and if $\mu$ and $\nu$ are purely atomic then $G$ is continuous and piecewise linear.
Finally we have $G$ concave on $(-\infty,-a_-)\cup(a_+,+\infty)$ and convex on $[-a_-,a_+]$ because $G''(dx)=2(\nu-\mu)(dx)$. 

Recalling our notation for $\mu_\pm$ (see \eqref{not:mu}) and using the properties of $G$ illustrated above, we obtain the next characterisation of the value in \eqref{eq:Vinfty} and its continuation region $\CC_\infty$.
\begin{prop}\label{thm:OSinfiniteH}
The value function of \eqref{eq:Vinfty} is given by 
\begin{align*}
v(x)=\max\{G(+\infty),G(-\infty)\},\quad\text{for $x\in\RR$}
\end{align*}
(it could be $v=+\infty$).
Moreover, letting $\CC_\infty$ as in \eqref{def:CCinfty}, the following holds:
\begin{itemize}
\item[  i)] If $\max\{G(+\infty),G(-\infty)\}=+\infty$ then $\CC_\infty=\RR$;
\item[ ii)] If $G(-\infty)<G(+\infty)<+\infty$ then $\CC_\infty=(-\infty,\mu_+)$;
\item[iii)] If $G(+\infty)<G(-\infty)<+\infty$ then $\CC_\infty=(-\mu_-,\infty)$;
\item[ iv)] If $G(+\infty)=G(-\infty)<+\infty$ then $\CC_\infty=(-\mu_-,\mu_+)$.
\end{itemize}
\end{prop}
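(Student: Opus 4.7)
The plan is first to identify $v(x)$ as the claimed constant, and then to read off $\CC_\infty$ and $\DD_\infty$ from the shape of $G$.

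For $v(x)$, I would use a matching two-sided estimate. The upper bound $v(x) \le \max\{G(+\infty),G(-\infty)\}$ is immediate from \eqref{eq:supG}, since $G(B_\tau)\mathds{1}_{\{\tau<+\infty\}} \le \sup_y G(y)$ almost surely for every stopping time $\tau$. For the matching lower bound, let $\tau^\pm_n := \inf\{t\ge 0 : B_t = \pm n\}$; recurrence of one-dimensional Brownian motion gives $\tau^\pm_n<+\infty$ $\PP_x$-a.s., and using $\tau^\pm_n$ as a sub-optimal stopping strategy yields
\begin{equation*}
v(x) \ge \EE_x\big[G(B_{\tau^\pm_n})\mathds{1}_{\{\tau^\pm_n<+\infty\}}\big] = G(\pm n) \to G(\pm\infty) \quad \text{as } n\to\infty.
\end{equation*}
Hence $v(x) = \max\{G(+\infty),G(-\infty)\}$ for every $x\in\RR$.

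Case (i) is then immediate: $v(x)=+\infty>G(x)$ for every finite $x$, so $\CC_\infty=\RR$. In cases (ii)--(iv) the value $V^*:=v(x)$ is a finite constant and $\CC_\infty=\{G<V^*\}$, $\DD_\infty=\{G=V^*\}$, so the problem reduces to identifying where $G$ attains its supremum. I would do this from the geometry of $G$ described before the statement: $G'=2(F_\nu-F_\mu)$, with $G$ non-increasing on $(-\infty,a_0]$ and non-decreasing on $[a_0,+\infty)$ (or globally monotone if $\mu$ is one-sided, cf.~\eqref{mon1}), and with flat pieces precisely where $F_\nu$ and $F_\mu$ coincide, namely the tails $(-\infty,-\mu_-]$ (both cdfs equal $0$) and $[\mu_+,+\infty)$ (both cdfs equal $1$). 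Matching these flat tails to the value $V^*=\max\{G(+\infty),G(-\infty)\}$ delivers $\DD_\infty=[\mu_+,+\infty)$ in case (ii), $\DD_\infty=(-\infty,-\mu_-]$ in case (iii), and $\DD_\infty=(-\infty,-\mu_-]\cup[\mu_+,+\infty)$ in case (iv).

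The main subtlety will be verifying the \emph{strict} inequality $G(x)<V^*$ throughout the interior of the claimed continuation region. On the approach to the right flat tail one needs $G'>0$ on some subinterval of $(x,\mu_+)$; this follows because the definition $\mu_+=\sup\supp\mu$ forces $F_\mu(y)<1$ somewhere in $(\max(x,a_+),\mu_+)$, while $F_\nu\equiv 1$ on $[a_+,+\infty)$, so $G'(y)=2(1-F_\mu(y))>0$ there. A symmetric argument handles the approach to $-\mu_-$, using $F_\mu>0$ on $(-\mu_-,+\infty)$ together with $F_\nu\equiv 0$ on $(-\infty,-a_-)$ and $-\mu_-\le -\hat b_-\le -a_-$ whenever $\mu$ has mass on $\RR_-$, as granted by (D.1)--(D.2). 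Combined with the monotonicity pieces of $G$, these strict inequalities rule out any flat piece of $G$ at level $V^*$ strictly inside $(-\mu_-,\mu_+)$ and complete the characterisation.
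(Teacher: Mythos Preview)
Your proposal is correct and follows essentially the same route as the paper: the upper bound comes from \eqref{eq:supG}, the lower bound from first-hitting times $\tau_n=\inf\{t\ge0:B_t\ge n\}$ (resp.\ $\le -n$) and recurrence, and the identification of $\CC_\infty$ from the monotonicity structure of $G$ together with the strict positivity of $G'=2(1-F_\mu)$ on $[a_+,\mu_+)$ (and its mirror on $(-\mu_-,-a_-]$). Your treatment of the strict inequality $G(x)<V^*$ inside the claimed continuation set is in fact slightly more explicit than the paper's, which handles it with the single line ``$G'>0$ on $[a_+,\mu_+)$ implies $G(x)<G(\mu_+)$ for $x<\mu_+$''.
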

\begin{proof}
Due to \eqref{eq:supG} we immediately have $v(x)\le \max\{G(+\infty),G(-\infty)\}$ from \eqref{eq:Vinfty}, so we need to prove the reverse inequality.

With no loss of generality we may consider $G(+\infty)=\max\{G(+\infty),G(-\infty)\}$ and regardless of whether or not $G(+\infty)$ is finite we can argue as follows: we pick $\tau_n:=\inf\{t\ge 0:B_t\ge n\}$ so that $v(x)\ge \EE_x G(B_{\tau_n})=G(n)$ because $\PP_x(\tau_n<+\infty)=1$. Taking the limit as $n\to\infty$ we get $v(x)\ge G(+\infty)$ as needed.

If $v=+\infty$ then $\CC_\infty=\RR$ since $G(x)$ is finite for all $x\in\RR$. The geometry of $\CC_\infty$ in the remaining cases can be worked out easily. Let us consider for example the setting of $ii)$. Since $G(+\infty)>G(-\infty)$ then it must be $\supp\{\mu\}\cap\RR_+\neq \emptyset$, due to \eqref{mon1}. It follows that $0\le a_+<\mu_+$, because $\mu_+=a_+$ is ruled out by (D.2). Then $G'> 0$ on $[a_+,\mu_+)$, which implies that $G(x)<G(\mu_+)$ for $x<\mu_+$ and $G(x)=G(\mu_+)$ for all $x\ge\mu_+$. Hence $G(+\infty)=G(\mu_+)$, and since $v(x)=G(+\infty)$ then $\CC_\infty=(-\infty,\mu_+)$. We notice that the argument holds also if $\mu_+=+\infty$. 

The geometry of $\CC_\infty$ in cases $iii)$ and $iv)$ may be obtained by analogous considerations.
\end{proof}

It is useful to remark that if $\CC_\infty=\RR$ then there is no optimal stopping time in \eqref{eq:Vinfty}. 
Now we give a corollary which will be needed in the rest of the paper and follows immediately from the above proposition
\begin{coroll}\label{cor:suppmu}
Let $b^\infty_\pm>0$ (possibly infinite) be such that $-b^\infty_-$ and $b^\infty_+$ are the lower and upper boundary, respectively, of $\CC_\infty$. Then $\supp\{\mu\}\subseteq [-b^\infty_-,b^\infty_+]$ and in particular $b^\infty_+=+\infty$ (resp.~$b^\infty_-=+\infty$) if $\supp\{\mu\}\cap\RR_+=\emptyset$ (resp.~$\supp\{\mu\}\cap\RR_-=\emptyset$).
\end{coroll}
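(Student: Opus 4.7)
The plan is to obtain the corollary as a direct consequence of the four-case classification in Proposition \ref{thm:OSinfiniteH} combined with the monotonicity implication \eqref{mon1} and the definitions \eqref{not:mu} of $\mu_\pm$. In each of the four cases I will read off the explicit form of $\CC_\infty$, identify $b^\infty_\pm$ with $\mu_\pm$ or $+\infty$, and then check both claimed conclusions.

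First I would dispose of the inclusion $\supp\{\mu\}\subseteq [-b^\infty_-,b^\infty_+]$. In case $i)$ of Proposition \ref{thm:OSinfiniteH} one has $\CC_\infty=\RR$, so $b^\infty_-=b^\infty_+=+\infty$ and the inclusion is trivial. In case $ii)$ the continuation region is $(-\infty,\mu_+)$, giving $b^\infty_+=\mu_+$ and $b^\infty_-=+\infty$; by definition of $\mu_+$ in \eqref{not:mu} we have $\supp\{\mu\}\subseteq(-\infty,\mu_+]$, which matches. Case $iii)$ is symmetric, and case $iv)$ gives $[-b^\infty_-,b^\infty_+]=[-\mu_-,\mu_+]$, which contains $\supp\{\mu\}$ by definition.

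Next I would establish the two ``in particular'' statements, focusing on $b^\infty_+=+\infty$ when $\supp\{\mu\}\cap\RR_+=\emptyset$ (the other statement being symmetric). Under this hypothesis \eqref{mon1} yields $G'\le 0$ on $\RR$, so $G$ is nonincreasing and hence $G(+\infty)\le G(-\infty)$. Two possibilities then arise from Proposition \ref{thm:OSinfiniteH}: either $G(-\infty)=+\infty$, placing us in case $i)$ with $b^\infty_+=+\infty$; or $G(-\infty)<+\infty$, which rules out case $ii)$ (since that case requires $G(-\infty)<G(+\infty)$) and leaves case $iii)$ or $iv)$, both of which give $b^\infty_+=+\infty$. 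In all sub-cases the claim follows.

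There is no real obstacle: the entire argument is a case check against the classification already produced. The only mild subtlety is the degenerate equality $G(+\infty)=G(-\infty)<\infty$ under $\supp\{\mu\}\subseteq\RR_-$, but since both relevant cases $iii)$ and $iv)$ still yield $b^\infty_+=+\infty$, this causes no difficulty.
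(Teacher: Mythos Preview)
Your approach is exactly the one the paper intends: the corollary is stated as following ``immediately'' from Proposition~\ref{thm:OSinfiniteH}, and your case check against items $i)$--$iv)$ is the natural way to make that explicit.

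There is one slip in your handling of the ``in particular'' claim. You write that under $\supp\{\mu\}\cap\RR_+=\emptyset$ cases $iii)$ and $iv)$ ``both give $b^\infty_+=+\infty$''. Case $iii)$ does, but case $iv)$ gives $\CC_\infty=(-\mu_-,\mu_+)$ and hence $b^\infty_+=\mu_+$, which is not $+\infty$ (indeed $\mu_+\le 0$ under the hypothesis). The correct resolution is that case $iv)$ simply \emph{cannot occur} here: you already have $G'\le 0$ from \eqref{mon1}, so $G$ is nonincreasing, and $G(+\infty)=G(-\infty)<\infty$ would force $G$ to be constant, i.e.\ $F_\nu\equiv F_\mu$ and $\mu=\nu$; this is incompatible with Assumptions (D.1)--(D.2), since $\supp\nu\subseteq[-a_-,a_+]$ while $\mu$ puts no mass on $(-\hat b_-,\hat b_+)\supseteq(-a_-,a_+)$ and (D.2) rules out the endpoint atoms. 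So replace ``both give $b^\infty_+=+\infty$'' by ``case $iv)$ is excluded'', and the argument is clean.
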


Recall our notation $V^T$ for the value function of problem \eqref{eq:V} with time-horizon $T>0$ and $b^T_\pm$ for the corresponding optimal boundaries. We now characterise the limits of $b^T_\pm$ as $T\to\infty$ and we show that these coincide with $b^\infty_\pm$ of the above corollary as expected.
\begin{prop}\label{prop:s-lim}
Let $b^\infty_\pm$ be as in Corollary \ref{cor:suppmu}, then 
\begin{align*}
\lim_{T\to\infty}b^T_\pm(0)=b^\infty_\pm.
\end{align*}
\end{prop}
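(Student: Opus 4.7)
The plan is to prove the equality by a double inclusion, after first observing monotonicity of $T\mapsto b^T_\pm(0)$. For $T_1\le T_2$ every $[0,T_1]$-valued stopping time is also $[0,T_2]$-valued, so $V^{T_1}(0,x)\le V^{T_2}(0,x)$ by comparison of the suprema in \eqref{eq:V}; hence $\CC_{T_1}(0)\subseteq\CC_{T_2}(0)$ and $b^{T_1}_\pm(0)\le b^{T_2}_\pm(0)$. Therefore $B^\infty_\pm:=\lim_{T\to\infty}b^T_\pm(0)\in\RR_+\cup\{+\infty\}$ exist and it remains to show $B^\infty_\pm=b^\infty_\pm$.

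The inequality $B^\infty_\pm\le b^\infty_\pm$ is immediate: if $x\in(-b^T_-(0),b^T_+(0))$ then $V^T(0,x)>G(x)$; any bounded stopping time is a.s.\ finite and hence admissible in \eqref{eq:Vinfty}, so $V^T(0,x)\le v(x)$. Thus $v(x)>G(x)$, i.e.\ $x\in\CC_\infty=(-b^\infty_-,b^\infty_+)$; taking the union over $T$ gives $(-B^\infty_-,B^\infty_+)\subseteq(-b^\infty_-,b^\infty_+)$.

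For the reverse inequality I would argue by contradiction. Suppose $B^\infty_+<b^\infty_+$ and pick $x\in(B^\infty_+,b^\infty_+)\subseteq\CC_\infty$ (the case $B^\infty_-<b^\infty_-$ is analogous); then $v(x)>G(x)$ while $(0,x)\in\DD_T$ for every $T$ forces $V^T(0,x)=G(x)$ uniformly in $T$. A contradiction then follows from the key claim $\lim_{T\to\infty}V^T(0,x)=v(x)$. To establish this I use the two-sided bounded exit times $\sigma_{a,b}:=\inf\{s\ge 0:B^x_s\notin(-a,b)\}$ with $a>-x$ and $b>x$; since $B^x_{\sigma_{a,b}\wedge T}\in[-a,b]$, bounded convergence yields
\[
\lim_{T\to\infty}\EE_x G\!\left(B^x_{\sigma_{a,b}\wedge T}\right)=\EE_x G\!\left(B^x_{\sigma_{a,b}}\right)=G(b)\,\frac{x+a}{a+b}+G(-a)\,\frac{b-x}{a+b},
\]
and since $\sigma_{a,b}\wedge T$ is admissible for $V^T(0,x)$, the right-hand side is a lower bound for $V^\infty(0,x):=\lim_T V^T(0,x)$. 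Sending $a\to\infty$ with $b$ fixed, and using $G(y)/y\to 0$ as $|y|\to\infty$ -- a consequence of L'H\^opital applied to $G'(y)=2(F_\nu-F_\mu)(y)\to 0$ at $\pm\infty$ -- the second summand $G(-a)(b-x)/(a+b)=(G(-a)/a)\cdot a(b-x)/(a+b)$ vanishes while the first tends to $G(b)$; then $b\to\infty$ yields $V^\infty(0,x)\ge G(+\infty)$. The symmetric ordering ($b\to\infty$ with $a$ fixed, then $a\to\infty$) gives $V^\infty(0,x)\ge G(-\infty)$, whence by Proposition \ref{thm:OSinfiniteH} $V^\infty(0,x)\ge v(x)$; combined with the trivial $V^\infty(0,x)\le v(x)$ this closes the argument.

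The main obstacle is precisely this last approximation. The stopping times exhibited in Proposition \ref{thm:OSinfiniteH} to realise $v(x)$ -- single-point hitting times such as $\inf\{s:B^x_s=n\}$ -- have infinite expectation, so their naive truncations $\tau\wedge T$ may fail to converge in expectation because of an unintegrable residue $\EE_x[G(B_T)\mathds{1}_{\tau>T}]$. Switching to exits from two-sided bounded intervals keeps the integrand bounded and enables dominated convergence, while the asymmetric iterated limits in $a$ and $b$, together with the sublinear growth $G(y)=o(|y|)$ at $\pm\infty$, is what recovers the full value $v(x)$ from bounded stopping times.
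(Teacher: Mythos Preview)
Your proof is correct and follows essentially the same strategy as the paper's: establish $V^T(0,x)\uparrow v(x)$ by using two-sided bounded exit times to pass to the limit, and then deduce boundary convergence by the standard inclusion/contradiction argument. The paper takes $m\to\infty$ then $n\to\infty$ after assuming WLOG $v=G(+\infty)$, while you run both iterated limits; the only minor point is that invoking L'H\^opital for $G(y)/y\to 0$ is slightly loose since $G'$ may fail to exist at atoms of $\mu$ or $\nu$---the paper instead bounds $\tfrac{1}{m}\int_{-m}^{0}F_\nu(z)\,dz$ directly---but your claim is correct and the fix is immediate from the integral definition of $G$.
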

\begin{proof}
Note that $(V^T)_{T>0}$ is a family of functions increasing in $T$ and such that $V^T(0,x)\le v(x)$ (cf.~\eqref{eq:Vinfty}). Set
\begin{align}\label{eq:Vinfty01}
V^\infty(x):=\lim_{T\to\infty}V^T(0,x),\quad x\in\RR
\end{align}
and note that $V^\infty\le v$ on $\RR$. To prove the reverse inequality we introduce the stopping times
\begin{align}
\tau_n:=\inf\{t\ge0:B_t\ge n\},\quad \tau_{-m}:=\inf\{t\ge0:B_t\le -m\}
\end{align}
for $n,m\in\NN$. With no loss of generality we consider the case $v(x)=G(+\infty)$ (possibly infinite) as the remaining cases can be dealt with in the same way.
For any $T>0$ and for $x\in(-m,n)$ we have
\begin{align*}
V^T(0,x)\ge \EE_x\left[G(B_{\tau_n\wedge\tau_{-m}\wedge T})\right]
\end{align*}
and since $G$ is bounded on $[-m,n]$ we can take limits as $T\to\infty$ and use dominated convergence to obtain
\begin{align}\label{eq:low0}
V^\infty(x)\ge& \EE_x\left[G(B_{\tau_n\wedge\tau_{-m}})\right]=G(n)\PP_x(\tau_n<\tau_{-m})+G(-m)\PP_x(\tau_n>\tau_{-m})\nonumber\\
=&\frac{x+m}{n+m}G(n)+\frac{n-x}{n+m}G(-m).
\end{align}

The plan now is to take $m\to\infty$ while keeping $n$ fixed. The first term in the last expression above clearly converges to $G(n)$ as $m\to\infty$. For the second term we observe that, since $F_\nu(z)\downarrow 0$ as $z\to-\infty$ and it is monotonic, then there exists $c_n>0$ such that $0\le F_\nu(z)\le n^{-2}$ for $z\in(-\infty,-c_n]$. Hence, taking $m>c_n$ we can estimate
\begin{align}\label{eq:low1}
\frac{1}{m}G(-m)=&\frac{2}{m}\int_{-m}^0(F_\mu-F_\nu)(z)dz\ge -\frac{2}{m}\int_{-m}^0 F_\nu(z)dz\nonumber\\
=&-\frac{2}{m}\left(\int^{-c_n}_{-m}F_\nu(z)dz+\int^0_{-c_n}F_\nu(z)dz\right)\ge-\frac{2}{m}\left(n^{-2}(m-c_n)+c_n\right).
\end{align}
Taking limits as $m\to\infty$ in \eqref{eq:low0} and using \eqref{eq:low1} we obtain
\begin{align*}
V^\infty(x)\ge G(n)-2(n-x) n^{-2}
\end{align*}
and, finally taking $n\to\infty$ we conclude $V^\infty(x)\ge G(+\infty)=v(x)$. Since $x\in\RR$ was arbitrary we have
\begin{align}\label{eq:comp01}
V^\infty(x)=v(x),\quad x\in\RR.
\end{align}

We are now ready to prove convergence of the related optimal boundaries.
Note that if $(0,x)\in\CC_T$ for some $T$, then $v(x)\ge V^S(0,x)\ge V^T(0,x)>G(x)$ for any $S\ge T$, thus implying that the families $(b^T_\pm(0))_{T>0}$ are increasing in $T$ and $(-b^T_-(0),b^T_+(0))\subseteq (-b^\infty_-,b^\infty_+)$ for all $T>0$.
It follows that
\begin{align*}
\tilde{b}_\pm:=\lim_{T\to\infty} b^T_\pm(0) \le b^\infty_\pm.
\end{align*}
To prove the reverse inequality we take an arbitrary $x\in\CC_\infty$ and assume $x\notin(-\tilde{b}_-,\tilde{b}_+)$. Then $v(x)\ge G(x)+\delta$ for some $\delta>0$ and there must exist $T_\delta>0$ such that $V^T(0,x)\ge G(x)+\delta/2$ for all $T\ge T_\delta$ by \eqref{eq:comp01} and \eqref{eq:Vinfty01}. Hence $x\in(-b^T_-(0),b^T_+(0))$ for all $T$ sufficiently large and since $(-b^T_-(0),b^T_+(0))\subseteq(-\tilde{b}_-,\tilde{b}_+)$ we find a contradiction and conclude that $\tilde b_\pm=b^\infty_\pm$.
\end{proof}


\subsection{Further regularity of the value function}\label{sec:C1}

In this section $0<T<+\infty$ is fixed and we use the simpler notation $V=V^T$ unless otherwise specified (as in Corollary \ref{cor:smf1}). We analyse the behaviour of $V_x(t,\,\cdot\,)$ at points $\pm b_\pm(t)$ of the optimal boundaries. We notice in particular that under the generality of our assumptions the map $x\mapsto V_x(t,x)$ may fail to be continuous across $\pm b_\pm(t)$ due to the fact that $G$ is not everywhere differentiable.  

More importantly we prove by purely probabilistic methods that $V_t$ is instead continuous on $[0,T)\times\RR$. This is a result of independent interest which, to the best of our knowledge, is new in the probabilistic literature concerning optimal stopping and free-boundaries. 
For recent PDE results of this kind one may refer instead to \cite{BDM06}.
Some of the proofs are given in appendix since they follow technical arguments which are not needed to understand the main results of the section. We start by providing useful continuity properties of the optimal stopping times.

Thanks to Theorem \ref{thm:OS} we have that the interior of $\DD_T$ is not empty and we denote it by $\DD^\circ_T$. We also introduce the entry time to $\DD^\circ_T$, denoted by 	
\begin{align}\label{def:tau-t}
\tilde \tau_*(t,x):=\inf\big\{s\ge 0\,:\,(t+s,x+B_s)\in \DD^\circ_T\big\}\wedge (T-t).
\end{align}
We recall $\tau_*$ as in \eqref{eq:optst} and notice that 
\begin{align}\label{st-interior}
\tau_*(t,x)=\tilde \tau_*(t,x),~\text{$\PP$-a.s.~for all $(t,x)\in[0,T)\times\RR$}
\end{align}
due to monotonicity of $b_\pm$ and the law of iterated logarithm (this fact is well known  and the interested reader may find a proof for example in \cite[Lemma 6.2]{EJ16} or \cite[Lemma 5.1]{DeAEk16}). 

The next lemma, whose proof is given in appendix for completeness, is an immediate consequence of \eqref{st-interior}. The second lemma below follows from the law of iterated logarithm and its proof is also postponed to the appendix.
\begin{lemma}\label{lem:contST}
Let $(t,x)\in\partial\CC_T$,
then for any sequence $(t_n,x_n)_n\in \CC_T$ such that $(t_n,x_n)\to(t,x)$ as $n\to\infty$ one has
\begin{align}\label{eq:contST00}
\lim_{n\to\infty}\tau_*(t_n,x_n)=0,\quad\PP-a.s.
\end{align}
\end{lemma}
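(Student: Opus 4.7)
Because $\CC_T$ is open, $\partial \CC_T \subseteq \DD_T$, and hence $\tau_*(t,x) = 0$ directly from the definition \eqref{eq:optst}. Invoking \eqref{st-interior}, this upgrades to $\tilde{\tau}_*(t,x) = 0$ $\PP$-a.s., i.e.\ on a full-measure set $\Omega_0$ the time-space Brownian path $\bigl(t + s, x + B_s(\omega)\bigr)_{s \ge 0}$ enters the open set $\DD^\circ_T$ within an arbitrarily small time-neighbourhood of $s = 0$. The strategy is then to transfer this immediate-entry property from the target point $(t,x)$ to the approximating sequence $(t_n, x_n) \in \CC_T$ by using the openness of $\DD^\circ_T$ and the continuity of the translations $(t',x') \mapsto (t' + s, x' + B_s(\omega))$.

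\textbf{Main step.} Fix $\omega \in \Omega_0$ on which additionally $s \mapsto B_s(\omega)$ is continuous, and let $\epsilon > 0$; the case $t = T$ is immediate since $\tau_*(t_n, x_n) \le T - t_n \to 0$, so we may assume $t + \epsilon < T$. By the definition of $\tilde{\tau}_*$ in \eqref{def:tau-t}, there exists $s_\epsilon = s_\epsilon(\omega) \in (0, \epsilon)$ with $(t + s_\epsilon, x + B_{s_\epsilon}(\omega)) \in \DD^\circ_T$, and the openness of $\DD^\circ_T$ yields $\delta > 0$ such that every point at Euclidean distance less than $\delta$ from $(t + s_\epsilon, x + B_{s_\epsilon}(\omega))$ lies in $\DD^\circ_T$. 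For $n$ sufficiently large, $|(t_n, x_n) - (t, x)| < \delta$, so $(t_n + s_\epsilon, x_n + B_{s_\epsilon}(\omega)) \in \DD^\circ_T \subseteq \DD_T$. The definition of $\tau_*$ then gives $\tau_*(t_n, x_n)(\omega) \le s_\epsilon < \epsilon$ for every such $n$, and sending $\epsilon \downarrow 0$ yields $\lim_n \tau_*(t_n, x_n)(\omega) = 0$ on the full-measure set $\Omega_0$.

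\textbf{Main obstacle.} The only substantive input is \eqref{st-interior}: the identity $\tau_*(t,x) = \tilde{\tau}_*(t,x)$ a.s.\ ensures that from any point of $\DD_T$, in particular from $\partial\CC_T$, the time-space Brownian motion enters $\DD^\circ_T$ with probability one in arbitrarily small time. This is precisely what the authors borrow from the cited literature; it ultimately rests on the law of iterated logarithm combined with the monotonicity and right-continuity of $b_\pm$ established in Theorem \ref{thm:OS}, which prevent $(t+s, x+B_s)$ from spending a positive-length initial excursion in $\overline{\CC_T}$ when started from the boundary. Once this ingredient is accepted, the remainder of the argument is a routine topological continuity fact, so no further technical difficulty arises.
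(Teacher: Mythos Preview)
Your proof is correct and follows essentially the same route as the paper's: both invoke \eqref{st-interior} to obtain $\tilde\tau_*(t,x)=0$ a.s., then for fixed $\omega$ pick a time $s\in(0,\epsilon)$ at which the path sits in the open set $\DD^\circ_T$, and finally use openness to transfer this to the approximating sequence $(t_n,x_n)$. A minor difference is that you bound $\tau_*(t_n,x_n)$ directly via $\DD^\circ_T\subseteq\DD_T$, whereas the paper first bounds $\tilde\tau_*(t_n,x_n)$ and then appeals to \eqref{st-interior} once more; your version is marginally cleaner on this point.
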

\begin{lemma}\label{cor:contST-t}
Let $(t,x)\in\CC_T$ and assume that $(t_h)_{h\ge0}$ is such that $t_h\uparrow t$ as $h\to\infty$. Then
\begin{align}\label{eq:contST06}
\lim_{h\to\infty}\tau_*(t_h,x)=\tau_*(t,x),\quad\PP-a.s.
\end{align}
and the convergence is monotonic from above.
\end{lemma}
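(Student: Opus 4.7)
The plan is to exploit the monotonicity of the $t$-sections of $\CC_T$, guaranteed by Theorem \ref{thm:OS}, together with the identity \eqref{st-interior} between the first entries to $\DD_T$ and to $\DD^\circ_T$.

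First, because $b_\pm$ are decreasing, one has $\CC_T(t')\subseteq\CC_T(t'')$ whenever $t'\ge t''$. Hence, with $\omega$ fixed, any $s<\tau_*(t_{h+1},x)(\omega)$ satisfies $x+B_s(\omega)\in\CC_T(t_{h+1}+s)\subseteq\CC_T(t_h+s)$, so $\tau_*(t_h,x)\ge\tau_*(t_{h+1},x)$; the same comparison with $t$ in place of $t_{h+1}$ yields $\tau_*(t_h,x)\ge\tau_*(t,x)=:\tau$. Thus the sequence $\tau_*(t_h,x)$ is non-increasing in $h$ and bounded below by $\tau$, so it converges monotonically from above to some $\tau_\infty\ge\tau$. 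This already proves the monotonicity-from-above assertion.

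It remains to show $\tau_\infty=\tau$ almost surely, for which I would distinguish two cases. If $\tau=T-t$, then the trivial sandwich $T-t\le\tau_*(t_h,x)\le T-t_h\downarrow T-t$ forces $\tau_\infty=\tau$. In the complementary case $\tau<T-t$, the path reaches $\DD_T$ strictly before truncation, and by \eqref{st-interior} one has $\tau=\tilde\tau_*(t,x)$ a.s. In particular, for any $\epsilon>0$ one can pick $s\in(\tau,\tau+\epsilon)$ such that $(t+s,x+B_s)\in\DD^\circ_T$; openness of $\DD^\circ_T$ furnishes $\eta>0$ such that an $\eta$-ball around $(t+s,x+B_s)$ sits inside $\DD_T$, and then for every $h$ large enough that $|t-t_h|<\eta$ and $s\le T-t_h$ the point $(t_h+s,x+B_s)$ lies in $\DD_T$. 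This forces $\tau_*(t_h,x)\le s<\tau+\epsilon$, so letting $h\to\infty$ and then $\epsilon\downarrow0$ delivers $\tau_\infty\le\tau$, concluding the argument.

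The genuinely delicate point is the second case: one must rule out that $\tau_*(t_h,x)$ stabilises strictly above $\tau$. This is precisely where \eqref{st-interior} is indispensable, as it encodes the fact (ultimately a consequence of the law of the iterated logarithm for Brownian motion, cf.~the proof of Lemma \ref{lem:contST}) that the time-space process enters the interior of the stopping region immediately after first touching $\DD_T$, rather than spending positive time on $\partial\CC_T$; without this, a path could conceivably skim $\partial\CC_T$ from $(t,x)$ while remaining strictly in $\CC_T$ from $(t_h,x)$.
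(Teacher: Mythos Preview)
Your proof is correct. The monotonicity argument and the Case~1 sandwich are exactly as in the paper. For Case~2 you take a slightly different route from the paper's own argument: you invoke \eqref{st-interior} to guarantee that immediately after $\tau$ the time--space process sits in the \emph{open} set $\DD^\circ_T$, and then use openness to absorb the perturbation $t_h\to t$. The paper instead argues by contradiction directly with the boundaries: assuming $\tau_\infty>\tau_*$ on a set of positive probability, it shows via monotonicity of $b_+$ that the Brownian increment $B_{\tau_*+s}-B_{\tau_*}$ would have to remain nonpositive on a right neighbourhood of $0$, contradicting the law of the iterated logarithm. Your approach is arguably tidier because it recycles \eqref{st-interior} (which already packages the LIL input) and mirrors the proof of Lemma~\ref{lem:contST}; the paper's argument is more self-contained for this particular lemma but in substance re-proves the same LIL-based fact in situ.
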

\vspace{+5pt}

A simple observation follows from Proposition \ref{prop:V}, that is
\begin{align}\label{eq:sm-fit0}
\sup_{[0,T]\times\RR}\big|V_x(t,x)\big|\le L_G,
\end{align}
with $L_G$ independent of $T$.
Next we establish refined bounds for $V_x$ at the optimal boundaries. The proof of the next proposition is in appendix.

\begin{prop}\label{prop:sm-fit} 
For any $t\in[0,T)$ and for $x:=b_+(t)<+\infty$ one has 
\begin{align}\label{Vx-bound00}
G'(x)\le V_x(t,x-)\le G'(x-).
\end{align}
For any $t\in[0,T)$ and for $x:=-b_-(t)>-\infty$ one has 
\begin{align}\label{Vx-bound01}
G'(x)\le V_x(t,x+)\le G'(x-).
\end{align}
\end{prop}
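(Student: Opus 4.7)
The plan is to prove \eqref{Vx-bound00}; the bound \eqref{Vx-bound01} follows by entirely symmetric arguments. Fix $t \in [0,T)$ with $x := b_+(t) < +\infty$. First I would observe that on $\CC_T$ the PDE \eqref{eq:pde01} combined with $V_t \le 0$ (Proposition \ref{prop:V}) gives $V_{xx} = -2V_t \ge 0$, so $V(t,\cdot)$ is convex on the slice $(-b_-(t),b_+(t))$. Together with the Lipschitz bound $|V_x|\le L_G$ this guarantees that the left derivative $V_x(t,x-):=\lim_{y\uparrow x}V_x(t,y)$ exists and is finite.

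The upper bound $V_x(t,x-)\le G'(x-)$ follows from the obstacle $V\ge G$ together with the contact condition $V(t,x)=G(x)$ (valid since $x\in\DD_T$): for $\eps>0$ small one has
\[
V(t,x)-V(t,x-\eps)\;\le\;G(x)-G(x-\eps)\;=\;\int_{x-\eps}^{x}G'(z)\,dz,
\]
and dividing by $\eps$ and letting $\eps\downarrow 0$ gives the claim; here I use that $F_\nu$ is constant on $[a_+,\infty)\ni x$, so $G'=2(F_\nu-F_\mu)$ is non-increasing in a left neighbourhood of $x$ with left limit $G'(x-)$.

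For the lower bound $V_x(t,x-)\ge G'(x)$ the plan is to use a ``same-stopping-time'' comparison. Let $\tau^\eps:=\tau_*(t,x-\eps)$ be the minimal optimal stopping time of Theorem \ref{thm:OS}. Then $V(t,x-\eps)=\EE\,G(x-\eps+B_{\tau^\eps})$, whereas admissibility of $\tau^\eps$ at $(t,x)$ gives $V(t,x)\ge\EE\,G(x+B_{\tau^\eps})$. Absolute continuity of $G$ yields
\[
\frac{V(t,x)-V(t,x-\eps)}{\eps}\;\ge\;\EE\!\left[\frac{1}{\eps}\int_0^\eps G'\!\left(x-\eps+r+B_{\tau^\eps}\right)dr\right].
\]
By Lemma \ref{lem:contST}, $\tau^\eps\to 0$ a.s.\ as $\eps\downarrow 0$, hence $B_{\tau^\eps}\to 0$ a.s. Moreover this forces the first exit from $\CC_T$ to occur through the upper boundary for a.e.\ $\omega$ and all sufficiently small $\eps$, since exit through $-b_-$ or at the terminal time $T-t$ would require a macroscopic displacement within a vanishing time interval. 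On this good event, after a change of variable the integration range equals $[b_+(t+\tau^\eps),\,b_+(t+\tau^\eps)+\eps]$, whose left endpoint lies in $(-\infty,x]$ and whose right endpoint tends to $x$ by right-continuity of $b_+$. Splitting at $x$ and using that $G'$ is non-increasing in a neighbourhood of $x$ (so $G'\ge G'(x-)\ge G'(x)$ to the left of $x$) together with right-continuity of $G'$ at $x$, a Fatou-type argument shows that the liminf of the inner average is at least $G'(x)$.

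The main technical obstacle is the precise treatment of $B_{\tau^\eps}$: both excluding the ``bad'' exit event with the correct rate (controlled by Doob's maximal inequality and $\EE\tau^\eps\to 0$ via dominated convergence) and, on the good event, handling the possibly asymmetric behaviour of $G'$ around $x$ when $\mu$ has an atom there. Right-continuity of $b_+$ (Theorem \ref{thm:OS}), the uniform bound $|G'|\le L_G$, and Lemma \ref{lem:contST} are the key inputs; Fatou's lemma then closes the argument.
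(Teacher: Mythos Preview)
Your approach is correct and mirrors the paper's proof closely: the upper bound is obtained identically, and for the lower bound both you and the paper compare $V(t,x)-V(t,x-\eps)$ with $\EE[G(B^x_{\tau^\eps})-G(B^{x-\eps}_{\tau^\eps})]$ (the paper reaches this via the supermartingale property of $V$ rather than the bare inequality $V(t,x)\ge\EE\,G(B^x_{\tau^\eps})$, and isolates the upper-exit event with an auxiliary hitting time $\tau_{a_-}$ and a time cutoff $\delta$), then uses $\tau^\eps\to 0$ from Lemma~\ref{lem:contST} together with monotonicity and right-continuity of $G'$ near $x$. One small correction: on the good event the integration interval is $[x-\eps+B_{\tau^\eps},\,x+B_{\tau^\eps}]$, and its left endpoint satisfies $b_+(t+\tau^\eps)\le x-\eps+B_{\tau^\eps}\le b_+((t+\tau^\eps)-)\le x$ rather than equalling $b_+(t+\tau^\eps)$ exactly (jumps of $b_+$ allow overshoot), but since you only need the left endpoint to lie in $(-\infty,x]$ this does not affect your argument.
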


There are two straightforward corollaries to the above result which will be useful later in the paper. The first corollary uses that $G'$ is continuous at $x\in\RR\setminus[-a_-,a_+]$ if $\mu(\{x\})=0$.
\begin{coroll}\label{cor:smf1}
If $\mu(\{\pm b_\pm(t)\})=0$ then $V_x(t,\,\cdot\,)$ is continuous at $\pm b_\pm(t)$ so that
\begin{align*} 
V_x(t,\pm b_\pm(t))=G'(\pm b_\pm(t)).
\end{align*}
\end{coroll}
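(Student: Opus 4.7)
The plan is to combine Proposition \ref{prop:sm-fit} with the observation that, under the hypothesis $\mu(\{\pm b_\pm(t)\})=0$, the derivative $G'$ is actually continuous at $\pm b_\pm(t)$. Recall that $G'(x)=2(F_\nu(x)-F_\mu(x))$, so the jumps of $G'$ at a point $x$ arise from atoms of $\nu$ and $\mu$ at $x$. I first verify that $F_\nu$ has no atom at $b_+(t)$: by Theorem \ref{thm:OS-b} we have $b_+(t)\ge a_+$, and if equality holds then $\nu(\{a_+\})=0$ by item iv) of that theorem; otherwise $b_+(t)>a_+$ and $\nu(\{b_+(t)\})=0$ since $\supp\nu\subseteq[-a_-,a_+]$. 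Therefore $F_\nu$ is continuous at $b_+(t)$, and the only possible jump of $G'$ at $b_+(t)$ is $G'(b_+(t))-G'(b_+(t)-)=-2\mu(\{b_+(t)\})$.

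Given the assumption $\mu(\{b_+(t)\})=0$, we conclude $G'(b_+(t)-)=G'(b_+(t))$. Plugging this into Proposition \ref{prop:sm-fit} gives
\begin{align*}
G'(b_+(t))\le V_x(t,b_+(t)-)\le G'(b_+(t)-)=G'(b_+(t)),
\end{align*}
so the left-hand derivative equals $G'(b_+(t))$. For the right-hand derivative, we use that $(t,x)\in\DD_T$ for every $x>b_+(t)$, so $V(t,x)=G(x)$ on $[b_+(t),+\infty)$, and consequently $V_x(t,b_+(t)+)=G'(b_+(t)+)$; since $F_\mu$ and $F_\nu$ are right-continuous the map $G'$ is right-continuous, hence $G'(b_+(t)+)=G'(b_+(t))$. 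Combining the two sides yields $V_x(t,b_+(t)-)=V_x(t,b_+(t)+)=G'(b_+(t))$, which is the desired continuity.

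The argument for $-b_-(t)$ is symmetric: $F_\nu$ is continuous at $-b_-(t)$ by the same reasoning (using the symmetric version of iv)), so $\mu(\{-b_-(t)\})=0$ forces continuity of $G'$ at $-b_-(t)$. Proposition \ref{prop:sm-fit} then pins down $V_x(t,-b_-(t)+)=G'(-b_-(t))$, while $V=G$ on $(-\infty,-b_-(t)]$ gives $V_x(t,-b_-(t)-)=G'(-b_-(t)-)=G'(-b_-(t))$ by the just-established continuity of $G'$. There is no real obstacle here: the corollary is essentially a one-line combination of Proposition \ref{prop:sm-fit}, the explicit form of $G'$, and item iv) of Theorem \ref{thm:OS-b}; the only thing that requires attention is checking that $\nu$ cannot contribute a discontinuity of $G'$ at $\pm b_\pm(t)$, which is handled by the case split $b_+(t)=a_+$ versus $b_+(t)>a_+$.
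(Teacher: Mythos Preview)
Your proof is correct and follows essentially the same approach as the paper, which simply remarks that $G'$ is continuous at $x\in\RR\setminus[-a_-,a_+]$ whenever $\mu(\{x\})=0$ and then applies Proposition~\ref{prop:sm-fit}. Your version is in fact more careful: you explicitly treat the edge case $b_+(t)=a_+$ via item~iv) of Theorem~\ref{thm:OS-b} to rule out a $\nu$-atom, and you spell out the matching of the one-sided derivative on the stopping side using $V=G$ there; the paper leaves both of these implicit.
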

\noindent The next corollary follows by observing that, since $\mu(\{\pm\infty\})=\nu(\{\pm\infty\})=0$, then 
\begin{align*}
\lim_{x\to\pm\infty}G'(x)=\lim_{x\to\pm\infty}G'(x-)=0.
\end{align*}
Here we use the notation $V^T$ and $b_\pm^T$ for the value function \eqref{eq:V} and the corresponding optimal boundaries.
\begin{coroll}\label{cor:smf2}
Let $b_T:= b^T_+(0)$, then if $b_T<+\infty$ for all $T>0$, it holds
\begin{align*}
\lim_{T\to\infty}b_T=+\infty \implies \lim_{T\to\infty}|V^T_x(t, b_T-)-G'(b_T-)|=0.
\end{align*}
\noindent On the other hand letting $c_T:= -b^T_-(0)$, then if $c_T>-\infty$ for all $T>0$, it holds
\begin{align*}
\lim_{T\to\infty}c_T=-\infty \implies \lim_{T\to\infty}|V^T_x(t, c_T+)-G'(c_T)|=0.
\end{align*}
\end{coroll}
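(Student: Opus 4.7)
The plan is to invoke Proposition \ref{prop:sm-fit} applied to each $V^T$ at time $0$ and at the point $b_T = b^T_+(0)$, which is finite by hypothesis. This yields the two-sided bound
\begin{align*}
G'(b_T) \le V^T_x(0, b_T-) \le G'(b_T-),
\end{align*}
and subtracting $G'(b_T-)$ gives
\begin{align*}
G'(b_T) - G'(b_T-) \le V^T_x(0, b_T-) - G'(b_T-) \le 0.
\end{align*}

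Next I would observe that the assumption $b_T \to +\infty$ together with $G'(x) = 2(F_\nu(x) - F_\mu(x))$ and the fact that $\mu$ and $\nu$ have no atom at $+\infty$ forces $F_\mu(x), F_\nu(x) \to 1$ as $x \to +\infty$, so that both $G'(b_T)$ and $G'(b_T-)$ converge to $0$ along $T \to \infty$ (the left limits of the cumulative distributions also tend to $1$). A squeeze then delivers $V^T_x(0, b_T-) - G'(b_T-) \to 0$, which is the claim. The second half of the corollary, concerning $c_T = -b^T_-(0) \to -\infty$, follows by the mirror argument: Proposition \ref{prop:sm-fit} gives $G'(c_T) \le V^T_x(0, c_T+) \le G'(c_T-)$, and $G'(x) \to 0$ as $x \to -\infty$ by the same reasoning applied to $F_\mu, F_\nu \to 0$.

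There is no substantial obstacle here: the corollary is a direct sandwich estimate built on top of Proposition \ref{prop:sm-fit}, and the sentence immediately preceding the statement already announces precisely this strategy. The only mild interpretive point is the appearance of $t$ in the display $V^T_x(t, b_T-)$—the natural reading is $t = 0$, consistent with the definition $b_T := b^T_+(0)$, and that is what I would use; however the same argument would go through verbatim with $b^T_+(t)$ in place of $b_T$ for any fixed $t \in [0, T)$, since Proposition \ref{prop:sm-fit} is valid at every such $t$.
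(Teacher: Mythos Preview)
Your proposal is correct and matches the paper's approach exactly: the paper's proof is just the one-sentence observation preceding the statement, namely that $\mu(\{\pm\infty\})=\nu(\{\pm\infty\})=0$ forces $G'(x)\to0$ and $G'(x-)\to0$ as $x\to\pm\infty$, after which the sandwich from Proposition~\ref{prop:sm-fit} yields the claim. Your reading of the stray $t$ as $t=0$ is the right one, consistent with the definition $b_T:=b^T_+(0)$ and with the only place the corollary is invoked (Case~1 in the proof of Theorem~\ref{thm:sk}, where $\Delta^-_T=U_x(0,-s_-(T)+)$).
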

\noindent In the lemma below we characterise the behaviour of \eqref{Vx-bound00} and \eqref{Vx-bound01} as $t\to T$ for a fixed $T>0$ (with $V=V^T$ and $b_\pm=b^T_\pm$). The proof is given in appendix.
\begin{lemma}\label{lem:sm-fit2}
For fixed $T>0$ one has
\begin{itemize}
\item[ (i)] If $\mu(\{\hat b_+\})>0$ and/or $\mu(\{-\hat b_-\})>0$, then
\begin{align}
\label{Vx-T00} &\lim_{t\to T}V_x(t,b_+(t)-)=G'(\hat{b}_+-)\quad\textrm{and/or}\quad\lim_{t\to T}V_x(t,-b_-(t)+)=G'(-\hat{b}_-),
\end{align}
\item[(ii)] If $\mu(\{\hat b_+\})=0$ and/or $\mu(\{-\hat b_-\})=0$, then
\begin{align}
\label{Vx-T01} &\lim_{t\to T}V_x(t,b_+(t)-)=G'(\hat{b}_+)\quad\textrm{and/or}\quad\lim_{t\to T}V_x(t,-b_-(t)+)=G'(-\hat{b}_--).
\end{align}
\end{itemize}
\end{lemma}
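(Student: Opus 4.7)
The plan is to treat the two cases separately, leaning on the sandwich $G'(b_+(t)) \le V_x(t, b_+(t)-) \le G'(b_+(t)-)$ from Proposition \ref{prop:sm-fit}, on $b_+(T-) = \hat b_+$ from Theorem \ref{thm:OS}, and, in case (i), on the pinning $b_+(t) \equiv \hat b_+$ for $t$ in a left-neighbourhood of $T$ provided by Lemma \ref{lem:flatb}. I only discuss the upper boundary; the lower one is handled by the symmetric argument.

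Case (ii) is short: the assumption $\mu(\{\hat b_+\}) = 0$, together with (D.2) (which rules out $\hat b_+ = a_+$ carrying a $\mu$-atom), makes $G'$ continuous at $\hat b_+$, so both sides of the sandwich converge to $G'(\hat b_+)$ as $b_+(t) \downarrow \hat b_+$. More precisely, $F_\mu(\hat b_+) \le F_\mu(b_+(t)-) \le F_\mu(b_+(t))$ and the outer terms both converge to $F_\mu(\hat b_+)$ by right-continuity.

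Case (i) is the substantive one, since now $G'(\hat b_+-) - G'(\hat b_+) = 2\mu(\{\hat b_+\}) > 0$ and the sandwich does not close. The plan is to study $W := V - G$ on the rectangle $R := [t_+, T] \times [a_+, \hat b_+]$, where $t_+$ is given by Lemma \ref{lem:flatb} and $\hat b_+ > a_+$ thanks to (D.2). The structural observation is that by (D.1) neither $\mu$ nor $\nu$ charges $(a_+, \hat b_+)$, so $G$ is affine on $[a_+, \hat b_+]$; hence $W$ is a non-negative bounded $C^{1,2}$ solution of $W_t + \tfrac{1}{2} W_{xx} = 0$ in the interior of $R$ that vanishes on the right edge $[t_+, T] \times \{\hat b_+\}$ (by the pinning) and on the terminal edge $\{T\} \times [a_+, \hat b_+]$ (from $V(T, \cdot) = G(\cdot)$). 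It\^o's formula (whose applicability is ensured by $|W_x| \le 2 L_G$ on the interior of $R$ from Proposition \ref{prop:V}) combined with optional stopping at the first exit time $\tau_R$ from $R$ yields the Feynman--Kac identity
\begin{align*}
W(t, y) = \EE_y \bigl[ W(t + \tau_R, a_+)\mathds{1}_{\{y + B_{\tau_R} = a_+\}} \bigr] \le \frac{\hat b_+ - y}{\hat b_+ - a_+}\, \sup_{s \in [t, T]} W(s, a_+),
\end{align*}
the last inequality being a gambler's-ruin bound on the probability that the rectangle is exited through the left edge. Continuity of $V$ and $W(T, a_+) = 0$ give $\eps(t) := \sup_{s \in [t, T]} W(s, a_+) \to 0$ as $t \to T$. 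Dividing the displayed bound by $\hat b_+ - y$ and letting $y \uparrow \hat b_+$ produces $|W_x(t, \hat b_+-)| \le \eps(t)/(\hat b_+ - a_+) \to 0$, and since $G' \equiv G'(\hat b_+-)$ on $(a_+, \hat b_+)$ this forces $V_x(t, \hat b_+-) = G'(\hat b_+-) + W_x(t, \hat b_+-) \to G'(\hat b_+-)$, as required.

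The delicate step, I expect, is the probabilistic estimate on $W$: neither the elementary It\^o--Tanaka bound $W(t, y) \le 2(\hat b_+ - y)$ nor a direct Gaussian estimate on $\EE_y L^z_{T-t}$ yields both factors $(\hat b_+ - y)$ and $\eps(t) \to 0$ simultaneously. The argument goes through only because the affineness of $G$ on $(a_+, \hat b_+)$, which is a consequence of (D.1), turns $W$ into a genuine caloric function on $R$, so that the Feynman--Kac representation together with gambler's ruin automatically delivers the product structure $(\hat b_+ - y)\cdot \eps(t)$ that is needed to pin the limit down to the upper end $G'(\hat b_+-)$ of the Proposition \ref{prop:sm-fit} sandwich.
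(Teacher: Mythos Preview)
Your argument for case (i) is correct and is, in fact, a cleaner packaging of the same idea as the paper's proof. Both proofs use the pinning $b_+(t)\equiv\hat b_+$ on $[t_+,T)$ from Lemma~\ref{lem:flatb}, a gambler's-ruin bound on the probability of reaching the left side of the strip $(a_+,\hat b_+)$ before the right side, and a quantity that vanishes as $t\to T$. The difference is methodological: the paper writes $V(t,\hat b_+)-V(t,\hat b_+-\eps)$ via It\^o--Tanaka--Meyer as $G(\hat b_+)-G(\hat b_+-\eps)-\int_\RR\EE_{t,\hat b_+-\eps}[L^z_{\tau_*}](\nu-\mu)(dz)$, picks an intermediate $a\in(a_+,\hat b_+)$, and bounds the $\nu$-part of the local-time integral using strong Markov and $\PP_{t,\hat b_+-\eps}(\tau_a<\tau_{\hat b_+})=\eps/(\hat b_+-a)$, together with $g(t):=\int_\RR\sup_{t\le s\le T}\EE_{s,a}[L^z_{\tau_*}]\nu(dz)\to 0$. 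You instead observe that $G$ is affine on $(a_+,\hat b_+)$, so $W=V-G$ is caloric on $R$, and apply Feynman--Kac directly with $a_+$ as the left barrier and $\eps(t)=\sup_{[t,T]}W(\cdot,a_+)\to 0$. Your route avoids local times altogether and delivers the product structure $(\hat b_+-y)\cdot\eps(t)$ in one step; the paper's route is more hands-on but yields the same bound.

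Your case (ii), however, has a small gap. You claim that $\mu(\{\hat b_+\})=0$ forces $G'$ to be continuous at $\hat b_+$, but this is false when $\hat b_+=a_+$ and $\nu(\{a_+\})>0$, since then $F_\nu$ jumps at $\hat b_+$. The paper handles this subcase by invoking part (iv) of Theorem~\ref{thm:OS-b}: if $\nu(\{a_+\})>0$ then $b_+(t)>a_+$ strictly for all $t<T$, so $b_+(t)\downarrow\hat b_+$ strictly from above and right-continuity of $G'$ gives $G'(b_+(t))\to G'(\hat b_+)$ and $G'(b_+(t)-)\to G'(\hat b_+)$, closing the sandwich. Your ``more precisely'' sentence only tracks $F_\mu$ and does not address this possible $\nu$-atom; you should add this subcase explicitly.
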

\noindent Notice that $G'(-\hat{b}_--)<G(-\hat b_-)$ if $\hat b_-=a_-$ and $\nu(\{a_-\})>0$. Hence the notation in \eqref{Vx-T00} and \eqref{Vx-T01} is necessary.

To conclude our series of technical results concerning fine properties of $V_x$, we present a final lemma whose proof is also provided in appendix. Such result will be needed in the proof of Lemma \ref{lemma:Vt-meas} below when dealing with target measures $\mu$ supported on the positive (resp.~negative) half line. 
\begin{lemma}\label{lem:Vx-asympt}
If $\supp\{\mu\}\cap\RR_+=\emptyset$ (resp.~$\supp\{\mu\}\cap\RR_-=\emptyset$) then
\begin{align*}
\lim_{y\to+\infty}\,\,\sup_{0\le t\le T}\,\,\big|V_x(t,y)\big|=0\quad(\text{resp.}\, \lim_{y\to-\infty}\,\,\sup_{0\le t\le T}\,\,\big|V_x(t,y)\big|=0).
\end{align*}
\end{lemma}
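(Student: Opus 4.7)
The key observation is that $\supp\{\mu\}\cap\RR_+=\emptyset$ forces $F_\mu\equiv 1$ on $[0,\infty)$, whence for $z\ge 0$
\begin{equation*}
G'(z)=2(F_\nu(z)-1)=-2(1-F_\nu(z))
\end{equation*}
and $|G'(z)|\to 0$ as $z\to+\infty$ since $\nu$ has no atom at $+\infty$. Moreover Theorem \ref{thm:OS-b}(ii) yields $b_+\equiv+\infty$, so that $(t,y)\in\CC_T$ for all $t\in[0,T)$ and $y$ sufficiently large; at such points $V\in C^{1,2}$ and $V_x(t,y)$ exists in the classical sense. At the terminal time $V(T,\,\cdot\,)=G$, so $V_x(T,y)=G'(y)$ wherever defined and this already tends to $0$ at $+\infty$. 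The task is thus reduced to bounding the difference quotients of $V(t,\,\cdot\,)$ uniformly in $t\in[0,T)$.

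For the main estimate fix $y$ large, $h>0$ small, and denote $\tau^{(1)}:=\tau_*(t,y)$ and $\tau^{(2)}:=\tau_*(t,y+h)$ the optimal stopping times from Theorem \ref{thm:OS}. Using each as a sub-optimal control in the other problem one gets
\begin{equation*}
|V(t,y+h)-V(t,y)|\le\max_{i=1,2}\EE\big|G(y+h+B_{\tau^{(i)}})-G(y+B_{\tau^{(i)}})\big|.
\end{equation*}
Since $G$ is absolutely continuous with a.e.-derivative $2(F_\nu-F_\mu)$, Fubini yields
\begin{equation*}
|V(t,y+h)-V(t,y)|\le\int_0^h\sup_{\tau\le T}\EE\big|G'(y+u+B_\tau)\big|\,du,
\end{equation*}
so the problem reduces to a decay estimate for $\EE|G'(y+u+B_\tau)|$ as $y\to+\infty$, uniform over all stopping times $\tau\le T$.

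This follows by splitting on $\{B_\tau\ge -M\}$ and its complement:
\begin{equation*}
\EE\big|G'(y+u+B_\tau)\big|\le \sup_{z\ge y-M}|G'(z)|+L_G\,\PP\Big(\min_{0\le s\le T}B_s<-M\Big).
\end{equation*}
Given $\eps>0$, pick $M=M_\eps$ so that the tail probability is smaller than $\eps/(2L_G)$ (possible as Brownian motion has an a.s.-finite minimum on $[0,T]$), then $y_\eps$ so large that $\sup_{z\ge y_\eps-M}|G'(z)|<\eps/2$. For $y\ge y_\eps$ this gives $|V(t,y+h)-V(t,y)|/|h|\le\eps$ uniformly in $t\in[0,T]$ and in small $h\neq0$, hence $|V_x(t,y)|\le\eps$ at every point where $V_x$ exists, uniformly in $t$. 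The case $\supp\{\mu\}\cap\RR_-=\emptyset$, $y\to-\infty$, is obtained by the symmetric argument applied at $-\infty$, using that $F_\nu\to 0$ there and $F_\mu\equiv 0$ on $(-\infty,0]$.

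The argument is essentially routine once the two ingredients are isolated---the decay of $|G'|$ at infinity, inherited from $\nu$ having no atom at $+\infty$, and the uniform tightness of $\{B_\tau:\tau\le T\}$. The only subtle point is uniformity in $t$: this is achieved by using the two optimal stopping times $\tau^{(1)},\tau^{(2)}$ as test strategies in each other's problems, which produces an upper bound that depends only on $y$ and $M$ and not on $t$, bypassing any need to differentiate $V$ directly.
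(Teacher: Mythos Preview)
Your proof is correct and follows essentially the same approach as the paper: both reduce the problem to bounding $\EE|G'(y+B_\tau)|$ via the decay of $G'$ at $+\infty$ (inherited from $\nu(\{+\infty\})=0$) combined with a tail estimate on Brownian motion over $[0,T]$. The paper passes to the limit first to obtain the sandwich $\EE_x G'(B_{\tau_*})\le V_x(t,x)\le \EE_x G'(B_{\tau_*}-)$ and then splits on the event $\{\tau_*<T-t\}$, whereas you bound the difference quotient directly and split on $\{B_\tau\ge -M\}$; these are minor technical variations of the same argument.
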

\vspace{+6pt}

We are now going to prove that $V_t$ is continuous on $[0,T)\times\RR$.
Let us first introduce the generalised inverse of the optimal boundaries, namely let
\begin{align}\label{eq:invbarrier}
T_*(x):=
\left\{
\begin{array}{ll}
\sup\{t\in[0,T]\,:\,-b_-(t)< x \}, & x\in (-b_-(0),0)\\[+4pt]
\sup\{t\in[0,T]\,:\,b_+(t)> x\}, & x\in[0, b_+(0))\\[+4pt]
0, &\text{elsewhere}
\end{array}
\right.
\end{align}
Note that $x\in(-b_-(t),b_+(t))$ if and only if $t<T_*(x)$. Note also that $T_*$ is positive, increasing and left-continuous on $[-b_-(0),-b_-(T)]$, decreasing and right-continuous on $[b_+(T),b_+(0)]$ (hence lower semi-continuous) with $T_*(\pm b_\pm (0))=0$ if $b_\pm(0)<+\infty$.

\begin{figure}[!ht]
\label{fig:1}
\begin{center}
\includegraphics[scale=0.65]{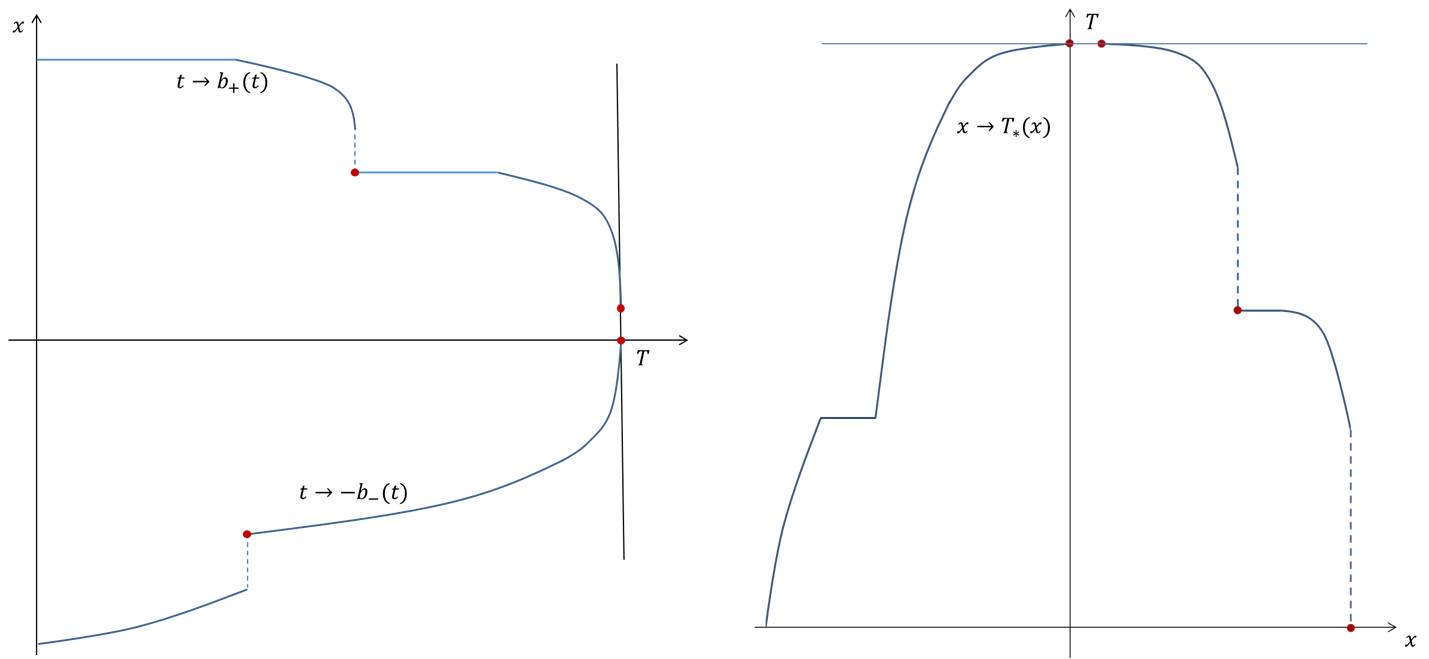}
\end{center}
\caption{A drawing of possible optimal stopping boundaries $\pm b_\pm$ (on the left) and of the corresponding generalised inverse function $T_*$ (on the right).}



\end{figure}

\begin{lemma}\label{lemma:Vt-meas}
For $h\in(0,T)$ define the measure on $\RR$
\begin{align}\label{eq:sigmah}
\sigma_h(dy):=\frac{V(T,y)-V(T-h,y)}{h}\,dy.
\end{align}
Then the family $(\sigma_h)_{h\in(0,T)}$ is a family of negative measures such that
\begin{align}\label{eq:weak}
\sigma_h(dy)\to
-\nu(dy)\quad\text{weakly as $h\to0$}
\end{align}
and $|\sigma_h(\RR)|\le L_G$ for all $h\in(0,T)$.
\end{lemma}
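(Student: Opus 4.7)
The statement has three parts: (i) $\sigma_h$ is a negative measure, (ii) $|\sigma_h(\RR)|\le L_G$, and (iii) $\sigma_h\to-\nu$ weakly. Part (i) is immediate: since $V(T,y)=G(y)$ and $V(T-h,y)\ge G(y)$ by the obstacle inequality \eqref{eq:pde03}, the density of $\sigma_h$ is non-positive.

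For (ii) my plan is to use the PDE \eqref{eq:pde01} inside $\CC_T$ together with the bounds on $V_x$ at the boundary provided by Proposition \ref{prop:sm-fit}. For each $s\in[T-h,T)$ the identity $V_t=-\tfrac12 V_{xx}$ in $\CC_T$ and the triviality $V_t\equiv 0$ in $\DD_T$ give
\begin{equation*}
\int_{\RR} V_t(s,y)\,dy\,=\,-\tfrac{1}{2}\int_{-b_-(s)}^{b_+(s)}V_{xx}(s,y)\,dy\,=\,-\tfrac{1}{2}\bigl[V_x(s,b_+(s)-)-V_x(s,-b_-(s)+)\bigr].
\end{equation*}
By Proposition \ref{prop:sm-fit}, $V_x(s,b_+(s)-)\in[G'(b_+(s)),G'(b_+(s)-)]$ and symmetrically at $-b_-(s)$; recalling $b_+(s)\ge a_+$ and $-b_-(s)\le -a_-$ one checks that $V_x(s,b_+(s)-)\in[0,L_G]$ and $V_x(s,-b_-(s)+)\in[-L_G,0]$, whence $|\int V_t(s,\cdot)\,dy|\le L_G$. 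Integrating $V_t$ in $s$ along vertical lines (using that $s\mapsto V(s,y)$ is monotone, continuous, and piecewise $C^1$ with break only at the entry time $T_*(y)$ into $\DD_T$) and applying Fubini yields
\begin{equation*}
|\sigma_h(\RR)|=\tfrac{1}{h}\Big|\int_{\RR}\!\int_{T-h}^{T}V_t(s,y)\,ds\,dy\Big|\le L_G.
\end{equation*}

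For (iii), I would apply It\^o-Tanaka-Meyer formula as in Step 2 of the proof of Theorem \ref{thm:OS-b}: with $\tau_*=\tau_*(T-h,y)\le h$ optimal,
\begin{equation*}
V(T-h,y)-G(y)=\EE_y\!\int_{\RR}L^z_{\tau_*}(\nu-\mu)(dz),
\end{equation*}
this being $0$ for $y\notin(-b_-(T-h),b_+(T-h))$. Fixing $\phi\in C_b(\RR)$ and using Fubini,
\begin{equation*}
\int_{\RR}\phi\,d\sigma_h=-\int_{\RR}M_h(z)(\nu-\mu)(dz),\qquad M_h(z):=\frac{1}{h}\int_{\RR}\phi(y)\EE_y L^z_{\tau_*}\,dy.
\end{equation*}
The bound $\EE_y L^z_{\tau_*}\le\EE_y L^z_h=\int_0^h p_s(y-z)\,ds$ (formula \eqref{eq:Eloct}) and $\int p_s(y-z)\,dy=1$ give $|M_h(z)|\le\|\phi\|_\infty$ uniformly in $h$ and $z$. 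I then claim $M_h(z)\to\phi(z)$ for every $z$ in the interior of $\supp\nu$ and $M_h(z)\to 0$ for every $z\in\supp\mu$. For the former, since $z\in(-\hat b_-,\hat b_+)$ (except in boundary cases handled via (D.2) and Lemma \ref{lem:flatb}), for $y$ in a neighbourhood of $z$ and $h$ small one has $\PP_y(\tau_*<h)\to 0$, so $\EE_y L^z_{\tau_*}\simeq\EE_y L^z_h$ and
\begin{equation*}
\tfrac{1}{h}\int\phi(y)\EE_y L^z_h\,dy=\tfrac{1}{h}\int_0^h\EE_z\phi(B_s)\,ds\longrightarrow\phi(z).
\end{equation*}
For the latter, $z\in\supp\mu$ lies outside $(-\hat b_-,\hat b_+)$ by (D.1); for small $h$, either $z$ is strictly beyond the boundary $b_+(T-h)$ (so $L^z_{\tau_*}=0$ since $B$ is stopped before ever reaching $z$), or $z=\pm\hat b_\pm$ is itself the (flat, by Lemma \ref{lem:flatb}) boundary and $B$ starts strictly inside and is stopped at its first hit of $z$, at which point the local time is still $0$. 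Dominated convergence (against the finite measures $\nu$ and $\mu$ separately, using the uniform bound $\|\phi\|_\infty$) then gives
\begin{equation*}
\int_{\RR}\phi\,d\sigma_h\longrightarrow-\int_{\RR}\phi\,d\nu.
\end{equation*}

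\textbf{Main obstacle.} The delicate step is the pointwise identification of $M_h(z)$ in the limit, because it requires controlling the discrepancy between $L^z_{\tau_*}$ and $L^z_h$ uniformly enough to justify passing $h\to 0$ inside the $y$-integral. Boundary effects are the subtle issue: when $z=\pm\hat b_\pm$ carries an atom of either $\nu$ or $\mu$, one must use (D.2) and Lemma \ref{lem:flatb} to rule out simultaneous contributions from both measures at the same boundary point, and to guarantee $M_h(z)\to 0$ on $\supp\mu$ despite $z$ being arbitrarily close to the shrinking continuation window.
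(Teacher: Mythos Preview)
Your treatment of (i) and (ii) is essentially the paper's: negativity follows from $V(T,\cdot)=G\le V(T-h,\cdot)$, and the total-mass bound comes from $V_t=-\tfrac12 V_{xx}$ in $\CC_T$ together with the boundary control on $V_x$ from Proposition~\ref{prop:sm-fit}.

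For (iii) you take a genuinely different route. The paper does \emph{not} go through the local-time representation of $V(T-h,\cdot)-G$. Instead it integrates $\int f\,d\sigma_h$ against $f\in C^2_b$, replaces $V(T,y)-V(T-h,y)$ by $\int_{T-h}^{T_*(y)}V_t(s,y)\,ds$, uses $V_t=-\tfrac12 V_{xx}$ and integrates by parts twice in $y$, so that the limit $h\to 0$ reduces to identifying $\lim_{s\uparrow T}V_x(s,\pm b_\pm(s)\mp)$. These limits are supplied by Lemma~\ref{lem:sm-fit2}, which was prepared exactly for this purpose. After undoing the integration by parts one lands on $-\int f\,d\nu$, and a density argument extends to $C_b$. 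Your approach, by contrast, writes $\int\phi\,d\sigma_h=-\int M_h(z)(\nu-\mu)(dz)$ and tries to identify the pointwise limit of $M_h$ on $\supp\nu$ and $\supp\mu$ separately.

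The gap in your argument is exactly the boundary case you flag, but it is more serious than your sketch suggests. Consider the canonical situation $\hat b_+=a_+$ with $\nu(\{a_+\})>0$ (this is allowed by (D.2) and in fact occurs for $\nu=\delta_0$ whenever $0\in\partial(\supp\mu)$). Then (iv) of Theorem~\ref{thm:OS-b} gives $b_+(t)>a_+$ for $t<T$, but $b_+(T-)=a_+$, so for $s\in[T-h,T)$ the upper boundary sits at a distance $b_+(s)-a_+\to 0$ from $z=a_+$. For $y$ near $z$ the process can hit $b_+(\cdot)$ very quickly, and after $\tau_*$ the restarted Brownian motion begins at $B_{\tau_*}=b_+(T-h+\tau_*)$, which is within $o(1)$ of $z$; hence $\EE_y[L^z_h-L^z_{\tau_*}]$ is \emph{not} uniformly small, and your approximation $\EE_y L^z_{\tau_*}\simeq \EE_y L^z_h$ breaks down precisely at a point carrying strictly positive $\nu$-mass. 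Neither (D.2) nor Lemma~\ref{lem:flatb} helps here: (D.2) only forbids a $\mu$-atom at $\hat b_\pm=a_\pm$, and Lemma~\ref{lem:flatb} concerns $\mu$-atoms, not $\nu$-atoms. The paper sidesteps this entirely by working with the boundary values of $V_x$ rather than with local-time asymptotics; Lemma~\ref{lem:sm-fit2}(ii) pins down the correct limit $V_x(t,b_+(t)-)\to G'(\hat b_+)$ (note: not $G'(\hat b_+-)$) in exactly this case, and the integration-by-parts identity then produces the indicator $\mathds{1}_{[-a_-,a_+]}$ against $G''=2(\nu-\mu)$ automatically. If you want to push your local-time route through, you would need a quantitative lower bound on $\EE_y L^{a_+}_{\tau_*}$ in this degenerate regime, which effectively requires knowing the rate at which $b_+(T-h)\downarrow a_+$; the paper's PDE argument avoids any such rate information.
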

\begin{proof}
We start by considering $\mu_\pm>0$ so that we are in the setting of $(iii)$ in Theorem \ref{thm:OS-b}. In particular fix $\overline{h}<T$ so that $b_+$ and $b_-$ are bounded on $[T-\overline{h},T]$. Hence 
\begin{align*}
\supp\{\sigma_h\}=(-b_-(T-h),b_+(T-h))\quad\text{for all $h\in(0,\overline{h})$}
\end{align*}
because $V(T-h,y\big)=G(y)=V(T,y)$ for all $y\notin(-b_-(T-h),b_+(T-h))$.

Take an arbitrary $f\in C^2_b(\RR)$, recall \eqref{eq:invbarrier} and notice that 
\begin{align*}
&V\big(T_*(y)\vee (T-h),y\big)=V(T-h,y)=G(y)\quad\text{for $y\notin (-b_-(T-h),b_+(T-h))$,}\\[+3pt]
&V\big(T_*(y)\vee (T-h),y\big)=V(T_*(y),y)=G(y)\quad\text{for $y\in(-b_-(T-h),b_+(T-h))$.}
\end{align*}
Then we have
\begin{align*}
&\int_\RR f(y)\frac{V(T,y)-V(T-h,y)}{h}dy\nonumber\\[+3pt]
=&\int_\RR f(y)\frac{V(T,y)-V\big(T_*(y)\vee (T-h),y\big)}{h}dy\nonumber\\[+3pt]
&+\int_\RR f(y)\frac{V\big(T_*(y)\vee (T-h),y\big)-V(T-h,y)}{h}dy\\[+3pt]
=&\int^{b_+(T-h)}_{-b_-(T-h)} f(y)\frac{V\big(T_*(y),y\big)-V(T-h,y)}{h}dy.
\end{align*}
Thanks to continuity of $V$ all the integrals above are understood as integrals on open intervals, i.e.
\begin{align}\label{int-open}
\int_{-b_-(s)}^{b_+(s)}\ldots dy=\int_{\big(-b_-(s),b_+(s)\big)}\ldots dy.
\end{align}

We now recall that $V_t$ is continuous in $\CC_T$ and $V_t=-\tfrac{1}{2}V_{xx}$ in $\CC_T$. Then we use Fubini's theorem, integration by parts and \eqref{eq:pde02} to obtain
\begin{align}\label{eq:weak01}
\int^{b_+(T-h)}_{-b_-(T-h)} & f(y)\frac{V\big(T_*(y),y\big)-V(T-h,y)}{h}dy\\[+3pt]
=&\frac{1}{h}\int^{b_+(T-h)}_{-b_-(T-h)} f(y)\int^{T_*(y)}_{T-h}V_t(s,y)ds\,dy\nonumber\\[+3pt]
=&-\frac{1}{2h}\int_{T-h}^T\int_{-b_-(s)}^{b_+(s)}f(y)V_{xx}(s,y)dy\,ds\nonumber\\[+3pt]
 =&-\frac{1}{2h}\int_{T-h}^T\left[\big(f(\,\cdot\,) V_x(s,\,\cdot\,)-f'(\,\cdot\,)G(\,\cdot\,)\big|^{b_+(s)}_{-b_-(s)}+\int_{-b_-(s)}^{b_+(s)}f''(y)V(s,y)dy\right]\,ds.\nonumber
\end{align}
Notice that due to \eqref{int-open} we have 
\begin{align}\label{lim-fVx}
f(\,\cdot\,) V_x(s,\,\cdot\,)\big|^{b_+(s)}_{-b_-(s)}:=f(b_+(s)) V_x(s,b_+(s)-)-f(-b_-(s)) V_x(s,-b_-(s)+).
\end{align}

Since we are interested in the limit of the above expressions as $h\to0$ it is useful to recall Lemma \ref{lem:sm-fit2}. For simplicity we only illustrate in full details the case $\mu(\{\hat b_+\})>0$, $a_-=\hat b_-$ and $\nu(\{-a_-\})>0$ but all the remaining cases can be addressed with the same method.

Because of $\mu(\{\hat b_+\})>0$ then $a_+<\hat b_+$ (Assumption D.2) and we use (i) of Lemma \ref{lem:sm-fit2}; on the other hand for $a_-=\hat b_-$ and $\nu(\{-a_-\})>0$ we use (ii) of the same lemma. From \eqref{lim-fVx} we have
\begin{align}\label{eq:lim0}
&\lim_{s\to T}f(\,\cdot) V_x(s,\,\cdot)\big|^{b_+(s)}_{-b_-(s)}
=f(\hat{b}_+)G'(\hat{b}_+-)-f(-\hat{b}_-)G'(-a_--).
\end{align}
We take limits in \eqref{eq:weak01} as $h\to 0$, use \eqref{eq:lim0} and undo the integration by parts to obtain
\begin{align}\label{eq:weak02}
\lim_{h\to0}&\int_\RR f(y)\sigma_h(dy)\nonumber\\[+3pt]
=&-\frac{1}{2}\Big[(f G')(\hat{b}_+-)-(fG')(-a_--)-(f'G)(\hat{b}_+)+(f'G)(-a_-)+\int_{-a_-}^{\hat b_+}f''(y)G(y)dy\Big]\nonumber\\[+3pt]
=&-\frac{1}{2}\int_\RR\mathds{1}_{[-a_-,a_+]}(y)f(y)G''(dy)=-\int_\RR f(y)\nu(dy).
\end{align}
Notice that in the penultimate equality we have used that $G'(-a_-)-G'(-a_--)=2\nu(\{-a_-\})$ and 
\begin{align*}
\mathds{1}_{[-a_-,\hat b_+)}G''(dy)=2\mathds{1}_{[-a_-,a_+]}\nu(dy)
\end{align*}
(recall that $\mu((-\hat{b}_-,\hat{b}_+))=0$ and $\mu(\{-a_-\})=0$). It is important to remark that it is thanks to the fine study performed in Lemma \ref{lem:sm-fit2} that we obtain exactly the indicator of $[-a_-,a_+]$ in \eqref{eq:weak02}.

To show that $\sigma_h$ is finite on $\RR$ it is enough to take $f\equiv1$ in \eqref{eq:weak01} and notice that
\begin{align*}
\sigma_h(\RR)=&-\frac{1}{2h}\int^T_{T-h}\big(V_x(s,b_+(s))-V_x(s,-b_-(s))\big)ds\quad\text{for all $h\in(0,\overline{h})$.}
\end{align*}
From the last expression and \eqref{eq:sm-fit0} it immediately follows that $|\sigma_h(\RR)|\le L_G$.

In \eqref{eq:weak02} we have not proven weak convergence of $\sigma_h$ to $-\nu$ yet but this can now be done easily. In fact any $g\in C_b(\RR)$ can be approximated by a sequence $(f_k)_k\subset C^2_b(\RR)$ uniformly converging to $g$ on any compact. In particular, for a compact $A\supseteq \supp\{\sigma_{\overline h}\}$, and for any $\eps>0$ we can always find $K_\eps>0$ such that $\sup_A |f_k-g|\le \eps$ for all $k\ge K_\eps$. Since $\supp\{\nu\}\subseteq \supp\{\sigma_h\}\subseteq A$ for all $h\in(0,\overline h)$, the previous results give
\begin{align*}
\lim_{h\to0}\Big|\int_\RR g(y)\big(\sigma_h+\nu)(dy)\Big|&\le \lim_{h\to0}\eps\big(\big|\sigma_h(\RR)\big|+\nu(\RR)\big)+\lim_{h\to0} \Big|\int_\RR f_k(y)\big(\sigma_h+\nu)(dy)\Big|\\
&\le(1+L_G)\eps\nonumber
\end{align*}
for all $k\ge K_\eps$. Since $\eps>0$ is arbitrary \eqref{eq:weak} holds.

We now consider the case $\supp\{\mu\}\cap\RR_+=\emptyset$, i.e.~$\mu_+=-\hat{b}_-$, and $b_+(\cdot)\equiv+\infty$. Using Lemma \ref{lem:Vx-asympt} we can repeat step by step the calculations above to obtain \eqref{eq:weak02} with $\hat{b}_+=+\infty$ for any $f\in C^2_b(\RR)$ such that $f(x)\to0$ and $f'(x)G(x)\to0$ as $x\to\infty$. So it only remains to prove that the density argument holds. For that we observe that by Lemma \ref{lem:Vx-asympt} one has
\begin{align}\label{ext1}
\sigma_h\big([x,+\infty)\big)=-\frac{1}{2h}\int^T_{T-h}\int_x^\infty V_{xx}(s,y)dy\,ds=\frac{1}{2h}\int^T_{T-h}V_{x}(s,x)ds,\quad x>a_+\,,
\end{align} 
and moreover for any $\eps>0$ there exists $x_\eps>0$ such that $\big|\sigma_h\big([x,+\infty)\big)\big|\le \eps/2$ for all $x>x_\eps$. With no loss of generality we may assume that also $\nu([x_\eps,+\infty))\le \eps/2$ because $\nu$ puts no mass at infinity. Setting $A_\eps= [-b_-(T-\overline{h}),x_\eps]$, we can find a sequence $(f_k)_k\subset C^2_b(\RR)$ with $f_k(x)\to0$ and $f_k'(x)G(x)\to0$ as $x\to\infty$, and a number $K_\eps>0$ such that $\sup_{A_\eps} |f_k-g|\le \eps$ for all $k\ge K_\eps$. With no loss of generality we may also assume $\|f_k\|_\infty\le c$ for all $k$ and a given $c>0$. This gives
\begin{align*}
\Big|\int_\RR g(y)\big(\sigma_h+\nu)(dy)\Big|\le& \Big|\int_{\RR} (g-f_k)(y)\big(\sigma_h+\nu)(dy)\Big|+\Big|\int_{\RR} f_k(y)(\sigma_h+\nu)(dy)\Big|\\[+3pt]
\le &\eps \big(1+|\sigma_h(\RR)|\big)+\|g-f_k\|_{\infty}\big|(\sigma_h+\nu)([x_\eps,+\infty))\big|\\[+3pt]
&+\Big|\int_{\RR} f_k(y)(\sigma_h+\nu)(dy)\Big|.
\end{align*}
In the limit as $h\to0$ we find 
\begin{align*}
\lim_{h\to0}\Big|\int_\RR g(y)\big(\sigma_h+\nu)(dy)\Big|&\le \eps\big(1+|\sigma_h(\RR)|+\|g\|_{\infty}+c\big)
\end{align*}
and the claim follows by arbitrariness of $\eps$. The case $\supp\{\mu\}\cap\RR_-=\emptyset$ can be addressed by similar arguments and we omit the proof for brevity.
\end{proof}

Let us denote
\begin{align}\label{def:transd}
p(t,x,s,y):=\frac{1}{\sqrt{2\pi(s-t)}}e^{-\frac{(x-y)^2}{2(s-t)}},\quad\text{for $t<s$, $x,y\in\RR$}
\end{align}
the Brownian motion transition density. We can now give the main result of this section.
\begin{prop}\label{prop:cont-Vt}
It holds $V_t\in C([0,T)\times\RR)$.
\end{prop}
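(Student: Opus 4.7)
The domain $[0,T) \times \RR$ decomposes into the open continuation set $\CC_T$, the open interior $\DD_T^\circ$ of the stopping set, and the lateral boundary $\partial\CC_T \cap \{t<T\}$. In $\CC_T$ we have $V \in C^{1,2}$ solving $V_t = -\tfrac{1}{2} V_{xx}$, so $V_t$ is continuous there; in $\DD_T^\circ$ we have $V \equiv G$ and hence $V_t \equiv 0$. Moreover $V_t \le 0$ globally by Proposition \ref{prop:V}, which together with $V_t = 0$ on $\DD_T^\circ$ already yields upper semicontinuity of $V_t$ across the lateral boundary for free. The remaining task is: for $(t_0, x_0) \in \partial\CC_T \cap \{t<T\}$ and any sequence $(t_n, x_n) \in \CC_T$ with $(t_n, x_n) \to (t_0, x_0)$, to show that $V_t(t_n, x_n) \to 0$.

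My plan is to produce a probabilistic representation of $V_t$ in $\CC_T$ which manifestly degenerates as one approaches $\partial\CC_T$. Mimicking the integration-by-parts computation that drives the proof of Lemma \ref{lemma:Vt-meas}, but carried out on an interior slab $[t, t+h] \subset (0, T)$ rather than on the terminal slab $[T-h, T]$, and testing against $\psi \in C_c^\infty(\RR)$, one arrives at an identity of the form
\[
\int_\RR \psi(y)\,\frac{V(t+h, y) - V(t, y)}{h}\,dy = -\frac{1}{2h}\int_t^{t+h}\Bigl[ \bigl(\psi V_x - \psi' G\bigr)\Big|_{-b_-(s)}^{b_+(s)} + \int_{-b_-(s)}^{b_+(s)} \psi''(y) V(s, y)\, dy\Bigr] ds.
\]
Passing to the limit $h \downarrow 0$ and using right-continuity of $b_\pm$ together with the boundary behavior of $V_x$ supplied by Proposition \ref{prop:sm-fit}, Corollary \ref{cor:smf1} and Lemma \ref{lem:sm-fit2}, one identifies $V_t(t, \cdot)$ inside $\CC_T(t) = (-b_-(t), b_+(t))$ as the density of an absolutely continuous measure admitting the representation
\[
V_t(t, x) = -\int_\RR p^{\CC_T}\bigl((t, x), (T, y)\bigr)\, \nu(dy),
\]
where $p^{\CC_T}$ is the transition kernel of the time-space Brownian motion $(s, B_s)$ killed upon first exit from $\CC_T$.

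Granted this representation, continuity is transparent: $p^{\CC_T}((t, x), (T, y))$ is jointly continuous in $(t, x)$ at interior points of $\CC_T$, and degenerates uniformly in $y$ as $(t, x) \to \partial\CC_T$, because the exit time of $(t+\cdot, B_\cdot)$ from $\CC_T$ started at $(t_n, x_n)$ tends to $0$ almost surely by Lemma \ref{lem:contST}. Combined with the finiteness of $\nu$ this yields $V_t(t_n, x_n) \to 0$, which together with the continuity of $V_t$ already known in $\CC_T$ and $\DD_T^\circ$ gives the claim.

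The main obstacle is the rigorous derivation of the representation up to and across the lateral boundary. Two technical points require care: first, the one-sided traces $V_x(s, \pm b_\pm(s)\mp)$ in the boundary terms above need not coincide with $G'(\pm b_\pm(s))$ when $\mu$ has an atom at $\pm b_\pm(s)$, so they must be matched against the corresponding jumps of $\nu - \mu$ hidden in $G''$, exactly as in Lemma \ref{lemma:Vt-meas}; second, in the one-sided support cases $b_+ \equiv +\infty$ or $b_- \equiv +\infty$, the boundary contributions at infinity must be discarded using the uniform spatial decay of $V_x$ furnished by Lemma \ref{lem:Vx-asympt}, following the closing step in the proof of Lemma \ref{lemma:Vt-meas}.
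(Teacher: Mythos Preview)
Your plan has a circularity that you have not addressed. The representation
\[
V_t(t,x)=-\int_\RR p^{\CC_T}\bigl((t,x),(T,y)\bigr)\,\nu(dy)
\]
is exactly Proposition~\ref{prop:Ut} of the paper, and its proof there \emph{uses} the continuity of $V_t$ up to the lateral boundary (so that $U_t=0$ on $\partial\CC_T$ is available when one applies It\^o's formula to $U_t(t+s,B_s)$ and stops at $\tau_*$). Your interior-slab integration by parts does not produce this formula: taking $h\downarrow 0$ in the identity you wrote only recovers the weak statement that $V_t+\tfrac{1}{2}V_{xx}=0$ in $\CC_T$ together with some boundary traces of $V_x$. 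That is local-in-time information; it says nothing about propagation from the terminal slice $\{T\}\times\RR$, which is where the measure $\nu$ enters. To pass from the heat equation in $\CC_T$ plus terminal data $V_t(t,\cdot)\,dx\to-\nu(dx)$ to the Feynman--Kac formula you still need a lateral boundary condition for $V_t$, and that is precisely the claim you are trying to prove. Without it you have no uniqueness and the identification fails.

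The paper avoids this trap by not invoking the representation at all in the proof of Proposition~\ref{prop:cont-Vt}. Instead it bounds the backward difference quotient $\tfrac{1}{h}\bigl(V(t_n,x_n)-V(t_n-h,x_n)\bigr)$ from below directly via the superharmonic characterisation of $V$: one uses $\tau_{n,h}:=\tau_*(t_n-h,x_n)$, which is optimal for $V(t_n-h,x_n)$ and merely admissible for $V(t_n,x_n)$, and the martingale property inside $\CC_T$ to reduce to a quantity of the form $\int f_{n,h}(y)\,\sigma_h(dy)$ with $\sigma_h$ the \emph{terminal} slab measure of Lemma~\ref{lemma:Vt-meas}. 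The weak convergence $\sigma_h\to-\nu$ together with Lemma~\ref{lem:contST} then closes the argument as $h\to 0$ and $n\to\infty$. This is the missing mechanism: the terminal slab plus the supermartingale inequality substitute for the boundary condition you do not yet have. If you want your approach to work, you must supply an independent reason why $V_t$ coincides with the killed-semigroup candidate without already assuming $V_t\to 0$ at $\partial\CC_T$; nothing in the tools you cite (Proposition~\ref{prop:sm-fit}, Corollary~\ref{cor:smf1}, Lemma~\ref{lem:sm-fit2}, Lemma~\ref{lem:Vx-asympt}) provides that.
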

\begin{proof}
Continuity of $V_t$ holds separately inside $\CC_T$ and in $\DD_T$, thus it remains to verify it across the boundary of $\CC_T$. 

First we fix $t\in(0,T)$ and $x\in\RR$ such that $(t,x)\in\partial\CC_T$, and take a sequence $(t_n,x_n)_{n\in\mathbb{N}}\subset \CC_T$ such that $(t_n,x_n)\to(t,x)$ as $n\to\infty$. For technical reasons that will be clear in what follows we assume $t\le T- 2\delta$ for some arbitrarily small $\delta>0$ and with no loss of generality we also consider $t_n< T-\delta$ for all $n$. Now we aim at providing upper and lower bounds for $V_t(t_n,x_n)$ for each $n\in\mathbb{N}$. A simple upper bound follows by observing that $t\mapsto V(t,x)$ is decreasing and clearly
\begin{align}\label{eq:Vt}
V_t(t_n,x_n)\le 0\quad \text{for all $n\in\mathbb{N}$}.
\end{align}

For the lower bound we fix $n$ and take $h>0$ such that $t_n- h\ge 0$ and hence $(t_n-h,x_n)\in\CC_T$. For simplicity we denote $\tau_n=\tau_*(t_n,x_n)$ and $\tau_{n,h}:=\tau_*(t_n-h,x_n)$ as in \eqref{eq:optst} so that
$\tau_{n,h}$ is optimal for the problem with value $V(t_n-h,x_n)$.
We use the superharmonic characterisation of $V$ to obtain
\begin{align}\label{eq:Vt-01}
V(&t_n,x_n)-V(t_n-h,x_n)\\
\ge& \EE_{x_n}\left[V(t_n+\tau_{n,h}\wedge(T-t_n),B_{\tau_{n,h}\wedge(T-t_n)})-V(t_n-h+\tau_{n,h},B_{\tau_{n,h}})\right]\nonumber\\[+3pt]
=&\EE_{x_n}\left[\left(V(t_n+\tau_{n,h},B_{\tau_{n,h}})-V(t_n-h+\tau_{n,h},B_{\tau_{n,h}})\right)\mathds{1}_{\{\tau_{n,h}< T-t_n\}}\right]\nonumber\\[+3pt]
&+\EE_{x_n}\left[\left(V(T,B_{T-t_n})-V(t_n-h+\tau_{n,h},B_{\tau_{n,h}})\right)\mathds{1}_{\{\tau_{n,h}\ge T-t_n\}}\right].\nonumber
\end{align}
Observe that on the set $\{\tau_{n,h}< T-t_n\}$ it holds $V(t_n-h+\tau_{n,h},B_{\tau_{n,h}})=G(B_{\tau_{n,h}})$ and $V(t_n+\tau_{n,h},B_{\tau_{n,h}})\ge G(B_{\tau_{n,h}})$. On the other hand
\begin{align*}
\EE_{x_n}\left[ V(t_n-h+\tau_{n,h},B_{\tau_{n,h}})\big|\cF_{T-t_n}\right]=V(T-h,B_{T-t_n})\quad\text{on $\{\tau_{n,h}\ge T-t_n\}$}
\end{align*}
by the martingale property of the value function inside the continuation region. Dividing \eqref{eq:Vt-01} by $h$ and taking iterated expectations it then follows
\begin{align}\label{eq:Vt-02}
\frac{1}{h}\Big(&V(t_n,x_n)-V(t_n-h,x_n)\Big)\\
\ge& \frac{1}{h}\EE_{x_n}\left[\left(V(T,B_{T-t_n})-V(T-h,B_{T-t_n})\right)\mathds{1}_{\{\tau_{n,h}\ge T-t_n\}}\right]\nonumber\\[+3pt]
=&\EE_{x_n}\left[\frac{V(T,B_{T-t_n})-V(T-h,B_{T-t_n})}{h}\right]\nonumber\\[+3pt]
&-\EE_{x_n}\left[\mathds{1}_{\{\tau_{n,h}< T-t_n\}}\frac{V(T,B_{T-t_n})-V(T-h,B_{T-t_n})}{h}\right].\nonumber
\end{align}

Since for all $n$ we have $\delta\le T-t_n$ then $\{\tau_{n,h}\le T-t_n-\delta\}\subseteq \{\tau_{n,h}< T-t_n\}$ and since
$V(T,B_{T-t_n})-V(T-h,B_{T-t_n})\le0$ we obtain
\begin{align}\label{eq:Vt-03}
-&\EE_{x_n}\left[\mathds{1}_{\{\tau_{n,h}< T-t_n\}}\frac{V(T,B_{T-t_n})-V(T-h,B_{T-t_n})}{h}\right]\\[+3pt]
\ge& -\EE_{x_n}\left[\mathds{1}_{\{\tau_{n,h}\le T-t_n-\delta\}}\frac{V(T,B_{T-t_n})-V(T-h,B_{T-t_n})}{h}\right]\nonumber\\[+3pt]
=&-\EE_{x_n}\left[\mathds{1}_{\{\tau_{n,h}\le T-t_n-\delta\}}\EE_{B_{\tau_{n,h}}}\left(\frac{V(T,B_{T-t_n-\tau_{n,h}})-V(T-h,B_{T-t_n-\tau_{n,h}})}{h}\right)\right]\nonumber
\end{align}
where the last expression follows by the strong Markov property. Recalling now \eqref{eq:sigmah} and \eqref{def:transd}, and using \eqref{eq:Vt-02} and \eqref{eq:Vt-03} we obtain
\begin{align}\label{eq:Vt-04}
&\frac{V(t_n,x_n)-V(t_n-h,x_n)}{h}\ge\int_\RR f_{n,h}(y)\sigma_h(dy),
\end{align}
where
\begin{align}\label{eq:Vt-05}
f_{n,h}(y):=p(0,x_n,T-t_n,y)-\EE_{x_n}\left[\mathds{1}_{\{\tau_{n,h}\le T-t_n-\delta\}}p(0,B_{\tau_{n,h}},T-t_n-\tau_{n,h},y)\right].
\end{align}

Notice that $|f_{n,h}(y)|\le C$ for some constant independent of $n$ and $h$ (this is easily verified since $T-t_n-\tau_{n,h}\ge\delta$ in the second term of \eqref{eq:Vt-05}). Recalling Lemma \ref{cor:contST-t} it is not hard to verify that for any $(y_h)_{h>0}\subset\RR$ such that $y_h\to y\in\RR$ as $h\to0$ it holds
\begin{align*}
\lim_{h\to 0}f_{n,h}(y_h)\ge f_n(y):= p(0,x_n,T-t_n,y)-\EE_{x_n}\left[\mathds{1}_{\{\tau_{n}< T-t_n-\delta\}}p(0,B_{\tau_{n}},T-t_n-\tau_{n},y)\right],
\end{align*}
where we have used that $\lim_{h\to 0}\mathds{1}_{\{\tau_{n,h}\le T-t_n-\delta\}}\le\mathds{1}_{\{\tau_{n}\le T-t_n-\delta\}}$ since $\tau_{n,h}\downarrow\tau_n$.
Moreover, Lemma \ref{lemma:Vt-meas} implies that $\big(\sigma_h(dy)/\sigma_h(\RR)\big)_{h\in(0,\overline{h})}$ forms a weakly converging family of probability
measures. Therefore we can use a continuous mapping theorem as in \cite[Ch.~4, Thm.~4.27]{Kal} to take limits in \eqref{eq:Vt-04} as $h\to 0$ and get
\begin{align*}
V_t(t_n,x_n)\ge&\lim_{h\to0}\int_\RR f_{n,h}(y)\sigma_h(dy)=-\int_\RR f_n(y)\nu(dy).
\end{align*}

Finally we take limits as $n\to\infty$ in the last expression and we use dominated convergence, the fact that $\tau_n\to0$ as $n\to\infty$ (see Lemma \ref{lem:contST}) and the upper bound \eqref{eq:Vt}, to obtain
\begin{align*}
\lim_{n\to\infty} V_t(t_n,x_n)=0.
\end{align*}
Since the sequence $(t_n,x_n)$ was arbitrary the above limit implies continuity of $V_t$ at $(t,x)\in\partial\CC_T\cap\{t<T\}$.
\end{proof}
\vspace{+6pt}

It is a remarkable fact that in this context continuity of the time derivative $V_t$ holds at all points of the boundary regardless of whether or not the $x$-derivative $V_x$ is continuous there. As a consequence of the above theorem and of \eqref{eq:pde01} we also obtain
\begin{coroll}\label{cor:contVxx}
For any $\eps>0$ it holds that $V_x$ and $V_{xx}$ are continuous on the closure of $\CC_T\cap\{t\le T-\eps\}$.
In particular for any $(t,x)\in\partial\CC_T$ and any sequence $(t_n,x_n)_{n\in\mathbb{N}}\subset\CC_T$ such that $(t_n,x_n)\to(t,x)$ as $n\to\infty$, it holds
\begin{align*}
\lim_{n\to\infty}V_{xx}(t_n,x_n)=0.
\end{align*}
\end{coroll}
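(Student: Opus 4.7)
The key observation is that on $\CC_T$ the value function solves the heat equation \eqref{eq:pde01}, so $V_{xx}=-2V_t$ on $\CC_T$. By Proposition \ref{prop:cont-Vt} the right-hand side is continuous on all of $[0,T)\times\RR$, and in particular admits a continuous extension to $\overline{\CC_T\cap\{t\le T-\eps\}}$. Hence $V_{xx}$ itself extends continuously to that set, which settles the $V_{xx}$ part of the claim at the structural level. The engine of the argument is thus already carried by the preceding proposition; everything else is book-keeping.

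For the pointwise statement, I would identify $V_t$ on $\partial\CC_T\cap\{t<T\}$. In the interior $\DD_T^\circ$ one has $V=G$ and thus $V_t\equiv 0$; moreover the proof of Proposition \ref{prop:cont-Vt} actually shows that $V_t(t_n,x_n)\to 0$ for every sequence $(t_n,x_n)\subset\CC_T$ approaching a boundary point with $t<T$. Continuity of $V_t$ then forces $V_t\equiv 0$ on $\partial\CC_T\cap\{t<T\}$. Substituting into the PDE gives
\[
V_{xx}(t_n,x_n)=-2V_t(t_n,x_n)\longrightarrow -2V_t(t,x)=0,
\]
which is precisely the second assertion of the corollary.

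To transfer the regularity to $V_x$ on $\overline{\CC_T\cap\{t\le T-\eps\}}$, I would fix a boundary point $(t_0,x_0)$, pick an auxiliary $y_0\in(-b_-(t_0),b_+(t_0))$ close to $x_0$, and use the horizontal-slice identity
\[
V_x(t,x)=V_x(t,y_0)+\int_{y_0}^{x}V_{xx}(t,z)\,dz,
\]
valid whenever the segment $\{t\}\times[y_0\wedge x,y_0\vee x]$ lies in $\CC_T$. Thanks to the interval structure of each $t$-section of $\CC_T$ together with monotonicity and right-continuity of $b_\pm$, this segment condition holds for every $(t,x)\in\CC_T$ in a suitable neighbourhood of $(t_0,x_0)$ provided $y_0$ is chosen close enough to $x_0$. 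Letting $(t,x)\to(t_0,x_0)$ from inside $\CC_T$ and using continuity of $V_x$ on $\CC_T$ together with the continuity of $V_{xx}$ on the closure established above, dominated convergence in the integral produces the required continuous extension of $V_x$. The only mild technical point is checking the segment condition, and this is elementary once $y_0$ is fixed suitably; no new estimates beyond Proposition \ref{prop:cont-Vt} are needed.
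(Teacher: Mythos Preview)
Your proposal is correct and follows exactly the route the paper intends: the corollary is stated as an immediate consequence of Proposition~\ref{prop:cont-Vt} and \eqref{eq:pde01}, and your argument via $V_{xx}=-2V_t$ together with $V_t=0$ on $\partial\CC_T\cap\{t<T\}$ is precisely how that implication unfolds. The integration-along-slices step you add for $V_x$ is the natural way to recover continuity of $V_x$ from that of $V_{xx}$ and interior regularity, and requires nothing beyond what the paper already provides.
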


We conclude the section with a technical lemma that will be useful in the rest of the paper.
\begin{lemma}\label{lemma:VtdxT}
For any $f\in C_b(\RR)$ one has
\begin{align}\label{eq:limVt}
\lim_{t\uparrow T}\int_\RR f(x)V_t(t,x)dx=-\int_\RR f(x)\nu(dx)
\end{align}
i.e.~it holds $V_t(t,x)dx\to -\nu(dx)$ weakly as a measure, in the limit as $t\uparrow T$.
\end{lemma}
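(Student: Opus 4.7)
The strategy is to mimic the computation in the proof of Lemma \ref{lemma:Vt-meas} but applied at a single time $t$ rather than to the time-averaged increment. First I would fix $f\in C^2_c(\RR)$ (or $f\in C^2_b(\RR)$ with the same tail decay used in the one-sided case of Lemma \ref{lemma:Vt-meas}) and compute $\int_\RR f(x)V_t(t,x)\,dx$ for $t$ close to $T$. By Proposition \ref{prop:cont-Vt}, $V_t$ is continuous on $[0,T)\times\RR$, it vanishes on the interior of $\DD_T$ (since $V=G$ there and $G$ is time-independent), and satisfies $V_t=-\tfrac12 V_{xx}$ on $\CC_T$ by \eqref{eq:pde01}. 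Hence, using Theorem \ref{thm:OS},
\begin{align*}
\int_\RR f(x)V_t(t,x)\,dx=-\tfrac12\int_{-b_-(t)}^{b_+(t)}f(x)V_{xx}(t,x)\,dx.
\end{align*}

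Next I would perform two integrations by parts on the bounded interval $(-b_-(t),b_+(t))$, to obtain
\begin{align*}
\int_\RR f(x)V_t(t,x)\,dx=-\tfrac12\bigl[f(\cdot)V_x(t,\cdot)-f'(\cdot)V(t,\cdot)\bigr]_{-b_-(t)}^{b_+(t)}-\tfrac12\int_{-b_-(t)}^{b_+(t)}f''(x)V(t,x)\,dx,
\end{align*}
where the boundary evaluations are interpreted via the one-sided limits $V_x(t,b_+(t)-)$ and $V_x(t,-b_-(t)+)$, as in \eqref{lim-fVx}. Now I let $t\uparrow T$: by Theorem \ref{thm:OS} we have $b_\pm(t)\to\hat b_\pm$; by continuity of $V$ we have $V(t,\cdot)\to G$ uniformly on compacts; and by Lemma \ref{lem:sm-fit2} the boundary derivatives $V_x(t,b_+(t)-)$ and $V_x(t,-b_-(t)+)$ converge to $G'(\hat b_+-)$ or $G'(\hat b_+)$ and to $G'(-\hat b_-)$ or $G'(-\hat b_--)$, depending on the atomic structure of $\mu$ and $\nu$ at the extremes of the support. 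Undoing the integration by parts for $G$ then yields exactly the same expression as in the display leading to \eqref{eq:weak02}, namely
\begin{align*}
\lim_{t\uparrow T}\int_\RR f(x)V_t(t,x)\,dx=-\tfrac12\int_\RR\mathds{1}_{[-a_-,a_+]}(y)f(y)G''(dy)=-\int_\RR f(y)\nu(dy),
\end{align*}
where I have used that $G''(dy)=2(\nu-\mu)(dy)$ and that $\mu((-\hat b_-,\hat b_+))=0$ by (D.1).

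Finally I would extend from the smooth test class to $f\in C_b(\RR)$ by a density argument. Taking $f\equiv 1$ in the integration by parts formula and using \eqref{eq:sm-fit0} gives the uniform total-variation bound $\int_\RR|V_t(t,x)|\,dx\le L_G$ for all $t$ sufficiently close to $T$. When $\mu_\pm>0$ all the measures $V_t(t,\cdot)\,dx$ are supported in the common compact set $[-b_-(T-\bar h),b_+(T-\bar h)]$ for $t\in[T-\bar h,T)$, so the extension follows by uniform approximation on compacts as in the last paragraph of the proof of Lemma \ref{lemma:Vt-meas}; in the one-sided cases $\supp\{\mu\}\cap\RR_\pm=\emptyset$ the same tail-control argument based on Lemma \ref{lem:Vx-asympt} (which gives a uniform-in-$t$ estimate on $\bigl|V_t(t,\cdot)\bigr|([M,+\infty))$ or $\bigl|V_t(t,\cdot)\bigr|((-\infty,-M])$ via $V_x$) yields tightness and hence the density argument. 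The main technical obstacle is keeping track of the several sub-cases for the boundary limits (atom vs.\ no atom of $\mu$ at $\pm\hat b_\pm$, and coincidence or not of $\hat b_\pm$ with $a_\pm$), but all of these have been settled once and for all in Lemma \ref{lem:sm-fit2} and can be invoked here without repetition.
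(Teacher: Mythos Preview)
Your proposal is correct and follows essentially the same approach as the paper's own proof: reduce to $f\in C^2_b(\RR)$, use $V_t=-\tfrac12 V_{xx}$ on $\CC_T$ together with Proposition \ref{prop:cont-Vt} to integrate by parts over $(-b_-(t),b_+(t))$, invoke Lemma \ref{lem:sm-fit2} for the boundary limits as $t\uparrow T$, and then extend to $C_b(\RR)$ via the density argument from Lemma \ref{lemma:Vt-meas}. The paper's proof is in fact terser than yours, explicitly noting only the $\mu_\pm>0$ case and deferring the one-sided and density extensions to the arguments already given in Lemma \ref{lemma:Vt-meas}.
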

\begin{proof}
The proof is very similar to that of Lemma \ref{lemma:Vt-meas}. It suffices to prove the claim for $\mu_\pm>0$ and $f\in C^2_b(\RR)$ since arguments as in the final part of the proof of Lemma \ref{lemma:Vt-meas} allow us to extend the result to $f\in C_b(\RR)$ and any $\mu_\pm$.

We take $\overline{h}>0$ as in the proof of Proposition \ref{prop:sm-fit} and we let $A\subset\RR$ be an open bounded interval such that $[-b_-(T-\overline{h}),b_+(T-\overline{h})]\subset A$. 
Then for any $f\in C^2_b(\RR)$, $t\in(T-\overline{h},T)$ we use Proposition \ref{prop:cont-Vt} along with \eqref{eq:pde01} and \eqref{eq:pde02} to obtain
\begin{align*}
\int_A f(y)V_t(t,y)dy=&-\tfrac{1}{2}\int^{b_+(t)}_{-b_-(t)} f(y)V_{xx}(t,y)dy\\
=&-\tfrac{1}{2}\left[\big(f(\,\cdot\,)V_x(t,\,\cdot\,)-f'(\,\cdot\,)G(\,\cdot\,)\big|^{b_+(t)}_{-b_-(t)}+\int_Af''(y)V(t,y)dy\right].
\end{align*}
Taking limits as $t\to T$ and arguing as in \eqref{eq:weak02} we obtain \eqref{eq:limVt}. 
\end{proof}


\section{The Skorokhod embedding}\label{sec:sk}

In this section we will show that the optimal boundaries $b_\pm$ found in Theorem \ref{thm:OS} are the boundaries of the time reversed Rost's barrier associated to $\mu$. 

Here we recall the notation introduced in Section \ref{sec:sett} and let $s_-$ and $s_+$ be the reversed boundaries from Definition \ref{def:spm}. We denote 
\begin{align*}
&\CC^-_\infty:=\big\{(t,x)\in[0,+\infty)\times\RR\,:\,x\in\big(-s_-(t),s_+(t)\big)\big\},\\
&\DD^-_\infty:=\big\{(t,x)\in[0,+\infty)\times\RR\,:\,x\in\big(-\infty,-s_-(t)]\cup[s_+(t),+\infty\big)\big\},
\end{align*}
again with the convention $(-\infty,-\infty]=[+\infty,+\infty)=\emptyset$.

Arguing as in \eqref{eq:invbarrier} we introduce the (generalised) inverse of $s_\pm$ defined by
\begin{align}\label{eq:invbarrier00}
\varphi(x):=
\left\{
\begin{array}{ll}
\inf\{t\ge 0\,:\,-s_-(t)< x \}, & x\le -s_-(0)\\[+4pt]
0, & x\in(-s_-(0),s_+(0))\\[+4pt]
\inf\{t\ge0\,:\,s_+(t)> x\}, & x\ge s_+(0)
\end{array}
\right.
\end{align}
Notice that $x\in(-s_-(t),s_+(t))$ if and only if $\varphi(x)<t$ and note also that for each $T>0$ it holds (see \eqref{eq:invbarrier})
\begin{align*}
T_*(x)=T-\varphi(x),\qquad\text{for}\: x\in[-s_-(T),s_+(T)].
\end{align*}
It is not hard to see that $\varphi$ is positive, decreasing left-continuous on $\RR_-$ and increasing right-continuous on $\RR_+$ (hence upper semi-continuous).

\begin{figure}[!ht]
\label{fig:2}
\begin{center}
\includegraphics[scale=0.65]{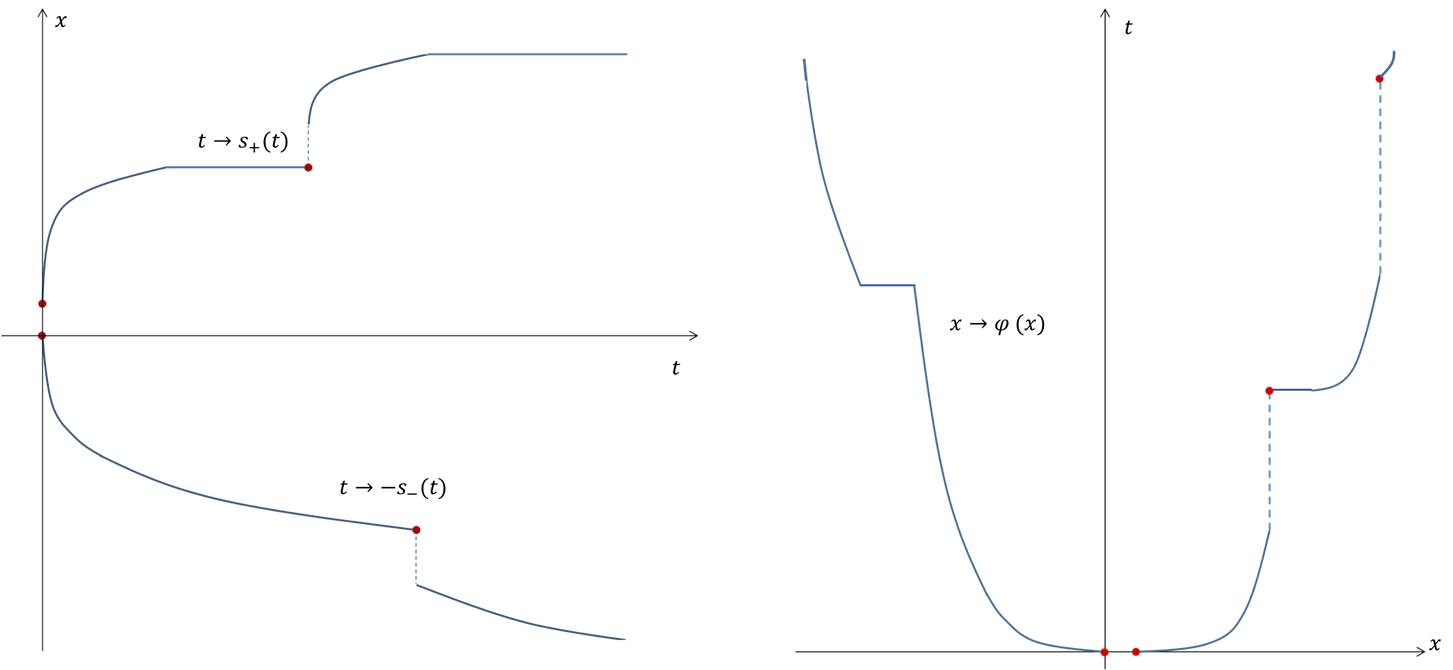}
\end{center}
\caption{A drawing of possible reversed boundaries $s_+$ and $-s_-$ (on the left) and of the corresponding generalised inverse function $\varphi$ (on the right).}



\end{figure}

Our first step is to use stochastic calculus to find a probabilistic representation of $V_t$. 
Let us start by introducing some notation. Along with the Brownian motion $B$ we consider another Brownian motion $W:=(W_t)_{t\ge0}$ independent of $B$ and we denote $(\cF^W_t)_{t\ge0}$ the filtration generated by $W$ and augmented with $\PP$-null sets. Recalling $\tau_*$, $\tilde \tau_*$ and \eqref{st-interior}, we now introduce similar concepts relative to the sets $\CC^-_\infty$ and $\DD^-_\infty$. For $(t,x)\in\RR_+\times\RR$ we now set
\begin{align}
\label{eq:tau-} &\tau_-(t,x):=\inf\big\{u>0\,:\,x+W_u\notin\big(-s_-(t+u),s_+(t+u)\big)\big\}\\[+4pt]
\label{eq:tau-2} &\tilde{\tau}_-(t,x):=\inf\big\{u>0\,:\,x+W_u\notin\big[-s_-(t+u),s_+(t+u)\big]\big\}.
\end{align}
It is clear that $\tau_-$ and $\tilde{\tau}_-$ are $(\cF^W_t)$-stopping times. 
Moreover in \cite{Cox-Pe13} (see eq.~(2.9) therein) one can find an elegant proof of the fact that\footnote{To avoid confusion note that in \cite{Cox-Pe13} our functions $s_+$ and $-s_-$ are denoted respectively $b$ and $c$.}
\begin{align}\label{eq:equivtaus}
\PP_{t,x}(\tau_-=\tilde{\tau}_-)=1\quad\text{for all $(t,x)\in[0,+\infty)\times\RR$}.
\end{align}
The latter plays a similar role to \eqref{st-interior} in the case of the sets $\CC^-_\infty$ and $\DD^-_\infty$.
In what follows, and in particular for Lemma \ref{lemma:Hunt}, we will find sometimes convenient to use $\tilde{\tau}_-$ instead of $\tau_-$ to carry out our arguments of proof.

The stopping times $\tau_-$ and $\tilde\tau_-$ are introduced in order to link $V_t$ to the transition density of the process $(t,W_t)$ killed upon leaving the set $\CC^-_\infty$. This is done in Proposition \ref{prop:Ut}. The latter is then used to prove that $\DD_\infty^-$ is indeed the Rost's barrier (see the proof of Theorem \ref{thm:sk} provided below). 

From now on we denote $p^\CC(t,x,s,y)$, $s>t$, the transition density associated with the law $\PP_{t,x}(B_s\in dy\,,\,s\le\tau_*)$ of the Brownian motion killed at $\tau_*$. Similarly we denote $p^\CC_-(t,x,s,y)$, $s>t$, the transition density associated with the law $\PP_{t,x}(W_s\in dy\,,\,s\le\tau_-)$ of $W$ killed at $\tau_-$. It is well known that
\begin{align}
\label{eq:pC}&p^\CC(t,x,s,y)=p(t,x,s,y)-\EE_{t,x}\mathds{1}_{\{s>\tau_*\}}p(\tau_*,B_{\tau_*},s,y)
\end{align}
for $(t,x),(s,y)\in\CC_T$ and
\begin{align}
\label{eq:pC-}&p^\CC_-(t,x,s,y)=p(t,x,s,y)-\EE_{t,x}\mathds{1}_{\{s>\tau_-\}}p(\tau_-,W_{\tau_-},s,y)
\end{align}
for $(t,x),(s,y)\in\CC^-_\infty$ (see e.g.~\cite[Ch.~24]{Kal}).

The next lemma provides a result which can be seen as an extension of Hunt's theorem as given in \cite[Ch.~24, Thm.~24.7]{Kal} to time-space Brownian motion. Although such result seems fairly standard we could not find a precise reference for its proof in the time-space setting and for the sake of completeness we provide it in the appendix.
\begin{lemma}\label{lemma:Hunt}
For all $0\le t<s\le T$ and $x\in(-b_-(t),b_+(t))$, $y\in(-b_-(s),b_+(s))$, it holds $p^{\CC}(t,x,s,y)=p^{\CC}_-(T-s,y,T-t,x)$.
\end{lemma}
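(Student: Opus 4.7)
The plan is to reduce the identity to the symmetry of the Gaussian kernel together with the time-reversibility of Brownian bridge, modulo the geometric correspondence between $\CC_T$ and $\CC^-_\infty$ under the reflection $u\mapsto T-u$. By Definition \ref{def:spm} one has $s_\pm(T-u)=b_\pm^T(u)$ for $u\in(0,T]$, whence
$$(u,z)\in\CC_T\quad\Longleftrightarrow\quad (T-u,z)\in\CC^-_\infty,\qquad u\in[0,T],$$
with the analogous equivalence on the boundaries. This identifies the two space-time cylinders, and in particular the event "stay in $\CC_T$ on $[t,s]$'' for a forward trajectory corresponds, under $u\mapsto T-u$, to "stay in $\CC^-_\infty$ on $[T-s,T-t]$'' for the reversed trajectory.

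Next I would disintegrate both killed transition densities through the associated Brownian bridge survival probabilities. Writing $\PP^{\mathrm{br}}_{t,x;s,y}$ for the law of a Brownian bridge of length $s-t$ pinned at $(t,x)$ and $(s,y)$, the identities
\begin{align*}
p^\CC(t,x,s,y)&=p(t,x,s,y)\,\PP^{\mathrm{br}}_{t,x;s,y}\bigl((\tau,X_\tau)\in\CC_T\text{ for all }\tau\in[t,s]\bigr),\\
p^\CC_-(T-s,y,T-t,x)&=p(T-s,y,T-t,x)\,\PP^{\mathrm{br}}_{T-s,y;T-t,x}\bigl((\tau',Y_{\tau'})\in\CC^-_\infty\text{ for all }\tau'\in[T-s,T-t]\bigr)
\end{align*}
follow from standard disintegration of killed transition densities against endpoint measures. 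The Gaussian prefactors coincide trivially, since by inspection $p(t,x,s,y)=p(T-s,y,T-t,x)$, so the task reduces to showing that the two bridge survival probabilities agree.

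For the latter I would invoke the classical fact that if $Y$ is a Brownian bridge from $(T-s,y)$ to $(T-t,x)$, then $X_\tau:=Y_{T-\tau}$, $\tau\in[t,s]$, is a Brownian bridge from $(t,x)$ to $(s,y)$. Under this time reversal the event $\{(\tau',Y_{\tau'})\in\CC^-_\infty\ \forall\tau'\in[T-s,T-t]\}$ becomes $\{(T-\tau,X_\tau)\in\CC^-_\infty\ \forall\tau\in[t,s]\}$, which by the set correspondence of the first step is precisely $\{(\tau,X_\tau)\in\CC_T\ \forall\tau\in[t,s]\}$. Hence the two bridge probabilities are equal and the identity in the lemma follows.

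The main technical obstacle is that $b_\pm$ and $s_\pm$ may be discontinuous, so one must check that the bridge survival events are insensitive to whether the open set $\CC_T$ (respectively $\CC^-_\infty$) or its closure is used; otherwise a naive open/closed mismatch could invalidate the reversal. On the $\CC^-_\infty$ side this is exactly the content of the a.s.\ identity $\tau_-=\tilde\tau_-$ recorded in \eqref{eq:equivtaus}, while on the $\CC_T$ side the analogous property is provided by \eqref{st-interior}. Together they ensure that the bridge a.s.\ avoids the critical $\mu$-null segments across which $s_\pm$ (equivalently $b_\pm$) jump, removing any measure-theoretic ambiguity and closing the argument.
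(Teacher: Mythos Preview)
Your argument is correct and conceptually clean, but it is genuinely different from the route taken in the paper. The paper does not use bridges at all: it reduces to the integrated identity
\[
\int_A \PP_x(X_T\in B,\,T\le\tilde\tau_*)\,dx=\int_B \PP_x(X_T\in A,\,T\le\tilde\tau_-)\,dx
\]
for Borel sets $A,B$, writes each survival event as a countable intersection over a dyadic time grid, applies Chapman--Kolmogorov to obtain a finite product of heat kernels, and then simply relabels variables and reverses the order of integration using $p^n_h(x,y)=p^n_h(y,x)$ together with $s_\pm(q)=b_\pm(T-q)$. Monotone convergence in the mesh gives the result. In other words, the paper discretises in time and exploits symmetry of the one-step kernels, whereas you condition on the endpoint and exploit the exact time-reversal invariance of the Brownian bridge in continuous time.

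What each buys: your bridge argument is shorter and makes the geometric content transparent (time reversal of the path and of the domain happen simultaneously), but it relies on the disintegration $p^\CC(t,x,s,y)=p(t,x,s,y)\,\PP^{\mathrm{br}}_{t,x;s,y}(\text{survival})$ being valid pointwise and on the bridge time-reversal lemma, both of which are standard but would need a precise citation. The paper's discretisation avoids bridges entirely and is essentially self-contained from Chapman--Kolmogorov and monotone convergence; it is more elementary but slightly heavier notationally. Both approaches need the same open/closed reconciliation, and you correctly identify \eqref{st-interior} and \eqref{eq:equivtaus} as the inputs that make the survival events insensitive to whether one uses $\CC_T$ or its closure.
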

\vspace{+6pt}

For future frequent use we also define
\begin{align}
\label{eq:U}U^T(t,x):=&V^T(t,x)-G(x),\qquad (t,x)\in[0,T]\times\RR
\end{align}
then $U^T\in C([0,T]\times\RR)$ and \eqref{eq:pde01}--\eqref{eq:pde02} imply
\begin{align}
\label{eq:pdeU01}&\big(U^T_t+\tfrac{1}{2}U^T_{xx}\big)(t,x)=-(\nu-\mu)(dx), & x\in\big(-b_-(t),b_+(t)\big),\:t\in[0,T)\\[+3pt]
\label{eq:pdeU02}&U^T(t,x)=0, &x\in(-\infty,-b_-(t)]\cup[b_+(t),\infty),\: t\in[0,T)\\[+3pt]
\label{eq:pdeU03}&U^T(T,x)=0, & x\in\RR
\end{align}
where the first equation holds in the sense of distributions, and in the second one we shall always understand $(-\infty,-\infty]=[+\infty,+\infty)=\emptyset$.

We can now use Lemma \ref{lemma:Hunt} to find a convenient expression for $U^T_t$ in terms of $p^\CC_-$. 
\begin{prop}\label{prop:Ut}
Fix $T>0$ and denote $U=U^T$ for simplicity (see \eqref{eq:U}). Then $U_t\in C([0,T)\times\RR)$ and it solves
\begin{align}
\label{eq:pdeUt01}&\big((U_t)_t+\tfrac{1}{2}(U_t)_{xx}\big)(t,x)=0, &(t,x)\in\CC_T\\[+5pt]
\label{eq:pdeUt02}&U_t(t,x)=0, &(t,x)\in\partial \CC_T\cap\{t<T\}\\
\label{eq:pdeUt03}&{ \lim_{t\uparrow T}\int_{\RR}f(x)U_t(t,x)dx=-\int_{\RR}f(x)\nu(dx)}, &\text{for all $f\in C_b(\RR)$}.
\end{align}

Moreover the function $U_t$ has the following representation
\begin{align}\label{eq:Ut-trans}
-U_t(t,x)=\int_\RR p^{\CC}(t,x,T,y)\nu(dy)=
\int_\RR p^{\CC}_-(0,y,T-t,x)\nu(dy),\quad(t,x)\in[0,T)\times\RR.
\end{align}
\end{prop}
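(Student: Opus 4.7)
The first four assertions follow directly from earlier results. Since $G$ is independent of time, $U_t = V_t$, so the continuity statement is exactly Proposition \ref{prop:cont-Vt}. Inside $\CC_T$ the function $V$ is $C^\infty$ by classical parabolic regularity, and differentiating \eqref{eq:pde01} once in $t$ yields \eqref{eq:pdeUt01}. On the interior of $\DD_T$ we have $V \equiv G$ and hence $V_t \equiv 0$; by Proposition \ref{prop:cont-Vt} this identity extends continuously to $\partial \CC_T \cap \{t<T\}$, giving \eqref{eq:pdeUt02}. Finally, \eqref{eq:pdeUt03} is precisely the content of Lemma \ref{lemma:VtdxT}.

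For the representation \eqref{eq:Ut-trans} my plan is to apply It\^o's formula to $V_t$ along the Brownian motion killed at $\tau_*$. Fix $(t,x) \in \CC_T$ and $s \in (t,T)$. Since $V_t$ is smooth inside $\CC_T$, solves the heat equation \eqref{eq:pdeUt01} there, and has locally bounded derivatives on compact subsets of $\CC_T$ by Corollary \ref{cor:contVxx} (plus a standard localisation near the lateral boundary to ensure the stochastic integral is a true martingale), It\^o's formula and optional sampling give
\begin{equation*}
V_t(t,x) = \EE_{t,x}\big[V_t(s\wedge\tau_*, B_{s\wedge\tau_*})\big].
\end{equation*}
The contribution on $\{\tau_* \le s\}$ vanishes by \eqref{eq:pdeUt02}, leaving
\begin{equation*}
V_t(t,x) = \int_\RR V_t(s,y)\, p^\CC(t,x,s,y)\, dy.
\end{equation*}
Letting $s \uparrow T$ and invoking Lemma \ref{lemma:VtdxT} then yields the first equality in \eqref{eq:Ut-trans}. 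The second equality is an immediate consequence of Lemma \ref{lemma:Hunt} taken at $s=T$, which gives $p^\CC(t,x,T,y) = p^\CC_-(0,y,T-t,x)$.

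The main obstacle is justifying the passage $s \uparrow T$ above, since Lemma \ref{lemma:VtdxT} only supplies weak convergence of the \emph{measures} $V_t(s,\cdot)\,dy$ to $-\nu$, whereas the kernel $p^\CC(t,x,s,y)$ itself depends on $s$. My plan is to split
\begin{equation*}
\int V_t(s,y)\,p^\CC(t,x,s,y)\,dy = \int V_t(s,y)\,p^\CC(t,x,T,y)\,dy + \int V_t(s,y)\big[p^\CC(t,x,s,y)-p^\CC(t,x,T,y)\big]\,dy,
\end{equation*}
and to control the two pieces separately. The first converges to $-\int p^\CC(t,x,T,y)\,\nu(dy)$ by weak convergence, since $y \mapsto p^\CC(t,x,T,y)$ is bounded and continuous on $\supp\nu$. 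For the second piece I would use that the total variation of $V_t(s,\cdot)\,dy$ is uniformly bounded by $L_G$ (this follows by integrating \eqref{eq:pdeUt01} over $\CC_T(s)$ and invoking \eqref{eq:sm-fit0}), combined with the fact that, since $b_\pm(T-)=\hat b_\pm \ge a_\pm$, the support $\supp\nu \subset [-a_-,a_+]$ lies strictly inside the fixed compact $[-b_-(T-\delta),b_+(T-\delta)]$ which contains the support of $V_t(s,\cdot)\,dy$ for all $s\in(T-\delta,T)$; on such a compact the kernel difference $p^\CC(t,x,s,y)-p^\CC(t,x,T,y)$ tends uniformly to zero by continuity of the killed transition density in its third argument.
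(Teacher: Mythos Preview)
Your treatment of the first four assertions is correct and matches the paper's Step~1. The It\^o-formula identity $V_t(t,x)=\int_\RR V_t(s,y)\,p^{\CC}(t,x,s,y)\,dy$ is also exactly the paper's Step~2.

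The difficulty lies entirely in the passage $s\uparrow T$, and here your argument has a genuine gap. Both halves of your splitting hinge on regularity of the killed kernel at the terminal section of $\CC_T$: the first piece needs $y\mapsto p^{\CC}(t,x,T,y)$ to be continuous at every point of $\supp\nu=[-a_-,a_+]$, and the second needs $p^{\CC}(t,x,s,\cdot)\to p^{\CC}(t,x,T,\cdot)$ uniformly on a compact that necessarily contains the corner points $\pm\hat b_\pm$ of $\CC_T$. Neither is automatic. When $\hat b_\pm>a_\pm$ the first requirement does hold, because then every $y\in\supp\nu$ lies strictly inside the terminal spatial section $(-\hat b_-,\hat b_+)$ and standard Gaussian estimates on \eqref{eq:pC} give joint continuity of $(s,y)\mapsto p^{\CC}(t,x,s,y)$ there; this is precisely the paper's Step~3, which uses a continuous mapping theorem rather than your splitting but is otherwise the same idea.

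However, assumptions (D.1)--(D.2) permit $\hat b_+=a_+$ and/or $\hat b_-=a_-$ (with $\mu$ having no atom there). In that case the endpoints of $\supp\nu$ coincide with the terminal corners of $\CC_T$, your claim that $\supp\nu$ lies \emph{strictly} inside the compact is no longer clear, and the required continuity of $p^{\CC}$ at those corner points is not available. The paper's Step~4 handles this situation by an approximation: it replaces $\mu$ by a family $\mu^\delta$ supported outside $(-\hat b_--\delta,\hat b_++\delta)$, so that the approximating problems all satisfy $\hat b^\delta_\pm>a_\pm$ and Step~3 applies to each of them; it then establishes convergence of the corresponding continuation sets and stopping times, and passes to the limit in \eqref{eq:Ut-trans} via weak* convergence of $U^\delta_t(t,\cdot)$ in $L^\infty$. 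This approximation argument is the missing ingredient in your proposal.
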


\begin{proof} The proof is divided in a number of steps.
\vspace{+6pt}

\noindent\emph{Step 1}. We have already shown in Proposition \ref{prop:cont-Vt} that $V_t$ is continuous on $[0,T)\times\RR$ and equals zero along the boundary of $\CC_T$ for $t<T$. Moreover Lemma \ref{lemma:VtdxT} implies the terminal condition \eqref{eq:pdeUt03}.
In the interior of $\CC_T$ one has $V_t\in C^{1,2}$ by standard results on Cauchy-Dirichlet problems (see for instance \cite[Ch.~3, Thm.~10]{Fri08}). It then follows that $U_t$ solves \eqref{eq:pdeUt01} by differentiating \eqref{eq:pdeU01} with respect to time.
\vspace{+6pt}

\noindent \emph{Step 2}. We now aim at showing \eqref{eq:Ut-trans}. For $(t,x)$ in the interior of $\DD_T$ the result is trivial since $U_t=0$ therein. Hence we prove it for $(t,x)\in\CC_T$ and the extension to $\partial\CC_T$ will follow since $U_t$ is continuous on $[0,T)\times\RR$.

In what follows we fix $(t,x)\in\CC_T$ and set $\tau_*=\tau_*(t,x)$. For $\eps>0$ we use It\^o's formula, \eqref{eq:pdeUt01}--\eqref{eq:pdeUt03}, strong Markov property and the definition of $p^\CC$ to obtain
\begin{align*}
-U_t(t,x)=&-\EE_x U_t(t+\tau_*\wedge(T-t-\eps),B_{\tau_*\wedge(T-t-\eps)})\\
=&-\EE_x U_t(T-\eps,B_{T-t-\eps})\mathds{1}_{\{\tau_*\ge T-t-\eps\}}\nonumber\\
=&-\int_\RR U_t(T-\eps,y)p^\CC(t,x,T-\eps,y)dy\nonumber
\end{align*}
Now we want to pass to the limit as $\eps\to0$ and use Lemma \ref{lemma:VtdxT} and a continuous mapping theorem to obtain \eqref{eq:Ut-trans}. This is accomplished in the next two steps.
\vspace{+6pt}

\noindent\emph{Step 3}. First we assume that $\hat{b}_\pm>a_\pm$. Note that from \eqref{eq:pC} one can easily verify that $(s,y)\mapsto p^\CC(t,x,s,y)$ is continuous at all points in the interior of $\CC_T$ by simple estimates on the Gaussian transition density. Therefore for any $y\in[-a_-,a_+]$, any sequence $(\eps_j)_{j\in\NN}$ with $\eps_j\to0$ as $j\to\infty$, and any sequence $(y_{\eps_j})_{j\in\mathbb{N}}$ converging to $y$ as $j\to\infty$ there is no restriction in assuming $(T-\eps_j,y_{\eps_j})\in\CC_T$ so that $p^\CC(t,x,T-\eps_j,y_{\eps_j})\to p^\CC(t,x,T,y)$ as $j\to\infty$.
Hence taking limits as $\eps\to0$ and using \eqref{eq:limVt} and a continuous mapping theorem as in \cite[Ch.~4, Thm.~4.27]{Kal} we obtain
\begin{align*}
-U_t(t,x)
=\int_\RR p^\CC(t,x,T,y)\nu(dy)=\int_\RR p^\CC_-(0,y,T-t,x)\nu(dy)
\end{align*}
where the last equality follows from Lemma \ref{lemma:Hunt}.
\vspace{+6pt}

\noindent\emph{Step 4}. Here we consider the opposite situation to step 3 above, i.e.~the case $\hat b_\pm=a_\pm$. 
For arbitrary $\delta>0$ we introduce the approximation
\begin{align*}
F^\delta_\mu(x):=\left\{
\begin{array}{ll}
F_\mu(x), & x\in(-\infty,-\hat{b}_--\delta]\\[+3pt]
F_\mu(-\hat{b}_--\delta), & x\in(-\hat{b}_--\delta,\hat{b}_++\delta)\\[+3pt]
F_\mu(x)-\big[F_\mu((\hat{b}_++\delta)-)-F_\mu(-\hat{b}_--\delta)\big], & x\in[\hat{b}_++\delta,\infty)
\end{array}
\right.
\end{align*}
which is easily verified to fulfil
\begin{align}\label{eq:Fdelta01}
\lim_{\delta\to0} \sup_{x\in\RR}\big|F^\delta_\mu(x)-F_\mu(x)\big|=0
\end{align}
since $F_\mu$ is continuous at $\pm\hat{b}_\pm$ by Assumption D.2.
Moreover for $\mu^\delta(dx):=F^\delta(dx)$ we have
\begin{align*}
\mu^\delta(dx)=\left\{
\begin{array}{ll}
\mu(dx), & x\in(-\infty,-\hat{b}_--\delta]\cup[\hat{b}_++\delta,+\infty)\\[+5pt]
0,& x\in(-\hat{b}_--\delta,\hat{b}_++\delta)
\end{array}
\right.
\end{align*}

Associated to each $F^\delta_\mu$ we consider an approximating optimal stopping problem with value function $V^\delta$. The latter is defined as in \eqref{eq:V} with $G$ replaced by $G^\delta$, and $G^\delta$ defined as in \eqref{eq:G} but with $F^\delta_\mu$ in place of $F_\mu$. It is clear that the analysis carried out in Theorem \ref{thm:OS-b} and Proposition \ref{prop:sm-fit} for $V$ and $G$ can be repeated with minor changes when considering $V^\delta$ and $G^\delta$. Indeed the only conceptual difference between the two problems is that $F^\delta_\mu$ does not describe a probability measure on $\RR$ being in fact $\mu^\delta(\RR)<1$. 

In particular the continuation set for the approximating problem, i.e.~the set where $V^\delta>G^\delta$, is denoted by $\CC^\delta_T$ and there exists two right-continuous, decreasing, positive functions of time $b_\pm^\delta$ with $b_\pm^\delta(T-)=\hat{b}_\pm+\delta$ such that
\begin{align*}
\CC_T^\delta:=\big\{(t,x)\in[0,T)\times\RR\,:\,x\in\big(-b^\delta_-(t),b^\delta_+(t)\big)\big\}.
\end{align*}

It is clear from the definition of $F^\delta_\mu$ that for any Borel set $A\in\RR$ it holds $\mu^\delta(A)\le \mu^{\delta'}(A)$ if $\delta'<\delta$. Hence for $\delta'<\delta$, $(t,x)\in[0,T)\times\RR$ we obtain the following key inequality
\begin{align*}
V^\delta(t,x)-G^\delta(x)=&\sup_{0\le\tau\le T-t}\EE_x\int_\RR L^z_\tau(\nu-\mu^\delta)(dz)\\[+3pt]
\ge&\sup_{0\le\tau\le T-t}\EE_x\int_\RR L^z_\tau(\nu-\mu^{\delta'})(dz)\nonumber\\[+3pt]
=&V^{\delta'}(t,x)-G^{\delta'}(x)\nonumber
\end{align*}
by It\^o-Tanaka-Meyer formula. The above also holds if we replace $V^{\delta'}-G^{\delta'}$ by $V-G$ and it implies that the family of sets $\big(\CC^\delta_T\big)_{\delta>0}$ decreases as $\delta\downarrow 0$ with $\CC^\delta_T\supseteq\CC_T$ for all $\delta>0$. We claim that
\begin{align}\label{eq:convCd}
\lim_{\delta\to0}\CC^\delta_T=\CC_T\quad\text{and}\quad\lim_{\delta\to0}b^\delta_\pm(t)=b_\pm(t)\quad\text{for all $t\in[0,T)$.}
\end{align}
The proof of the above limits follows from standard arguments and is given in appendix where it is also shown that
\begin{align}\label{eq:Vdelta01}
\lim_{\delta\to0} \sup_{(t,x)\in[0,T)\times K}\big|V^\delta(t,x)-V(t,x)\big|=0, \quad\text{$K\subset\RR$ compact}.
\end{align}

Now for each $\delta>0$ we can repeat the arguments that we have used above in this section and in Section \ref{sec:sett} to construct a set $\CC^{\delta,-}_\infty$ which is the analogue of the set $\CC^-_\infty$. All we need to do for such construction is to replace the functions $s_+$ and $s_-$ by their counterparts $s^\delta_+$ and $s^\delta_-$ which are obtained by pasting together the reversed boundaries $s^{\delta,n}_\pm(t):=b^{\delta,T_n}_\pm(T_n-t)$, $t\in(0,T_n]$ (see Definition \ref{def:spm} and the discussion preceding it).

As in \eqref{eq:optst} and \eqref{def:tau-t} we define by $\tau^\delta_*$ the first time the process $(t,B_t)_{t\ge0}$ leaves $\CC^\delta_T$ and by $\tilde \tau^\delta_*$ the first time $(t,B_t)_{t\ge0}$ leaves the closure of $\CC^\delta_T$. Similarly to \eqref{eq:tau-} and \eqref{eq:tau-2} we also denote by $\tau^\delta_-$ and $\tilde\tau^\delta_-$ the first strictly positive times the process $(W_t)_{t\ge0}$ leaves $(-s^\delta_-(t),s^\delta_+(t))$ and $[-s^\delta_-(t),s^\delta_+(t)]$, $t>0$, respectively. It holds again, as in \eqref{eq:equivtaus}, that 
\begin{align}\label{eq:equivtaus2}
\text{$\PP_{t,x}(\tau^\delta_-=\tilde\tau^\delta_-)=1$ for all $(t,x)\in[0,+\infty)\times\RR$.}
\end{align}

It is clear that $\tau^\delta_-$ decreases as $\delta\to0$ (since $\delta\mapsto\CC^\delta_T$ is decreasing) and $\tau^\delta_-\ge \tau_-$, $\PP$-a.s.~for all $\delta>0$. We show in appendix that in fact
\begin{align}\label{eq:convTd}
\lim_{\delta\to0}\tau^\delta_-=\tau_-,\quad\text{$\PP$-a.s.}
\end{align}

The same arguments used to prove Proposition \ref{prop:cont-Vt} (up to a refinement of Lemmas \ref{lem:Vx-asympt} and \ref{lemma:Vt-meas} which we discuss in the penultimate section of the appendix) can now be applied to show that $V^\delta_t$ is continuous on $[0,T)\times\RR$ and $V^\delta_t=0$ outside of $\CC^\delta_T\cap\{t<T\}$. Therefore, for fixed $\delta>0$, we can use the arguments of step 1, step 2 and step 3 above since $\hat{b}_\pm+\delta>a_\pm$ and obtain
\begin{align}\label{eq:FK2d}
-U^\delta_t(t,x)
=\int_\RR p^{\CC,\delta}(t,x,T,y)\nu(dy)=\int_\RR p^{\CC,\delta}_-(0,y,T-t,x)\nu(dy)
\end{align}
where obviously the transition densities $p^{\CC,\delta}$ and $p^{\CC,\delta}_-$ have the same meaning of $p^{\CC}$ and $p^{\CC}_-$ but with the sets $\CC_T$ and $\CC^-_\infty$ replaced by $\CC^\delta_T$ and $\CC^{\delta,-}_\infty$, respectively. Note that $U^\delta_t\le0$, then for fixed $t\in[0,T)$ the expression above implies (see \eqref{eq:pC} and \eqref{eq:pC-})
\begin{align*}
\sup_{x\in\RR}\big|U^\delta_t(t,x)\big|\le\sup_{x\in\RR}\int_\RR p(0,y,T-t,x)\nu(dy)<+\infty\quad\text{for all $\delta>0$}
\end{align*}
and therefore there exists $g\in L^\infty(\RR)$ such that $U^\delta_t(t,\,\cdot\,)$ converges along a subsequence to $g$ as $\delta\to0$ in the weak* topology relative to $L^\infty(\RR)$. Moreover since \eqref{eq:Vdelta01} holds and the limit is unique, it must also be $g(\,\cdot\,)=U_t(t,\,\cdot\,)$.

Now, for an arbitrary Borel set $B\subseteq [-s_-(T-t),s_+(T-t)]$, \eqref{eq:FK2d} gives
\begin{align*}
-\int_B U^\delta_t(t,x)dx=\int_\RR \PP_y(W_{T-t}\in B,\:T-t\le\tau^\delta_-)\nu(dy).
\end{align*}
We take limits in the above equation as $\delta\to0$ (up to selecting a subsequence), we use dominated convergence and \eqref{eq:convTd} for the right-hand side, and weak* convergence of $U^\delta_t$ for the left-hand side, and obtain
\begin{align*}
-\int_B U_t(t,x)dx=\int_\RR \PP_y(W_{T-t}\in B,\:T-t\le\tau_-)\nu(dy).
\end{align*}

Finally, since $B$ is arbitrary we can conclude that \eqref{eq:Ut-trans} holds in general.
\vspace{+6pt}

After step 3 and 4 the remaining intermediate cases are: (i) $\hat b_+=a_+$ and $\hat b_->a_-$, and (ii) $\hat b_-=a_-$ and $\hat b_+>a_+$. These may be addressed by a simple combination of the methods developed in steps 3 and 4 and we omit further details. 
\end{proof}
\vspace{+6pt}

Now we are ready to prove the main result of this section, i.e.~Theorem \ref{thm:sk}, whose statement we recall for convenience.
\vspace{+8pt}

\noindent\textbf{Theorem 2.3}\emph{
Let $W^\nu:=(W^\nu_t)_{t\ge0}$ be a standard Brownian motion with initial distribution $\nu$ and define
\begin{align}
\sigma_*:=\inf\big\{t>0\,:\,W^\nu_t\notin \big(-s_-(t),s_+(t)\big)\big\}.
\end{align}
Then it holds
\begin{align}\label{eq:skb}
\EE f(W^\nu_{\sigma_*})\mathds{1}_{\{\sigma_*<+\infty\}}=\int_\RR f(y)\mu(dy),\quad\text{for all $f\in C_b(\RR)$.}
\end{align}
}
\begin{proof}
We start by recalling that since $s_\pm(T)=b^T_\pm(0)$, then Proposition \ref{prop:s-lim} and Corollary \ref{cor:suppmu} imply that
\begin{align}\label{eq:b1}
\lim_{T\to\infty}s_\pm(T)=b_\pm^\infty\ge \mu_\pm
\end{align}
where we also recall that $\mu_\pm$ are the endpoints of $\supp\,\mu$ (see \eqref{not:mu}). Notice that by monotonicity of the boundaries if $s_+(t_0)=+\infty$, then $s_+(t)=+\infty$ for $t\ge t_0$ and the same is true for $s_-$.
 
Fix an arbitrary time horizon $T$ and denote $U^T=U$ as in \eqref{eq:U}. Throughout the proof all Stieltjes integrals with respect to measures $\nu$ and $\mu$ on $\RR$ are taken on open intervals, i.e.
$$\int_a^b\ldots\,\,=\int_{(a,b)}\ldots\qquad \text{for $a<b$.}$$

Let $f\in C^2_b(\RR)$ and consider the sequence $(f_n)_{n\ge0}\subset C^2_b(\RR)$ with $f_n(x)=f(x)$ for $|x|\le n$ and $f_n(x)=0$ for $|x|\ge n+1$. Notice that 
\begin{align}\label{eq:lebs}
\EE f(W^\nu_{\sigma_*\wedge T})=\lim_{n\to\infty}\EE f_n(W^\nu_{\sigma_*\wedge T})
\end{align}
by dominated convergence and the fact that $f_n\to f$ pointwise at all $x\in\RR$.

Now, for arbitrary $n$ a straightforward application of It\^o's formula gives
\begin{align}\label{eq:Ito}
\EE f_n(W^\nu_{\sigma_*\wedge T})=&\int_\RR f_n(y)\nu(dy)+\frac{1}{2}\EE\int_0^{\sigma_*\wedge T}{f_n''}(W^\nu_u)du\\
=&\int_\RR f_n(y)\nu(dy)+\frac{1}{2}\int_0^{T}{\EE\mathds{1}_{\{u\le\sigma_*\}}f_n''(W^\nu_u)du}\nonumber
\end{align}
Notice that $\sigma_*=\tau_-=\tilde{\tau}_-$ (see \eqref{eq:tau-}--\eqref{eq:equivtaus}) up to replacing the initial condition in the definitions of $\tau_-$ and $\tilde{\tau}_-$ by an independent random variable with distribution $\nu$. Recall the probabilistic representation \eqref{eq:Ut-trans} of $U_t$. Then we observe that for $u>0$
\begin{align*}
\EE\mathds{1}_{\{u\le\sigma_*\}}f_n''(W^\nu_u)=&\int_\RR f_n''(y)\Big(\int_\RR p^\CC_-(0,x,u,y)\nu(dx)\Big)dy\\
=&-\int^{s_+(u)}_{-s_-(u)} U_t(T-u,y)f_n''(y)dy\nonumber
\end{align*}
by \eqref{eq:pdeUt02}. An application of Fubini's theorem and the fact that $y\in(-s_-(u),s_+(u))\iff u>\varphi(y)$ (see \eqref{eq:invbarrier00}) gives
\begin{align}\label{eq:fub01}
\int_0^{T}\EE\mathds{1}_{\{u\le\sigma_*\}}f_n''(W^\nu_u)\,du=&-\int_0^T\Big(\int_\RR\mathds{1}_{\{y\in(-s_-(u),s_+(u))\}}U_t(T-u,y)f_n''(y)dy\Big)du\\
=&-\int_\RR f_n''(y)\Big(\int_0^T\mathds{1}_{\{\varphi(y)<u\}}U_t(T-u,y)du\Big)dy\nonumber\\
=&\int_\RR f_n''(y)\Big(U(0,y)-U(T-\varphi(y),y)\Big)dy\nonumber\\
=&\int_\RR f_n''(y)U(0,y)dy\nonumber
\end{align}
where in the last line we have also used that $(T-\varphi(y),y)=(T_*(y),y)\in\partial \CC_T$ and $U|_{\partial \CC_T}=0$ (see \eqref{eq:pdeU02}). Hence from \eqref{eq:Ito} and \eqref{eq:fub01}, and using that $U(0,y)=0$ for $y\notin(-s_-(T),s_+(T))$, we conclude
\begin{align}\label{eq:intbypart00}
\EE f_n(W^\nu_{\sigma_*\wedge T})=&\int_\RR f_n(y)\nu(dy)+\frac{1}{2}\int^{s_+(T)}_{-s_-(T)}f_n''(y)U(0,y)dy.
\end{align}
Notice that the last term above makes sense even if $s_\pm(T)=+\infty$, because $f_n$ is supported on a compact.

The left hand side of \eqref{eq:intbypart00} has an alternative representation and in fact one has
\begin{align*}
\EE f_n(W^\nu_{\sigma_*\wedge T})=&\EE \mathds{1}_{\{T\le\sigma_*\}}f_n(W^\nu_{T})+\EE \mathds{1}_{\{\sigma_*< T\}}f_n(W^\nu_{\sigma_*})\nonumber\\
=&\int_{-s_-(T)}^{s_+(T)}\left(\int_\RR f_n(y) p^\CC_-(0,x,T,y)\nu(dx)\right)dy+\EE \mathds{1}_{\{\sigma_*< T\}}f_n(W^\nu_{\sigma_*}).
\end{align*}
By using \eqref{eq:Ut-trans} once more we obtain
\begin{align}\label{eq:alt01}
&\int_{-s_-(T)}^{s_+(T)}\left(\int_\RR f_n(y) p^\CC_-(0,x,T,y)\nu(dx)\right)dy=-\int_{-s_-(T)}^{s_+(T)}f_n(y)U_t(0,y)dy\\
&=\int_{-s_-(T)}^{s_+(T)}f_n(y)\big(\nu-\mu\big)(dy)+\frac{1}{2}\int_{-s_-(T)}^{s_+(T)}f_n(y)U_{xx}(0,y)dy\nonumber
\end{align}
where the last expression follows from \eqref{eq:pdeU01}. 

To simplify the notation we set 
\begin{align*}
\Delta^-_T:=U_x(0,-s_-(T)+)\quad\text{and}\quad\Delta^+_T:=U_x(0,s_+(T)-)
\end{align*}
and notice that $\Delta^\pm_T$ may be non zero due to the lack of smooth-fit a the boundaries.
Now integrating by parts the last term on the right-hand side of \eqref{eq:alt01}, using \eqref{eq:pdeU02}, and the fact that $f_n(x)=f_n'(x)=f_n''(x)=0$ for $|x|\ge n+1$, we get
\begin{align}\label{eq:alt02}
\EE f_n(W^\nu_{\sigma_*\wedge T}) =&\EE \mathds{1}_{\{\sigma_*< T\}}f_n(W^\nu_{\sigma_*})-\int_{-s_-(T)}^{s_+(T)}f_n(y)\mu(dy)\nonumber\\
&+\int_\RR f_n(y)\nu(dy)+\frac{1}{2}\int_{-s_-(T)}^{s_+(T)}f_n''(y)U(0,y)dy\\
&+\frac{1}{2}\left[f_n(s_+(T))\Delta^+_T\mathds{1}_{\{s_+(T)\le n+1\}}-f_n(-s_-(T))\Delta^-_T\mathds{1}_{\{s_-(T)\le n+1\}}\right].\nonumber
\end{align}
Direct comparison of \eqref{eq:alt02} and \eqref{eq:intbypart00} then gives for all $n\ge 1$
\begin{align*}
\EE \mathds{1}_{\{\sigma_*< T\}}f_n(W^\nu_{\sigma_*})=&\int_{-s_-(T)}^{s_+(T)}f_n(y)\mu(dy)\nonumber\\
&-\frac{1}{2}\left[f_n(s_+(T))\Delta^+_T\mathds{1}_{\{s_+(T)\le n+1\}}-f_n(-s_-(T))\Delta^-_T\mathds{1}_{\{s_-(T)\le n+1\}}\right].
\end{align*}
Taking limits as $n\to\infty$ and using dominated convergence and pointwise convergence we have
\begin{align}\label{eq:skor0}
\EE \mathds{1}_{\{\sigma_*< T\}}f(W^\nu_{\sigma_*})=&\int_{-s_-(T)}^{s_+(T)}f(y)\mu(dy)\nonumber\\
&-\frac{1}{2}\left[f(s_+(T))\Delta^+_T\mathds{1}_{\{s_+(T)<+\infty\}}-f(-s_-(T))\Delta^-_T\mathds{1}_{\{s_-(T)<+\infty\}}\right].
\end{align}

It remains to take limits as $T\to\infty$.
If there exists $t_0>0$ such that $s_+(t_0)=s_-(t_0)=+\infty$, then the proof is complete because $s_+(t)=s_-(t)=+\infty$ for all $t\ge t_0$ and we only need to take $T\ge t_0$ in the last expression above. As it will be clarified in Corollary \ref{cor:finite} this situation never occurs in practice.

Let us now analyse the case in which there exists $t_0>0$ such that $s_+(t_0)=+\infty$ whereas $s_-(t)<+\infty$ for all $t>0$. The remaining cases, with $s_+(t)<+\infty$ for all $t>0$ and $s_-(t)\le +\infty$, may be addressed by the same methods. 
\vspace{+4pt}

\noindent\emph{Case 1.} [$\mu_-=+\infty$].\\
In this case \eqref{eq:b1} implies $s_-(T)\to\infty$ as $T\to\infty$ with $|s_-(T)|<+\infty$ for all $T>0$, and Corollary \ref{cor:smf2} implies $\Delta^-_T\to 0$. Hence taking limits as $T\to\infty$, using dominated convergence and \eqref{eq:skor0} we get
\begin{align}\label{eq:skor1}
\EE \mathds{1}_{\{\sigma_*<\infty\}}f(W^\nu_{\sigma_*})
=\int_\RR f(y)\mu(dy).
\end{align}
\vspace{+4pt}

\noindent\emph{Case 2.} [$\mu_-<+\infty$ and $\mu(\{-\mu_-\})=0$].\\
In this case $G'$ is continuous at $-\mu_-$, therefore \eqref{Vx-bound01} implies 
$\Delta^-_T\to 0$ as $T\to\infty$ since $s_-(T)\to\mu_-$. Hence arguing as in case 1 above we get \eqref{eq:skor1}.
\vspace{+4pt}

\noindent\emph{Case 3.} [$\mu_-<+\infty$ and $\mu(\{-\mu_-\})>0$].\\
This case requires more work. We approximate the measure $\mu$ via a sequence of measures $(\mu_k)_k$ whose cumulative distributions are constructed as follows: for each $k\ge 1$
\begin{align}\label{muk0}
F_{\mu_k}(x)=\left\{
\begin{array}{ll}
0, & x<-\mu_--\tfrac{1}{k}\\[+4pt]
k\cdot(x+\mu_-+\tfrac{1}{k})F_{\mu}(-\mu_-), & x\in[-\mu_--\tfrac{1}{k},-\mu_-)\\[+4pt]
F_\mu(x), & x\in[-\mu_-,+\infty).
\end{array}
\right.
\end{align}
Since $F_{\mu_k}(x)\to F_\mu$ as $k\to\infty$ for all points $x$ where $F_\mu$ is continuous, then $\mu_k\rightharpoonup\mu$ (see \cite{Sh}, Thm.~1, Ch.~3.1). It is important to notice that $F_{\mu_k}$ is continuous at the lower endpoint of its support, i.e.~at $-\mu^{(k)}_-:=-\mu_--1/k$. 

Letting $G_k$ be defined as in \eqref{eq:G} but with $F_\mu$ replaced by $F_{\mu_k}$ we can now consider the corresponding problem \eqref{eq:V} with value function denoted by $V_k$. Repeating the characterisation of the optimal stopping region for this problem we obtain the relative optimal boundaries $b^{(k)}_\pm$, which then produce two time-reversed boundaries $s^{(k)}_\pm$. In particular it is not hard to verify that \eqref{eq:b1} in this case implies that $\lim_{T\to\infty}s^{(k)}_-(T)= \mu_-+1/k$ and $\lim_{T\to\infty}s^{(k)}_+(T)= +\infty$ (for all $k$ sufficiently large). 

Since $F_{\mu_k}$ is continuous at $-\mu_-^{(k)}$ we argue as in case 2 above to get
\begin{align}\label{eq:skor-k0}
\EE \mathds{1}_{\{\sigma^{(k)}_*<\infty\}}f(W^\nu_{\sigma^{(k)}_*})=\int_\RR f(y)\mu_k(dy).
\end{align}
We claim here and prove in appendix that 
\begin{align}\label{limsk}
\lim_{k\to+\infty}\sigma^{(k)}_*=\sigma_*,\quad\text{$\PP$-a.s.}
\end{align}
so that taking limits in \eqref{eq:skor-k0}, again we obtain \eqref{eq:skor1}.
\vspace{+6pt}

Since \eqref{eq:skor1} holds for any $f\in C^2_b(\RR)$ we can extend to arbitrary continuous functions by a simple density argument. For any $f\in C_b(\RR)$ we consider an approximating sequence $(f_k)_{k\in\NN}\subset C^2_b(\RR)$ such that $f_k\to f$ pointwise as $k\to\infty$. For each $f_k$ the equation \eqref{eq:skor1} holds, then taking limits as $k\to\infty$ and using dominated convergence we obtain \eqref{eq:skb}.
\end{proof}

As corollaries of the above result we obtain interesting and non trivial regularity properties for the free-boundaries of problem \eqref{eq:V}. These are fine properties which are difficult to obtain in general via a direct probabilistic study of the optimal stopping problem. Namely we obtain: $(i)$ flat portions of either of the two boundaries may occur if and only if $\mu$ has an atom at the corresponding point (i.e.~$G_t+\tfrac{1}{2}G_{xx}$ has an atom. See Corollary \ref{cor:atoms}); $(ii)$ jumps of the boundaries may occur if and only if $F_\mu$ is flat on an interval (see \eqref{eq;jumpb}, \eqref{eq;jumpb2} and Corollary \ref{cor:jumps}). Note that the latter condition corresponds to saying that $G_t+\tfrac{1}{2}G_{xx}=0$ on an interval is a necessary and sufficient condition for a jump of the boundary (precisely of the size of the interval) and therefore it improves results in \cite{DeA15} where only necessity was proven. It should also be noticed that Cox and Peskir \cite{Cox-Pe13} proved $(i)$ and $(ii)$ constructively but did not discuss its implications for optimal stopping problems.
\begin{coroll}\label{cor:atoms}
Let $x_0\in\RR$ be such that $\mu(\{x_0\})>0$ then
\begin{itemize}
\item[ i)] if $x_0>0$ there exist $0\le t_1(x_0)<t_2(x_0)<+\infty$ such that $s_+(t)=x_0$ for $t\in(t_1,t_2]$,
\item[ii)] if $x_0<0$ there exist $0\le t_1(x_0)<t_2(x_0)<+\infty$ such that $s_-(t)=x_0$ for $t\in(t_1,t_2]$.
\end{itemize}
On the other hand, let either $s_+$ or $s_-$ be constant and equal to $x_0\in\RR$ on an interval $(t_1,t_2]$, then $\mu(\{x_0\})>0$.
\end{coroll}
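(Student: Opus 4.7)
The strategy is to read off the statement from the embedding identity of Theorem \ref{thm:sk}, using that $s_\pm$ are monotone and left-continuous and that the first exit time $\sigma_*$ has no atoms on $(0,+\infty)$.

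For the forward direction, assume $\mu(\{x_0\})>0$ with $x_0>0$; the case $x_0<0$ is symmetric with $s_-$ in place of $s_+$. Since $C_b(\RR)$ is measure-determining on finite Borel measures, Theorem \ref{thm:sk} yields
\[
\PP\bigl(W^\nu_{\sigma_*}=x_0,\ \sigma_*<+\infty\bigr)=\mu(\{x_0\})>0.
\]
I would first verify the identity $W^\nu_{\sigma_*}=s_+(\sigma_*)$ on this event. For every $t<\sigma_*$, monotonicity of $s_+$ gives $W^\nu_t<s_+(t)\le s_+(\sigma_*)$; continuity of $W^\nu$ and left-continuity of $s_+$ then force $W^\nu_{\sigma_*}\le s_+(\sigma_*)$. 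Combined with $W^\nu_{\sigma_*}=x_0>0>-s_-(\sigma_*)$ and the very definition of $\sigma_*$, this yields $W^\nu_{\sigma_*}=s_+(\sigma_*)=x_0$.

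Next I would argue that the level set $L:=\{t\ge 0:s_+(t)=x_0\}$ must have positive Lebesgue measure. Being a level set of a monotone left-continuous function, $L$ is itself an interval of the form $[a,b]$ or $(a,b]$ with $a\le b$. Suppose for contradiction that $a=b$, so $L=\{t_0\}$ and $\PP(\sigma_*\in L)=\PP(\sigma_*=t_0)$. For $t_0>0$ this equals zero, because $W^\nu_{t_0}$ has an absolutely continuous law (the convolution of $\nu$ with the Gaussian kernel at time $t_0$), so $\PP(W^\nu_{t_0}=x_0)=0$. For $t_0=0$ the event $\{\sigma_*=0,\ W^\nu_0=x_0\}$ would require $\nu(\{x_0\})>0$, which is impossible, since Assumption (D.2) together with $\mu(\{x_0\})>0$ forces $x_0\ge\hat{b}_+>a_+$ while $\supp\nu\subseteq[-a_-,a_+]$. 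Either alternative contradicts the positivity displayed above, so $a<b$, and any $t_1\in[a,b)$, $t_2\in(t_1,b]$ yield $s_+(t)=x_0$ on $(t_1,t_2]$ with $0\le t_1<t_2<+\infty$.

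The converse is immediate: if $s_+(t)=x_0$ on $(t_1,t_2]$ with $t_1<t_2<+\infty$, then conditioning on $\{\sigma_*>t_1\}$ (which has positive probability, since the continuation set remains non-degenerate up to $t_1$) and using standard estimates on Brownian exit from a strip, the event $\{t_1<\sigma_*\le t_2,\ W^\nu_{\sigma_*}=x_0\}$ has strictly positive probability; hence $\mu(\{x_0\})=\PP(W^\nu_{\sigma_*}=x_0,\ \sigma_*<+\infty)>0$ by Theorem \ref{thm:sk}. The main delicate step is the identification $W^\nu_{\sigma_*}=s_+(\sigma_*)$, which crucially exploits left-continuity of $s_+$ to exclude the possibility that $W^\nu$ exits across an upward jump of the boundary at a value different from $x_0$; the remainder reduces to monotonicity of $s_\pm$, absolute continuity of Brownian marginals at positive times, and elementary hitting probability estimates.
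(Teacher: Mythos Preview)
Your proof is correct and follows essentially the same route as the paper's own argument: both use Theorem~\ref{thm:sk} to identify $\mu(\{x_0\})=\PP(W^\nu_{\sigma_*}=x_0,\,\sigma_*<\infty)$, then reduce the forward direction to the fact that the law of $W^\nu_{t_0}$ is atomless for $t_0>0$, and handle the converse by a direct hitting-probability estimate on the flat stretch of the boundary. Your version is in fact slightly more explicit than the paper's in two places: you spell out the identification $W^\nu_{\sigma_*}=s_+(\sigma_*)$ on the event $\{W^\nu_{\sigma_*}=x_0>0\}$ (which the paper uses implicitly), and you treat the level set $L=\{t:s_+(t)=x_0\}$ as an interval rather than arguing point-by-point.
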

\begin{proof}
We prove $i)$ arguing by contradiction. First notice that if $x_0>0$ and $\mu(\{x_0\})>0$, then the upper boundary must reach $x_0$ for some $t_0>0$ due to Theorem \ref{thm:sk}. Let us assume that $s_+(t_0)=x_0$ for some $t_0>0$ and let us assume that $s_+$ is strictly increasing on $(t_0-\eps,t_0+\eps)$ for some $\eps>0$. Then $\mu(\{x_0\})=\PP(W^\nu_{\sigma_*}=x_0)=\PP(W^\nu_{t_0}=s_+(t_0))=0$, hence a contradiction.

To prove the final claim let us assume with no loss of generality $s_+(t)=x_0$ for $t\in(t_1,t_2]$, then $\mu(\{x_0\})=\PP(W^\nu_{\sigma_*}=x_0)=\PP(W^\nu_t=x_0\:\text{for some $t\in(t_1,t_2]$},\,\sigma_*>t_1)>0$.
\end{proof}

\begin{coroll}\label{cor:jumps}
Let $(a,b)\subset\RR$ be an open interval such that $\mu((a,b))=0$ and for any $\eps>0$ it holds $\mu((a,b+\eps))>0$, $\mu((a-\eps,b))>0$, i.e.~$a$ and $b$ are endpoints of a flat part of $F_\mu$. Then
\begin{itemize}
\item[ i)] If $s_+(t)=a$ for some $t>0$ then $s_+(t+)=b$;
\item[ii)] If $-s_-(t)=b$ for some $t>0$ then $-s_-(t+)=a$.
\end{itemize}
\end{coroll}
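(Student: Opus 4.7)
We prove $i)$; part $ii)$ is analogous, using \eqref{eq;jumpb2} in place of \eqref{eq;jumpb}. The plan combines the necessary jump condition from Theorem \ref{thm:OS-b} with the embedding identity of Theorem \ref{thm:sk}, arguing by contradiction. Fix $t>0$ with $s_+(t)=a$. Since $s_+$ is left-continuous and increasing, and cannot be identically $a$ on any half-line $[t_0,\infty)$ (otherwise $W^\nu_{\sigma_*}$ could never realise the mass $\mu((b,b+\eps))>0$ guaranteed by the standing hypothesis), the set $\{u>0:s_+(u)=a\}$ admits a largest element; relabelling $t$ to be this element, we may assume $s_+(u)>a$ for all $u>t$, so $s_+(t+)\ge a$.

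For the upper bound $s_+(t+)\le b$, assume $s_+(t+)>a$ (otherwise the inequality is trivial), and fix $T>0$ large enough that $s_+(\,\cdot\,)=b^T_+(T-\,\cdot\,)$ in a neighbourhood of $t$ (cf.~Definition \ref{def:spm}). Then $b^T_+$ has a downward jump at $T-t$ from $s_+(t+)$ to $a$, so \eqref{eq;jumpb} yields $\mu\bigl((a,s_+(t+))\bigr)=0$; were $s_+(t+)>b$, this would give $\mu((a,b+\eps))=0$ for some $\eps>0$, contradicting the hypothesis. For the matching inequality $s_+(t+)\ge b$, suppose for contradiction $s_+(t+)=c\in[a,b)$ and pick $\eta>0$ with $c+\eta<b$, so that monotonicity provides $\delta>0$ with $s_+(u)\in[c,c+\eta]$ for $u\in(t,t+\delta]$. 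If $s_+\equiv c$ on some subinterval of $(t,t+\delta]$, Corollary \ref{cor:atoms} forces $\mu(\{c\})>0$, impossible because $c\in(a,b)$ contradicts $\mu((a,b))=0$ while $c=a$ contradicts the maximality of $t$. Otherwise $s_+$ strictly increases through values in $(c,c+\eta)\subset(a,b)$ on a right neighbourhood of $t$.

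In this last case we derive the contradiction from Theorem \ref{thm:sk} via the probabilistic claim
\begin{align*}
\PP\bigl(\sigma_*\in(t,t+\delta],\,W^\nu_{\sigma_*}=s_+(\sigma_*)\bigr)>0.
\end{align*}
Granting the claim, strict monotonicity of $s_+$ forces $W^\nu_{\sigma_*}\in(c,c+\eta]\subset(a,b)$ on this event, whence $\mu((a,b))>0$, the desired contradiction. The main technical point is the above positive-probability estimate: one first observes that $\{\sigma_*>t,\,W^\nu_t\in(a-\rho,a)\}$ has positive probability for all small $\rho>0$ (since the marginal law of $W^\nu_t$ is absolutely continuous with strictly positive density, and conditioning on $\sigma_*>t$ preserves an absolutely continuous marginal on $(-s_-(t),a)$), then applies the strong Markov property together with a standard first-exit-in-a-strip computation for Brownian motion, using that $-s_-(u)$ remains uniformly separated from $a-\rho$ on $[t,t+\delta]$ for $\rho,\delta$ small enough to ensure a positive probability of exiting through the upper boundary within time $\delta$.
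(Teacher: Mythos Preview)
Your argument follows the same route as the paper's: for the lower bound $s_+(t+)\ge b$, you rule out flat stretches via Corollary~\ref{cor:atoms} and then derive a contradiction from Theorem~\ref{thm:sk} by showing that $W^\nu_{\sigma_*}$ lands in $(a,b)$ with positive probability when $s_+$ strictly increases through values in that range. The paper's proof is terser---it writes the positive-probability estimate directly as $\PP\big(\sup_{t\le s\le t'}W^\nu_s\ge s_+(t'),\,\sigma_*>t\big)>0$ without your strong-Markov/strip-exit unpacking---and in fact omits the upper bound $s_+(t+)\le b$ entirely, which you correctly supply via \eqref{eq;jumpb}. One caveat: your relabelling of $t$ to the largest element of $\{u:s_+(u)=a\}$ changes the point under consideration, so read literally you only establish the conclusion for that relabelled $t$; if the original $t$ lies strictly inside a flat stretch of $s_+$ at level $a$ (possible when $\mu(\{a\})>0$), then $s_+(t+)=a\ne b$. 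The paper's own proof glosses over this same edge case, so this is more a remark about the statement than a defect in your strategy.
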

\begin{proof}
It is sufficient to prove $i)$ since the argument is the same for $ii)$. Let us assume $s_+(t+)<b$, then there exists $t'>t$ such that $s_+(u)< b$ for $u\in(t,t')$. With no loss of generality we also assume $s_+$ strictly monotone on $(t,t')$ otherwise $\mu$ should have an atom on $(s_+(t),s_+(t'))$ (see Corollary \ref{cor:atoms}) hence contradicting that $\mu((a,b))=0$. Then we have 
\begin{align*}
\mu((a,b))\ge\mu \big(\big(s_+(t+),s_+(t')\big)\big)=&\PP\big(W^\nu_{\sigma_*}\in\big(s_+(t+),s_+(t')\big)\big)\\[+3pt]
\ge&\PP\big(\sup_{t\le s\le t'}W^\nu_s\ge s_+(t'),\,\sigma_*>t\big)>0,
\end{align*}
which contradicts the assumptions.
\end{proof}

Notice that for $f\equiv 1$ \eqref{eq:skb} gives $\PP(\sigma_*<+\infty)=\mu(\RR)=1$. As anticipated in the proof of Theorem \ref{thm:sk}, this implies that there cannot exist a time $t_0>0$ such that $s_+(t)=s_-(t)=+\infty$ for all $t\ge t_0$. 
\begin{coroll}\label{cor:finite}
For all $t>0$, either $s_+(t)<+\infty$ or $s_-(t)<+\infty$ or both.
\end{coroll}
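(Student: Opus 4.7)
The plan is to argue by contradiction: suppose there exists $t_0>0$ with $s_+(t_0)=s_-(t_0)=+\infty$. The monotonicity of $s_\pm$ (Definition \ref{def:spm}) immediately extends this to $s_+(t)=s_-(t)=+\infty$ for all $t\ge t_0$, so $\CC^-_\infty$ contains the strip $[t_0,+\infty)\times\RR$; in particular $(t,W^\nu_t)$ cannot leave $\CC^-_\infty$ after $t_0$, giving the identity $\{\sigma_*>t_0\}=\{\sigma_*=+\infty\}$. Invoking Theorem \ref{thm:sk} with $f\equiv 1$ yields $\PP(\sigma_*<+\infty)=\mu(\RR)=1$, hence $\PP(\sigma_*>t_0)=\PP(\sigma_*=+\infty)=0$.

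The contradiction will come from showing $\PP(\sigma_*>t_0)>0$. Combining Definition \ref{def:spm} with the terminal condition $b^T_\pm(T-)=\hat{b}_\pm$ of Theorem \ref{thm:OS} gives $s_\pm(0+)=\hat{b}_\pm$, so $(-\hat b_-,\hat b_+)\subseteq(-s_-(t),s_+(t))$ for every $t\in(0,t_0]$. In the generic case $\nu((-\hat b_-,\hat b_+))>0$, pick an interior point $x_0$ and $\eps>0$ small enough that $[x_0-\eps,x_0+\eps]\subset(-\hat b_-,\hat b_+)$; the standard small-ball estimate for Brownian motion gives $\PP(\sup_{0\le t\le t_0}|W^\nu_t-x_0|<\eps\mid W^\nu_0=x_0)>0$, and integrating against $\nu$ restricted to a small neighborhood of $x_0$ yields $\PP(\sigma_*>t_0)>0$, contradicting the previous step.

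The main obstacle is the degenerate case $\nu(\{-\hat b_-,\hat b_+\})=1$, in which (by the support constraints $a_\pm\le\hat b_\pm$) one is forced to have $a_\pm=\hat b_\pm$ and $W^\nu_0\in\{\pm\hat b_\pm\}$ almost surely. Here $W^\nu$ starts on the boundary of the initial continuation interval, so the small-ball argument above does not apply directly. However, Assumption (D.2) forces $\mu(\{\pm\hat b_\pm\})=0$, and Corollary \ref{cor:atoms} then rules out any flat portion of $s_\pm$ at the value $\hat b_\pm$, so $s_\pm(t)>\hat b_\pm$ strictly for all $t>0$. This strict opening of the continuation region away from $W^\nu_0$ allows a refined entry-time estimate comparing the Brownian fluctuations with the growth of $s_\pm(t)-\hat b_\pm$ near $t=0$, which again produces $\PP(\sigma_*>t_0)>0$ and closes the argument.
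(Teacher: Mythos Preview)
Your approach coincides with the paper's: both derive a contradiction from Theorem~\ref{thm:sk}, and the paper's argument is literally one sentence (``$\PP(\sigma_*<+\infty)=1$ implies\ldots''). Your identification of the missing step---that one must actually show $\PP(\sigma_*>t_0)>0$---is exactly the right complement, and your generic case $\nu((-\hat b_-,\hat b_+))>0$ is handled correctly.

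The genuine gap is the degenerate case. You correctly use (D.2) and Corollary~\ref{cor:atoms} to get $s_\pm(t)>\hat b_\pm$ for every $t>0$, but you then appeal to an unspecified ``refined entry-time estimate comparing the Brownian fluctuations with the growth of $s_\pm(t)-\hat b_\pm$ near $t=0$''. No such estimate is available: nothing you cite controls that growth rate, and if it were $o(\sqrt{t})$ a Brownian motion started at $\hat b_\pm$ would exit immediately with probability one, so the step fails as written. A clean repair avoids growth rates altogether. Theorem~\ref{thm:sk} gives $W^\nu_{\sigma_*}\sim\mu$, while (D.1)--(D.2) force $\nu\neq\mu$ (indeed $\nu(\{\pm\hat b_\pm\})>0=\mu(\{\pm\hat b_\pm\})$ in your degenerate case); hence $\PP(\sigma_*=0)<1$ and so $\PP(\sigma_*>\eps)>0$ for some $\eps>0$. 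On $\{\sigma_*>\eps\}$ the point $W^\nu_\eps$ lies strictly inside $(-s_-(\eps),s_+(\eps))$, which by monotonicity is contained in $(-s_-(t),s_+(t))$ for all $t\ge\eps$; now apply your own small-ball argument from time $\eps$ via the Markov property to obtain $\PP(\sigma_*>t_0)>0$.
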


We conclude the paper with a discussion on the role of Assumption (D.1).

\begin{remark}\label{rem:ncvx}
As anticipated in Section \ref{sec:sett}, although Assumption (D.1) is not necessary to implement the methods illustrated in this paper, it is a convenient one for the clarity of exposition. Here we illustrate how our methods may be used to deal with a pair $\nu$ and $\mu$ which does not meet (D.1).

Take
\begin{align}
\nu(dx)=\frac{1}{2}\left(\delta_{-1}(x)+\delta_{1}(x)\right)dx,\quad \mu(dx)=\mathds{1}_{\left[-\tfrac{1}{2},\tfrac{1}{2}\right]}(x)dx.
\end{align}  
Then $G$ is non positive, it equals $-3/4$ on $(-\infty,-1)\cup(1,+\infty)$, it is increasing on $(-1,0)$ and decreasing on $(0,1)$, with maximum value $G(0)=0$. Arguing as in Proposition \ref{thm:OSinfiniteH}, for $T=+\infty$ we obtain $v(x)=0$ and $\CC_\infty=\RR\setminus\{0\}$. 

For $T<+\infty$, using the same arguments as in Section \ref{sec:optSt} one finds a non-connected continuation set of the form 
\begin{align}
\CC_T=\{(t,x)\in[0,T)\times\RR\,:\,x\in(-\infty,-b^T_-(t))\cup(b^T_+(t),+\infty)\}
\end{align}
where the functions $b^T_\pm$ are continuous on $[0,T)$, increasing and positive, with $b^T_\pm(T-)=\tfrac{1}{2}$.
Since $G'$ is continuous on $[-\tfrac{1}{2},\tfrac{1}{2}]$ we also have $V\in C^1([0,T)\times\RR)$ by the same arguments as those used in Section \ref{sec:C1}.

In the same spirit of Definition \ref{def:spm} we define $s_\pm$, continuous and decreasing, as the time reversal of $b_\pm^T$ for $T>0$. Notice that $s_\pm(t)\ge 0$ for all $t\ge0$ and $s_\pm(+\infty)=0$. Following Section \ref{sec:sk} we have
\begin{align}
\CC_\infty^-=\{(t,x)\in[0,+\infty)\times\RR\,:\,x\in(-\infty,-s_-(t))\cup(s_+(t),+\infty)\}.
\end{align}
Due to the fact that $\CC_T$ is not connected and $b^T_\pm\ge 0$, then for $(t,x)\in \CC_T$ the time-space Brownian motion $(t+s,x+B_s)_{s\ge 0}$ can only enter the stopping set $\DD_T$, by crossing $b^T_+$ if $x> 0$, and by crossing $-b^T_-$ if $x<0$. 

Proposition \ref{prop:Ut} holds in the same form and its proof can be repeated up to minor changes. In particular \eqref{eq:Ut-trans} reads
\begin{align}
-2 U^T_t(t,x)=p^{\CC}_-(0,-1,T-t,x)+p^{\CC}_-(0,1,T-t,x),
\end{align}
where indeed we notice that $p^{\CC}_-(0,-1,T-t,x)=0$ for $x>0$ and $p^{\CC}_-(0,1,T-t,x)=0$ for $x<0$, because $\CC^-_\infty$ is not connected.
Using the latter representation one can repeat step by step the arguments of proof of Theorem \ref{thm:sk}, with obvious changes, to obtain that 
\eqref{eq:sk} holds with 
$$\sigma_*:=\inf\{t>0\,:\,W^\nu_t\in[-s_-(t),s_+(t)]\}.$$
\end{remark}


\appendix

\section{Appendix}
\renewcommand{\theequation}{A-\arabic{equation}}

\begin{proof}[Proof of eq.~\eqref{eq:pde01}--\eqref{eq:pde03}]\label{app:freeb}

Condition \eqref{eq:pde02} and \eqref{eq:pde03} are obvious whereas to prove \eqref{eq:pde01} we use a well known argument (see for instance \cite[Sec.~7.1]{Pes-Shir}).
Since $\CC_T$ is an open set and it is not empty (see step 2 in the proof of Theorem \ref{thm:OS-b}) we can consider an open, bounded rectangular domain $\mathcal{U}\subset \CC_T$ with parabolic boundary $\partial_P\mathcal{U}$. Then the following boundary value problem
\begin{align}\label{Cauchy2}
u_t+\tfrac{1}{2}u_{xx}=0\quad\text{on $\mathcal{U}$ with $u=V$ on $\partial_P\mathcal{U}$}
\end{align}
admits a unique classical solution $u\in C^{1,2}(\mathcal{U})\cap C(\overline{\mathcal{U}})$ (cf.~for instance \cite[Thm.~9, Sec.~4, Ch.~3]{Fri08}). Fix $(t,x)\in\mathcal{U}$ and denote by $\tau_{\mathcal{U}}$ the first exit time of $(t+s,x+B_s)_{s\ge 0}$ from $\mathcal{U}$. Then Dynkin's formula gives
\begin{align*}
u(t,x)=\EE\left[ u(t+ \tau_{\mathcal{U}},x+B_{\tau_{\mathcal{U}}})\right]=\EE\left[ V(t+ \tau_{\mathcal{U}},x+B_{\tau_{\mathcal{U}}})\right]=V(t,x)
\end{align*}
where the last equality follows from the fact that $V(t+ s\wedge\tau_*,x+B_{s\wedge\tau_*})$, $s\ge 0$ is a martingale according to standard optimal stopping theory and $\tau_{\mathcal{U}}\le \tau_*$, $\PP$-a.s. 

Since $\mathcal{U}$ is arbitrary in $\CC_T$ the equation \eqref{eq:pde01} follows.
\end{proof}

\begin{proof}[Proof of Lemma \ref{lem:contST}]
Because of \eqref{st-interior} we have $\tilde \tau_*(t,x)=0$, $\PP$-a.s. In particular this means that for any fixed $\omega\in\Omega\setminus\mathcal N$, with $\mathcal N$ a null set, and for any $\delta>0$ there is $s=s(\omega)\in(0,\delta)$ such that $(t+s,x+B_s(\omega))\in\DD^\circ_T$. Since $(t_n+s,x_n+B_s(\omega))\to (t+s,x+B_s(\omega))$ as $n\to\infty$, and $\DD^\circ_T$ is open, then there exists $N_\omega\in\mathbb{N}$ such that $(t_n+s,x_n+B_s(\omega))\in\DD^\circ_T$ for all $n\ge N_\omega$. Thus $\tilde \tau_*(t_n,x_n)(\omega)<\delta$ for all $n\ge N_\omega$ and 
\begin{align*}
\limsup_{n\to\infty}\tilde \tau_*(t_n,x_n)(\omega)<\delta.
\end{align*}
Recalling \eqref{st-interior} and that $\delta$ was arbitrary we obtain
\begin{align*}
\lim_{n\to\infty} \tau_*(t_n,x_n)(\omega)=\lim_{n\to\infty}\tilde \tau_*(t_n,x_n)(\omega)=0.
\end{align*}
Since $\omega$ was also arbitrary we conclude the proof.
\end{proof}

\begin{proof}[Proof of Lemma \ref{cor:contST-t}]
For simplicity set $\tau_*=\tau_*(t,x)$ and $\tau_h=\tau_*(t_h,x)$. By monotonicity of the optimal boundaries it is not hard to see that $(\tau_h)_{h\ge0}$ forms a family which is decreasing in $h$ with $\tau_h\ge\tau_*$ for all $h$, $\PP$-a.s. We denote $\tau_\infty:=\lim_{h\to\infty}\tau_h$, $\PP$-a.s., so that $\tau_\infty\ge\tau_*$ and arguing by contradiction we assume that there exists $\Omega_0\subset\Omega$ such that $\PP(\Omega_0)>0$ and $\tau_\infty-\tau_*>0$ on $\Omega_0$. Notice that $\tau_*<T-t$ on $\Omega_0$, otherwise $\tau_\infty>\tau_*$ leads immediately to a contradiction. 

Let us pick $\omega\in\Omega_0$ and with no loss of generality let us assume that 
\begin{align}\label{eq:up0}
x+B_{\tau_*}(\omega)\ge b_+(t+\tau_*(\omega))
\end{align}
(similar arguments hold for $b_-$). Since we are on $\Omega_0$, then there exists $\delta_\omega>0$ such that $\tau_\infty(\omega)-\tau_*(\omega)\ge\delta_\omega$ and for all $h>0$ it must be
\begin{align}\label{th1}
x+B_{\tau_*+s}(\omega)< b_+(t_h+\tau_*(\omega)+s),\qquad s\in(0,\delta_\omega/2].
\end{align}
For any $\eps\in(0,\delta_\omega/2)$ we find $h_\eps$ sufficiently large to get $t-t_h<\eps$ for $h\ge h_\eps$ and consequently $t_h+s\ge t$ for $s\in(\eps,\delta_\omega/2]$. Monotonicity of $b_+$ implies that for $h\ge h_\eps$ we have
\begin{align*}
b_+(t_h+\tau_*(\omega)+s)\le b_+(t+\tau_*(\omega)) \qquad s\in(\eps,\delta_\omega/2]
\end{align*}
and hence, by \eqref{th1}, also 
\begin{align}\label{th2}
x+B_{\tau_*+s}(\omega)< b_+(t+\tau_*(\omega)),\qquad s\in(\eps,\delta_\omega/2].
\end{align}
Letting now $\eps\to 0$ in \eqref{th2}, the latter and \eqref{eq:up0} would imply
$B_{\tau_*+s}(\omega)-B_{\tau_*}(\omega)\le 0$ for $s\in (0,\delta_\omega/2]$, which contradicts the law of iterated logarithm.
\end{proof}

\begin{proof}[Proof of Proposition \ref{prop:sm-fit}]
We only provide a full proof for \eqref{Vx-bound00} as the argument for \eqref{Vx-bound01} is completely analogous up to trivial changes. 
Let $t\in[0,T)$ and $x:=b_+(t)<+\infty$ then it is easy to see that
\begin{align}\label{smf-b00}
\limsup_{\eps\to0}\frac{1}{\eps}\left(V(t,x)-V(t,x-\eps)\right)\le \limsup_{\eps\to 0}\frac{1}{\eps}\left(G(x)-G(x-\eps)\right)=G'(x-).
\end{align} 
Moreover \eqref{eq:pde01} implies $V_{xx}=-2V_t\ge 0$ in $\CC_T$ so that $V_x(t,\,\cdot\,)$ is increasing for all $x\in(-b_-(t),b_+(t))$ and its limit at $x=b_+(t)$ is well defined. Hence \eqref{smf-b00} implies
\begin{align}\label{smf-b01}
V_x(t,x-)\le G'(x-).
\end{align}
For the other inequality in \eqref{Vx-bound00} we denote 
\begin{align*}
\tau_\eps:=&\inf\{s\in[0,T-t]\,:\,(t+s, B^{x-\eps}_s)\in\DD_T\},\\[+3pt]
\tau_{a_-}:=&\inf\{s\ge 0\,:\,B^{x-\eps}_s\le -a_-\},
\end{align*}
set $\rho_\eps:=\tau_\eps\wedge\tau_{a_-}$, and recall that 
\begin{align*}
Y^\eps_s:=V(t+s\wedge\rho_\eps,B^{x-\eps}_{s\wedge\rho_\eps})\quad\text{is a martingale},
\end{align*}
whereas $Y_s:=V(t+s,B^{x}_s)$ is a supermartingale for $s\in[0,T-t]$. We notice that 
\begin{align}\label{tau=1}
\PP(\tau_{a_-}>0)=1.
\end{align}
If $-a_-<a_+$ the result is trivial. If $a_-=a_+=0$, then $\nu(\{0\})=1$ and $b_+(t)>0$ for $t\in[0,T)$ by (iv) in Theorem \ref{thm:OS-b}. Hence $b_+(t)-\eps>0$ for $\eps$ sufficiently small and \eqref{tau=1} holds. 

Using the (super)martingale property of $Y$ and $Y^\eps$ we have
\begin{align}\label{smf-b03}
V(t,x)-V(t,x-\eps)\ge&\, \EE\left[V(t+\rho_\eps,B^x_{\rho_\eps})-V(t+\rho_\eps,B^{x-\eps}_{\rho_\eps})\right]\\[+4pt]
=&\,\EE\left[\mathds{1}_{\{\tau_\eps<\tau_{a_-}\}\cap\{\rho_\eps\le\delta\}}\left(G(B^x_{\tau_\eps})-G(B^{x-\eps}_{\tau_\eps})\right)\right]\nonumber\\[+4pt]
&+\EE\left[\mathds{1}_{\{\tau_\eps>\tau_{a_-}\}\cap\{\rho_\eps\le\delta\}}\left(V(t+\tau_{a_-},B^x_{\tau_{a_-}})-V(t+\tau_{a_-},B^{x-\eps}_{\tau_{a_-}})\right)\right]\nonumber\\[+4pt]
&+\EE\left[\mathds{1}_{\{\rho_\eps>\delta\}}\left(V(t+\rho_\eps,B^x_{\rho_\eps})-V(t+\rho_\eps,B^{x-\eps}_{\rho_\eps})\right)\right].\nonumber
\end{align}
Recalling the Lipschitz continuity of $V(t,\cdot)$ (Proposition \ref{prop:V}) and since $B^x_{\rho}-B^{x-\eps}_{\rho}=\eps$ $\PP$-a.s.~for any stopping time $\rho$, we obtain the lower bounds
\begin{align*}
&\EE\left[\mathds{1}_{\{\tau_\eps>\tau_{a_-}\}\cap\{\rho_\eps\le\delta\}}\left(V(t+\tau_{a_-},B^x_{\tau_{a_-}})-V(t+\tau_{a_-},B^{x-\eps}_{\tau_{a_-}})\right)\right]\ge-\eps\,L_G\PP(\tau_\eps>\tau_{a_-},\,\rho_\eps \le \delta), \nonumber\\[+4pt]
&\EE\left[\mathds{1}_{\{\rho_\eps>\delta\}}\left(V(t+\rho_\eps,B^x_{\rho_\eps})-V(t+\rho_\eps,B^{x-\eps}_{\rho_\eps})\right)\right]\ge -\eps\, L_G\PP(\rho_\eps>\delta).
\end{align*}
We notice that since $b_+$ is decreasing, then on the event $\{\rho_\eps\le \delta\}\cap\{\tau_\eps<\tau_{a_-}\}$ one has $x-\eps+B_{\tau_\eps}\ge b_+(t+\delta)\ge a_+$. Moreover $G$ is concave and increasing on $[a_+,+\infty)$ and therefore also on the interval $(B^{x-\eps}_{\tau_\eps},B^x_{\tau_\eps})$ when considering the event $\{\rho_\eps\le \delta\}\cap\{\tau_\eps<\tau_{a_-}\}$. Using these facts we obtain
\begin{align*}
&\EE\left[\mathds{1}_{\{\tau_\eps<\tau_{a_-}\}\cap\{\rho_\eps\le\delta\}}\left(G(B^x_{\tau_\eps})-G(B^{x-\eps}_{\tau_\eps})\right)\right]\\[+4pt]
&\ge \eps\,\EE\left[\mathds{1}_{\{\tau_\eps<\tau_{a_-}\}\cap\{\rho_\eps\le\delta\}}G'(x+B_{\tau_\eps})\right]\ge \eps\,G'(b_+(t)+\eps)\PP(\tau_\eps<\tau_{a_-},\rho_\eps\le\delta)
\end{align*}
where for the last inequality we have used again concavity of $G$ and that $x-\eps+B_{\tau_\eps}\le b_+(t)$ because the boundary $b_+$ is monotonic decreasing.

Plugging in \eqref{smf-b03} the lower bounds obtained for the terms on the right-hand side, and dividing by $\eps$ we find 
\begin{align*}
\frac{1}{\eps}(V(t,x)-V(t,x-\eps))\ge &\,G'(b_+(t)+\eps)\PP\left(\tau_\eps<\tau_{a_-},\,\rho_\eps\le\delta \right)\\[+4pt]
&-L_G\left(\PP(\tau_\eps>\tau_{a_-},\,\rho_\eps \le \delta)+\PP(\rho_\eps>\delta)\right).
\end{align*}
Taking liminf as $\eps\to0$ and using that $\tau_\eps\to 0$ $\PP$-a.s.~due to Lemma \ref{lem:contST} we immediately obtain
\begin{align*}
V_x(t,x-)\ge G'(b_+(t)+) = G'(x).
\end{align*} 
The latter and \eqref{smf-b01} prove \eqref{Vx-bound00}. 
\end{proof}

\begin{proof}[Proof of Lemma \ref{lem:sm-fit2}]
We will only give details for the limits involving $b_+$ as those involving $b_-$ can be obtained in the same way. 
\vspace{+4pt}

\noindent\emph{Step 1 (Proof of (ii)).} If $\mu(\{\hat{b}_+\})=\nu(\{\hat b_+\})=0$ then $G'$ is continuous at $\hat{b}_+$. Moreover since $b_+(t)\to \hat{b}_+$ as $t\to T$ we can take limits as $t\to T$ in \eqref{Vx-bound00} and obtain \eqref{Vx-T00}. If instead $\nu(\{\hat b_+\})>\mu(\{\hat{b}_+\})=0$, i.e.~$a_+=\hat b_+$ and $\nu$ has an atom at that point, then (iv) of Theorem \ref{thm:OS-b} implies that $b_+(t)$ converges to $a_+$, as $t\to T$, strictly from above. Hence, by right-continuity of $G'$ we get 
\begin{align*}
\lim_{t\to T}G'(b_+(t))=\lim_{t\to T}G'(b_+(t)-)=G'(\hat b_+)
\end{align*}
and (ii) holds due to \eqref{Vx-bound00}.
\vspace{+4pt}

\noindent\emph{Step 2 (Proof of (i)).} The more interesting case is when $\mu(\{\hat{b}_+\})>0$ and therefore $\hat b_+>a_+$ due to Assumption (D.2). For this part of the proof it is convenient to use the notation $\EE_{t,x}[\,\cdot\,]=\EE[\,\cdot\,|B_t=x]$ and to think of $\Omega$ as the space of continuous functions, with $\theta_\cdot:\Omega\to\Omega$ denoting the shifting operator. 

In particular we take $t\in [t_+,T)$ so that $b_+(t)=\hat{b}_+$ and $V(t,b_+(t))=G(b_+(t))$ (see Lemma \ref{lem:flatb}). We also pick $a\in(a_+,\hat{b}_+)$ and denote $\tau_a:=\inf\{s\ge0\,:\,X_s\le a\}$. 
For $\eps>0$ such that $\hat{b}_+-\eps>a$ we have
\begin{align}\label{up0}
V(t,\hat{b}_+)-V(t,\hat{b}_+-\eps)=G(\hat{b}_+)-G(\hat{b}_+-\eps)-\int_{\RR}\EE_{t,\hat{b}_+-\eps}\big[L^z_{\tau_*}\big](\nu-\mu)(dz)
\end{align}
with $\tau_*$ as in \eqref{eq:optst}. To find a lower bound for the last term in \eqref{up0} we notice that $L^z_{\tau_*}\mathds{1}_{\{\tau_*\le\tau_a\}}\nu(dz)=0$ and $L^z_{\tau_a}\nu(dz)=0$, $\PP_{t,\hat{b}_+-\eps}$-a.s.~and use the strong Markov property as follows.
\begin{align*}
\int_{\RR}\EE_{t,\hat{b}_+-\eps}\big[L^z_{\tau_*}\big](\nu-\mu)(dz)\le&\, \int_{\RR}\EE_{t,\hat{b}_+-\eps}\big[\mathds{1}_{\{\tau_*>\tau_a\}}L^z_{\tau_*}\big]\nu(dz)\\[+4pt]
=&\, \int_{\RR}\EE_{t,\hat{b}_+-\eps}\big[\mathds{1}_{\{\tau_*>\tau_a\}}\left(L^z_{\tau_a}+\EE_{t,\hat{b}_+-\eps}\big[L^z_{\tau_*}\circ\theta_{\tau_a}\big|\cF_{\tau_a}\big]\right)\big]\nu(dz)\nonumber\\[+4pt]
=&\, \int_{\RR}\EE_{t,\hat{b}_+-\eps}\big[\mathds{1}_{\{\tau_*>\tau_a\}}\EE_{\tau_a,a}\big[L^z_{\tau_*}\big]\big]\nu(dz)\nonumber\\[+4pt]
\le&\, \PP_{t,\hat{b}_+-\eps}(\tau_a<\tau_*)\int_{\RR}\sup_{t\le s\le T}\EE_{s,a}\big[L^z_{\tau_*}\big]\nu(dz)\nonumber.
\end{align*}
Setting $g(t):=\int_{\RR}\sup_{t\le s\le T}\EE_{s,a}\big[L^z_{\tau_*}\big]\nu(dz)$ and substituting the above bound in \eqref{up0} we get
\begin{align}\label{up3}
V(t,\hat{b}_+)-V(t,\hat{b}_+-\eps)\ge G(\hat{b}_+)-G(\hat{b}_+-\eps)-g(t)\PP_{t,\hat{b}_+-\eps}(\tau_a<\tau_*).
\end{align}
Notice that since $b_+(t)=\hat{b}_+$ for all $t\in[t_+,T]$ then $\{\tau_a<\tau_*\}\subset\{\tau_a<\tau_{\hat{b}_+}\wedge(T-t)\}$, $\PP_{t,\hat{b}_+-\eps}$-a.s.~where $\tau_{\hat{b}_+}:=\inf\{s \ge 0\,:\,X_s\ge \hat{b}_+\}$. Therefore 
\begin{align*}
\PP_{t,\hat{b}_+-\eps}(\tau_a<\tau_*)\le\PP_{t,\hat{b}_+-\eps}(\tau_a<\tau_{\hat{b}_+}\wedge(T-t))\le\PP_{t,\hat{b}_+-\eps}(\tau_a<\tau_{\hat{b}_+})=\frac{\eps}{\hat{b}_+-a}
\end{align*}
where for the last equality follows by well known properties of the scale function of Brownian motion. Plugging the above in \eqref{up3}, dividing by $\eps$ and taking limits as $\eps\to0$ gives
\begin{align}
V_x(t,\hat{b}_+-)\ge G'(\hat{b}_+-)-g(t)(\hat{b}_+-a)^{-1}.
\end{align}
Now letting $t\to T$ and noticing that $g(t)\to0$ we obtain \eqref{Vx-T00} upon recalling \eqref{Vx-bound00}.
\end{proof}

\begin{proof}[Proof of Lemma \ref{lem:Vx-asympt}]
We only prove the statement for $\supp\{\mu\}\cap\RR_+=\emptyset$ as the arguments for the the other case are the same. Let $t\in[0,T]$ and $x>0$, so that $(t,x)\in\CC_T$ and $(t,x+\eps)\in\CC_T$ for all $\eps>0$, since the stopping set is all contained in $[0,T]\times\RR_-$ (recall $(ii)$ of Theorem \ref{thm:OS-b}).

For $\tau_*=\tau_*(t,x)$ we have  
\begin{align*}
\frac{1}{\eps}\left(V(t,x+\eps)-V(t,x)\right)\ge \frac{1}{\eps}\EE\Big[G(x+\eps+B_{\tau_*})-G(x+B_{\tau_*})\Big],
\end{align*}
and
\begin{align*}
\frac{1}{\eps}\left(V(t,x)-V(t,x-\eps)\right)\le \frac{1}{\eps}\EE\Big[G(x+B_{\tau_*})-G(x-\eps+B_{\tau_*})\Big].
\end{align*}
Since $V\in C^{1,2}$ inside $\CC_T$ and $G'$ is right-continuous then taking limits as $\eps\to0$ gives 
\begin{align}\label{bb}
\EE_x G'(B_{\tau_*})\le V_x(t,x)\le \EE_x G'(B_{\tau_*}-).
\end{align}
Notice that $G'(x)\to 0$ as $x\to\infty$ (recall that $\nu(\{+\infty\})=0$), hence for any $\eps>0$ there exists $x_\eps>0$ such that $|G'(x)|\le \eps$ for $x\in[x_\eps,+\infty)$. We fix $\eps>0$ and with no loss of generality consider $x>x_\eps$. Then we have
\begin{align*}
\EE_x\big| G'(B_{\tau_*})\big|=&\EE_x \big[\big|G'(B_{\tau_*})\big|\mathds{1}_{\{\tau_*<T-t\}}+\big|G'(B_{T-t})\big|\mathds{1}_{\{\tau_*=T-t\}}\big]\\[+4pt]
\le& L_G\PP_x(\tau_*<T-t)+\EE_x\big[\big|G'(B_{T-t})\big|\mathds{1}_{\{\tau_*=T-t\}\cap\{B_{T-t}\le x_\eps\}}\big]+\eps\\[+4pt]
\le& L_G\Big(\PP_x(\tau_*<T-t)+\PP_x(B_{T-t}\le x_\eps)\Big)+\eps.
\end{align*}
An analogous inequality clearly holds for $\EE_x\big| G'(B_{\tau_*}-)\big|$. 

Since $x>x_\eps$, then both 
$\PP_x(\tau_*<T-t)$ and $\PP_x(B_{T-t}\le x_\eps)$ are bounded from above by $\PP(\sup_{0\le s\le T}\,\big|B_s\big|\ge |x_\eps-x|)$. Therefore from \eqref{bb} and the estimates above we obtain
\begin{align*}
\sup_{0\le t\le T}\big|V_x(t,x)\big|\le 2L_G\PP\Big(\sup_{0\le s\le T}\,\big|B_s\big|\ge |x_\eps-x|\Big)+\eps.
\end{align*} 
Letting $x\to\infty$ and recalling that $\eps>0$ was arbitrary the proof is completed. 
\end{proof}

\begin{proof}[Proof of Lemma \ref{lemma:Hunt}]
The proof is a generalisation of the proof of \cite[Thm.~24.7]{Kal} and it will be sufficient to give it in the case with $t=0$ and $s=T$. In particular it is enough to show that for any $A,B\in\mathcal{B}(\RR)$ with $A\subseteq[-b_-(0),b_+(0)]$ and $B\subseteq[-s_-(0),s_+(0)]$ one has
\begin{align}\label{eq:hunt00}
\int_A \PP_x(B_T\in B\,,\,T\le\tau_*)dx=\int_B \PP_x(W_T\in A\,,\,T\le\tau_-)dx.
\end{align}
Recalling \eqref{st-interior} and \eqref{eq:equivtaus}, we find it convenient (with no loss of generality) to prove \eqref{eq:hunt00} with $\tilde \tau_*$ and $\tilde \tau_-$ instead of $\tau_*$ and $\tau_-$.

For the sake of this proof and with no loss of generality we can consider the canonical space $\Omega= C([0,\infty))$ with the Borel $\sigma$-algebra $\cF=\mathcal{B}\big(C([0,\infty))\big)$. Given that \eqref{eq:hunt00} only involves the laws of $B$ and $W$ we can simplify the notation and consider a single Brownian motion $X=(X_t)_{t\ge0}$ defined as the coordinate process $X_t(\omega)=\omega(t)$ with its filtration $(\cF^X_t)_{t\ge0}$ augmented with the $\PP$-null sets. With a slight abuse of notation, here we denote by $\PP$ the Wiener measure on $(\Omega,\cF)$. In this setting $\tilde \tau_*$ coincides with the first exit time of $(X_t)_{t\ge0}$ from $[-b_-(t),b_+(t)]$, $t\in[0,T]$ and $\tilde{\tau}_-$ coincides with the first (strictly positive) exit time of $(X_t)_{t\ge0}$ from $[-s_-(t),s_+(t)]$, $t\ge0$.

Due to \eqref{st-interior} and \eqref{eq:equivtaus2} it is not difficult to see that 
\begin{align}\label{eq:hunt01}
\{T\le\tilde{\tau}_*\}=\bigcap_{q\in[0,T]\cap\mathbb{Q}}\big\{X_q\in[-b_-(q),b_+(q)]\big\}.
\end{align}
and 
\begin{align}\label{eq:hunt03}
\{T\le\tilde{\tau}_-\}=\bigcap_{q\in[0,T]\cap\mathbb{Q}}\big\{X_q\in[-s_-(q),s_+(q)]\big\}.
\end{align}

For simplicity and without loss of generality we assume $T\in\mathbb{Q}$. Now, 
we can consider a sequence $(\pi_n)_{n\in\mathbb{N}}$ of dyadic partitions of $[0,T]$ defined by $\pi_n:=\{t^n_0,t^n_1,\ldots t^n_n\}$ where $t^n_k:=\tfrac{k}{2^n}T$, $k=1,2,\ldots 2^n$ and then
\begin{align}\label{eq:hunt04}
&\{T\le\tilde{\tau}_*\}=\lim_{n\to\infty}\bigcap_{q\in\pi_n}\big\{X_q\in[-b_-(q),b_+(q)]\big\},\\[+4pt]
&\{T\le\tilde{\tau}_-\}=\lim_{n\to\infty}\bigcap_{q\in\pi_n}\big\{X_q\in[-s_-(q),s_+(q)]\big\}.
\end{align}
We set $h_n=t^n_{k+1}-t^n_k=T/2^n$ and denote $p^n_h(x,y)=\tfrac{1}{\sqrt{2\pi h_n}}\exp-\tfrac{1}{2h_n}(x-y)^2$. By using monotone convergence and Chapman-Kolmogorov equation we obtain
\begin{align}\label{eq:hunt05}
&\int_B\PP_x(X_T\in A,\,T\le\tilde{\tau}_-)dx\\
&=\lim_{n\to\infty}\int_B\PP_x(X_q\in[-s_-(q),s_+(q)]\:\text{for all}\:q\in\pi_n,\, X_T\in A)dx\nonumber\\
&=\lim_{n\to\infty}\int p^n_h(x_0,x_1)p^n_h(x_1,x_2)\ldots p^n_h(x_{2^n-1},x_{2^n})dx_0\,dx_1\,\ldots dx_{2^n} \nonumber
\end{align}
where the last integral is taken with respect to $x_0\in B$, $x_{2^n}\in A$ and $x_k\in[-s_-(t^n_k),s_+(t^n_k)]$ for $k=1,2,\ldots 2^n-1$. We interchange order of integration, relabel variables $x_{2^n-k}=y_k$ for $k=0,1,2,\ldots 2^n$ and use symmetry of the heat kernel along with the fact that $s_\pm(q)=b_\pm(T-q)$ to conclude
\begin{align*}
&\int_B\PP_x(X_T\in A,\,T\le\tilde{\tau}_-)dx\\
&=\lim_{n\to\infty}\int p^n_h(y_0,y_1)p^n_h(y_1,y_2)\ldots p^n_h(y_{2^n-1},y_{2^n})dy_0\,dy_1\,\ldots dy_{2^n}\nonumber\\
&=\lim_{n\to\infty}\int_A\PP_x(X_q\in[-b_-(q),b_+(q)]\:\text{for all}\:q\in\pi_n,\, X_T\in B)dx\nonumber\\
&=\int_A\PP_x(X_T\in B,\,T\le\tilde{\tau}_*)dx.\nonumber
\end{align*}
Hence \eqref{eq:hunt00} follows and the generalisation to arbitrary $t<s$ can be obtained with the same arguments.
\end{proof}

\begin{proof}[Proof of \eqref{eq:convCd}] It is sufficient to show that $b^\delta_+(t)\downarrow b_+(t)$ for all $t\in[0,T)$ since the proof for $b_-$ is analogous and the convergence of the related sets easily follows from the same arguments. Note that for each $t$ the limit $b^0_+(t):=\lim_{\delta\to0}b^\delta_+(t)$ exists and $b^0_+(t)\ge b_+(t)$ since $\delta\mapsto b^\delta_+(t)$ decreases as $\delta\to0$ and $b^\delta_+(t)\ge b_+(t)$ for all $\delta>0$. Let us assume that there exists $\bar{t}\in[0,T)$ such that $b^0_+(\bar{t})>b_+(\bar{t})$. Pick $\bar{x}\in (b_+(\bar{t}),b^0_+(\bar{t}))$, then by definition of $b^\delta_+$ it should follow that $\inf_{\delta>0}V^\delta(\bar{t},\bar{x})-G^\delta(\bar{x})\ge \eta>0$ for some $\eta=\eta(\bar{t},\bar{x})$. However this is clearly impossible since $V^\delta(\bar{t},\bar{x})-G^\delta(\bar{x})$ converges to $V(\bar{t},\bar{x})-G(\bar{x})=0$ as $\delta \to 0$ by \eqref{eq:Vdelta01}.
\end{proof}

\begin{proof}[Proof of \eqref{eq:Vdelta01}] We denote $\|\,\cdot\,\|_\infty$ the $L^\infty(\RR)$ norm. By direct comparison we obtain
\begin{align}\label{eq:convVd00}
\big(V^\delta-V\big)(t,x)\le& \sup_{0\le\tau\le T-t}\EE_x2\int_0^{B_\tau}\big(F_\mu-F^\delta_\mu\big)(z)dz\\[+3pt]
=&2\big\|F_\mu-F^\delta_\mu\big\|_\infty\sup_{0\le\tau\le T-t}\EE_x\big|B_\tau\big|\nonumber
\end{align}
and the same bound can be found for $(V-V^\delta)(t,x)$. Then by an application of Jensen inequality and using that $\EE_x (B_\tau)^2=x^2+\EE_0B^2_\tau=x^2+\EE_0\tau$ we get
\begin{align}\label{eq:convVd01}
\big|V^\delta-V\big|(t,x)\le2\big\|F_\mu-F^\delta_\mu\big\|_\infty\sup_{0\le\tau\le T-t}\Big(\EE_x\big|B_\tau\big|^2\Big)^{\tfrac{1}{2}}\le 2(|x|+\sqrt{T})\big\|F_\mu-F^\delta_\mu\big\|_\infty.
\end{align}
The latter goes to zero as $\delta\to0$ by \eqref{eq:Fdelta01}, uniformly for $t\in[0,T]$ and $x$ in a compact.
\end{proof}

\begin{proof}[Proof of \eqref{eq:convTd}] Thanks to \eqref{eq:equivtaus} and \eqref{eq:equivtaus2} it is sufficient to prove that $\tilde \tau^\delta_-\downarrow \tilde \tau_-$ as $\delta\to0$. We denote $\tau_0:=\lim_{\delta\to0}\tilde{\tau}^\delta_-$, $\PP$-a.s.~(the limit exists since the sequence is monotone by \eqref{eq:convCd}). Note that $\tau_0\ge\tilde{\tau}_-$ and let us now prove that the reverse inequality also holds.

Fix $\hat{\omega}\in\Omega$, then if $\tilde{\tau}_-(\hat{\omega})=+\infty$ we immediately obtain $\tau_0(\hat{\omega})=\tilde{\tau}_-(\hat{\omega})$. On the other hand let $\eta_{\hat{\omega}}>0$ be such that $\tilde{\tau}_-(\hat{\omega})<\eta_{\hat{\omega}}$. Then there exists $t\in(\tilde{\tau}_-(\hat{\omega}),\eta_{\hat{\omega}})$ (also depending on $\hat{\omega}$) such that $W^\nu_t(\hat{\omega})\notin[-s_-(t),s_+(t)]$, i.e.~with no loss of generality we may assume that there exists $\eps_{t,\hat{\omega}}>0$ and such that $W^\nu_t(\hat{\omega})>s_+(t)+\eps_{t,\hat{\omega}}$. By \eqref{eq:convCd} it then follows that $W^\nu_t(\hat{\omega})>s^\delta_+(t)$ for all $\delta$ sufficiently small and hence $\tau_0(\hat{\omega})<\eta_{\hat{\omega}}$. Since $\eta_{\hat{\omega}}$ was arbitrary we conclude that $\tau_{0}(\hat{\omega})\le \tilde{\tau}(\hat{\omega})$. Repeating the argument for all $\omega\in\Omega$ the claim is proved. 
\end{proof}

\begin{proof}[Proof of a refined version of Lemmas \ref{lem:Vx-asympt} and \ref{lemma:Vt-meas}] Here we discuss a technicality needed to make the proof of $V^\delta_t\in C([0,T)\times\RR)$ rigorous. In fact we need a refined version of Lemma \ref{lem:Vx-asympt} in order to be able to prove Lemma \ref{lemma:Vt-meas} in the cases $\supp\{\mu\}\cap\RR_+=\emptyset$ or $\supp\{\mu\}\cap\RR_-=\emptyset$. We only give full details for the former case as the latter can be addressed by similar methods. 

Let $\supp\{\mu\}\cap\RR_+=\emptyset$ (hence $b_+\equiv+\infty$), then for any $\delta>0$ one has $\mu^\delta(\RR)<1$ and $\lim_{x\to\infty}(G^\delta)'(x)=g_\delta>0$ for some constant $g_\delta$. Therefore Lemma \ref{lem:Vx-asympt} holds in a different form and in particular we claim that
\begin{align}\label{ext2}
\lim_{y\to\infty}\,\,\sup_{0\le s\le T}\big|V^\delta_x(s,y)-g_\delta\big|=0.
\end{align}
If the above limit holds then one can replace \eqref{ext1} in the final part of the proof of Lemma \ref{lemma:Vt-meas} by
\begin{align*}
\sigma_h([x,+\infty))=-\frac{1}{2h}\int^T_{T-h}(g_\delta-V^\delta_x(s,x))ds, 
\end{align*}
and notice that $\big|\sigma_h([x,+\infty))\big|<\eps/2$ for $x$ sufficiently large. Once this is accomplished the rest of the proof of Lemma \ref{lemma:Vt-meas} follows in the same way and one can then repeat the same steps to prove all the remaining properties of $V^\delta_t$.

It remains to prove \eqref{ext2}. As in \eqref{bb} we obtain
\begin{align*}
\EE_x\left[(G^\delta)'(B_{\tau_*})-g_\delta\right]\le V^\delta_x(t,x)-g_\delta\le\EE_x\left[(G^\delta)'(B_{\tau_*}-)-g_\delta\right]. 
\end{align*}
Moreover for any $\eps>0$ there is $x_\eps>0$ such that $|(G^\delta)'(x)-g_\delta|\le \eps$ for $x\in[x_\eps,+\infty)$ and therefore
\begin{align*}
\EE_x\left[\left|(G^\delta)'(B_{\tau_*})-g_\delta\right|\right]\le c \left(\PP_x(\tau_*<T-t)+\PP_x(B_{T-t}\le x_\eps)\right)+\eps. 
\end{align*}
Taking limits as $x\to\infty$ the right-hand side of the expression above goes to $\eps$. Since the latter is arbitrary \eqref{ext2} follows.
\end{proof}

\begin{proof}[Proof of \eqref{limsk}.] For $k\ge 1$ we denote $\mu_-^{(k)}=\mu_-+1/k$. Notice that $\mu_{k}(dx)=\mu_{k+1}(dx)$ for $x\in[-\mu_-,+\infty)$ whereas $\mu_{k+1}(dx)\ge \mu_k(dx)$ for $x\in[-\mu^{(k+1)}_-,-\mu_-)$ since $F'_{\mu_{k+1}}=(k+1)F_{\mu}(-\mu_-)\ge kF_{\mu}(-\mu_-)=F'_{\mu_k}$ on that interval. On the other hand if we denote by $\tau^*_{k+1}$ the optimal stopping time for the problem with value function $V_{k+1}$, we also observe that $L^z_{\tau_{k+1}}=0$, $\PP_{t,x}$-a.s.~for all $z\le -\mu_-^{(k+1)}$ since $b^{(k+1)}_-(t)\le \mu_-^{(k+1)}$ for all $t\in[0,T]$. It then follows for any $(t,x)$
\begin{align*}
\EE_{t,x}\int_\RR L^z_{\tau_{k+1}}\mu_{k+1}(dz)=&\,\EE_{t,x}\int_{[-\mu_-^{(k+1)},+\infty)} L^z_{\tau_{k+1}}\mu_{k+1}(dz)\\[+4pt]
\ge &\,\EE_{t,x}\int_{[-\mu_-^{(k+1)},+\infty)} L^z_{\tau_{k+1}}\mu_{k}(dz)=\EE_{t,x}\int_{\RR} L^z_{\tau_{k+1}}\mu_{k}(dz).
\end{align*}
Therefore we obtain
\begin{align*}
V_{k+1}(t,x)-G_{k+1}(x)=&\,\EE_{t,x}\int_\RR L^z_{\tau_{k+1}}(\nu-\mu_{k+1})(dz)\\[+4pt]
\le &\,\EE_{t,x}\int_\RR L^z_{\tau_{k+1}}(\nu-\mu_{k})(dz)\le V_{k}(t,x)-G_k(x)
\end{align*} 
for all $(t,x)\in[0,T]\times \RR$. For $U_k:=V_k-G_k$, the sequence $(U_k)_{k\ge 0}$ is decreasing. Hence for $\CC_{k}:=\{(t,x)\,:\,U_k(t,x)>0\}$, $k\ge1$ the corresponding continuation sets, one has $\CC_k\supseteq\CC_{k+1}$ for all $k\ge1$. On the other hand it is easy to verify that by construction 
\begin{align*}
\lim_{k\to\infty}\sup_{x\in\RR}|G_k(x)-G(x)|=0
\end{align*}
and therefore also
\begin{align*}
\lim_{k\to\infty}\sup_{(t,x)\in[0,T]\times\RR}|V_k(t,x)-V(t,x)|=0.
\end{align*}
Now arguing exactly as in the proof of \eqref{eq:convCd} and \eqref{eq:convTd} we can demonstrate that $\CC_k\downarrow \CC_T$ and $\sigma^{(k)}_*\downarrow \sigma_*$ $\PP$-a.s.~as required.
\end{proof}

\ackn{This work was funded by EPSRC grant EP/K00557X/1. I am grateful to G.~Peskir for showing me McConnell's work and for many useful discussions related to Skorokhod embedding problems and optimal stopping. I also wish to thank two anonymous referees whose insightful comments helped me to substantially improve the results in this paper}

\end{document}